\documentclass{article}
\usepackage{amsthm}
\usepackage{amsmath}
\usepackage{amssymb}
\usepackage{hyperref}
\usepackage{xcolor} 
\usepackage{geometry} 
\usepackage{graphicx}
\usepackage{tikz}
\usepackage[english]{babel}
\usepackage{subcaption}
\usepackage{adjustbox}
\usepackage{float}
\usepackage[most]{tcolorbox}
\usepackage{bbm}
\usepackage{caption}
\usepackage{makecell}
\usepackage{xcolor}

\usepackage[normalem]{ulem}
\usepackage{todonotes}

\newtheorem{proposition}{Proposition}
\newtheorem{theorem}{Theorem}
\newtheorem{lemma}{Lemma}
\newtheorem{example}{Example}
\newtheorem{corollary}{Corollary}
\newtheorem{proof*}{Proof}
\newtheorem{definition}{Definition}
\newtheorem{conjecture}{Conjecture}
\title{Fully Leafed Induced Subtrees in Penrose P2 Tilings}
\author{Mathieu Cloutier\footnote{Université du Québec à Trois-Rivières, Trois-Rivières, Québec} \and Alain Goupil\footnote{Université du Québec à Trois-Rivières, Trois-Rivières, Québec} \and Alexandre Blondin Massé\footnote{Université du Québec à Montréal, Montréal, Québec}}
\date{}

\begin{document}

\maketitle

\begin{center}
    \textbf{Abstract}\\
\medskip
    \begin{minipage}{0.9\textwidth}
\noindent
    In a recent article by C. Porrier, A. Blondin Massé and A. Goupil, a first bi-infinite fully leafed induced subcaterpillar of Penrose P2 tilings is presented. In this paper, we formally construct this caterpillar for the first time. We then prove that every fully leafed induced subtree in Penrose P2 tilings is a caterpillar with at most one appendix of at most two internal tiles, and we characterize fully leafed induced subtrees that have the property of saturation. We also refute the conjecture that there is a unique bi-infinite fully leafed induced subcaterpillar by constructing a new one. Finally, we present progress on the construction of all bi-infinite fully leafed induced subcaterpillars in Penrose P2 tilings.\\
    \end{minipage}
\end{center}

\begin{flushleft}
\textbf{Keywords:} Aperiodic tilings, Penrose P2, HBS, kite, dart, graph theory, fully leafed induced subtrees, bi-infinite caterpillars
\end{flushleft}

\section{Introduction}

The mathematical study of aperiodic tilings is relatively recent \cite{grunbaum1987tilings}. The first such tiling was discovered by Robert Berger in 1966, who required 20,426 distinct types of Wang tiles for his construction. Mathematicians naturally sought to reduce this number, and in 1973, Roger Penrose introduced a method to tile the plane aperiodically using only six different types of tiles. Penrose later succeeded in reducing the number to two, with the Penrose P2 tilings (or the kite and dart tilings) (see Figure \ref{fig:pavage}). Today, Penrose P2 tilings are used to model the atomic structure of quasicrystals\footnote{Further details can be found in \cite{madison2013symmetry}.}.

Given a tiling, one may seek the induced subtrees (i.e. connected and acyclic subgraphs) of $n$ tiles that maximize the number of tiles of degree 1 (leaves). These subtrees are called fully leafed induced subtrees. They are used in chemistry and physics to model the adsorption process, that is, how particles accumulate on a surface, in situations where the exposed boundary of the deposit is maximized. The structure of fully leafed induced subtrees has been studied for classical tilings, such as the square and the cubic tilings \cite{blondin2018fully}. In this paper, we investigate the same question in the context of Penrose P2 tilings. The results we provide here can be used to study adsorption on quasicrystal surfaces \cite{mcgrath2010surface}.

Section 2 introduces the formal definitions required for this work. In Section 3, we revisit \cite{porrier2023leaf} and provide a rigorous proof of the existence of a bi-infinite fully leafed induced subcaterpillar in the Penrose P2 tilings. Section 4 focuses on the structure of fully leafed induced subtrees in P2 tilings. As conjectured in \cite{porrier2023leaf}, we prove that every such subtree is a caterpillar with at most one appendix of at most two internal tiles. We also examine in this section the notion of saturated fully leafed induced subtrees, originally introduced in \cite{masse2018saturated} for other types of tilings and for graphs. Section 5 presents a collection of results that may help in identifying bi-infinite fully leafed induced subcaterpillars in Penrose P2 tilings. In Section 6, we disprove the conjecture that the bi-infinite caterpillar described in \cite{porrier2023leaf} is the unique bi-infinite fully leafed induced subcaterpillar of Penrose P2 tilings. Finally, Section 7 summarizes the main results of this paper and outlines some related open questions.

\section{Definitions}
We present here the definitions and terminology that will be used throughout this paper.
\subsection{Graph theory}
We first recall some definitions from graph theory, which can be found in \cite{diestel2025graph}.
A connected graph without cycles is called a \textbf{tree}. A \textbf{leaf} of a tree $T$ is a vertex of $T$ of degree one, and a vertex of degree 2 or more is an \textbf{internal vertex}. For a graph $G=(V,E)$ with vertex set $V$ and edge set $E$, and a set of vertices $U \subset V$, let $\mathcal{P}_2(U)$ denote the set of two-element subsets of $U$. The graph $G[U] = (U, E \cap \mathcal{P}_2(U))$ is called the \textbf{subgraph of $G$ induced by $U$}. An \textbf{induced subtree} is an induced subgraph that is a tree. The next two definitions were first presented in \cite{blondin2018fully}.

\begin{definition}[\cite{blondin2018fully}]
    A \textbf{fully leafed induced subtree} of order $n$ in a graph $G$ is an induced subtree of $G$ that maximizes the number of leaves among all induced subtrees of order $n$ in $G$.
\end{definition}

\begin{definition}[\cite{blondin2018fully}]
    The \textbf{leaf function} of a graph $G$, denoted $L_G$, is the function that assigns to each positive integer $n$ the number of leaves of a fully leafed induced subtree of order $n$ in $G$.
\end{definition}

Let $G=(V,E)$ be a graph with vertex set $V$ and edge set $E$. Define $V' = V \setminus \{v \in V : v \text{ has degree } 1\}$ and $E' = E \setminus \{\{x,y\} \in E : x \text{ or } y \text{ has degree } 1\}$. The graph $G' = (V',E')$ is called the \textbf{derived graph of $G$}. If a tree $C$ is such that all vertices of $C'$ have degree at most two (i.e., $C'$ is a path), then $C$ is called a \textbf{caterpillar}.

\subsection{Penrose P2 tilings}
The following definitions on tilings are inspired by those found in \cite{grunbaum1987tilings}. A \textbf{tiling of the plane} is a covering of $\mathbb{R}^2$ by a countable family $T$ of closed subsets of $\mathbb{R}^2$ such that for all pairs $(\mathcal{T}_i,\mathcal{T}_j)\in T^2$ with $\mathcal{T}_i\neq \mathcal{T}_j$, we have $\overset{\circ}{\mathcal{T}_i}\cap\overset{\circ}{\mathcal{T}_j}=\varnothing$. The subsets $\mathcal{T}_i$ are called \textbf{tiles}. We will use the term \textbf{patch} to refer to a connected set of tiles. Two patches $P_1$ and $P_2$ are \textbf{adjacent} if $P_1\cup P_2$ is a patch. For $P$ a patch, the \textbf{kingdom} of $P$ is the largest patch containing $P$ that is contained in every tiling containing $P$.

The tilings studied in this paper are the Penrose P2 tilings, also known as \textit{kites and darts tilings}, or more simply \textit{P2 tilings}. Penrose P2 tilings are constructed from two prototiles (i.e., tile types), illustrated in Figure \ref{fig:prototuiles}. P2 tilings are aperiodic and there exists an uncountable infinity of P2 tilings. P2 tilings satisfy the local isomorphism property, meaning that every local pattern occurring in one P2 tiling occurs in all P2 tilings. For brevity, the tiles used to construct a P2 tiling will often be referred to as P2 tiles.

\begin{figure}[H]
    \centering
    \begin{subfigure}{0.15\textwidth}
        \centering
        \includegraphics[width=\textwidth]{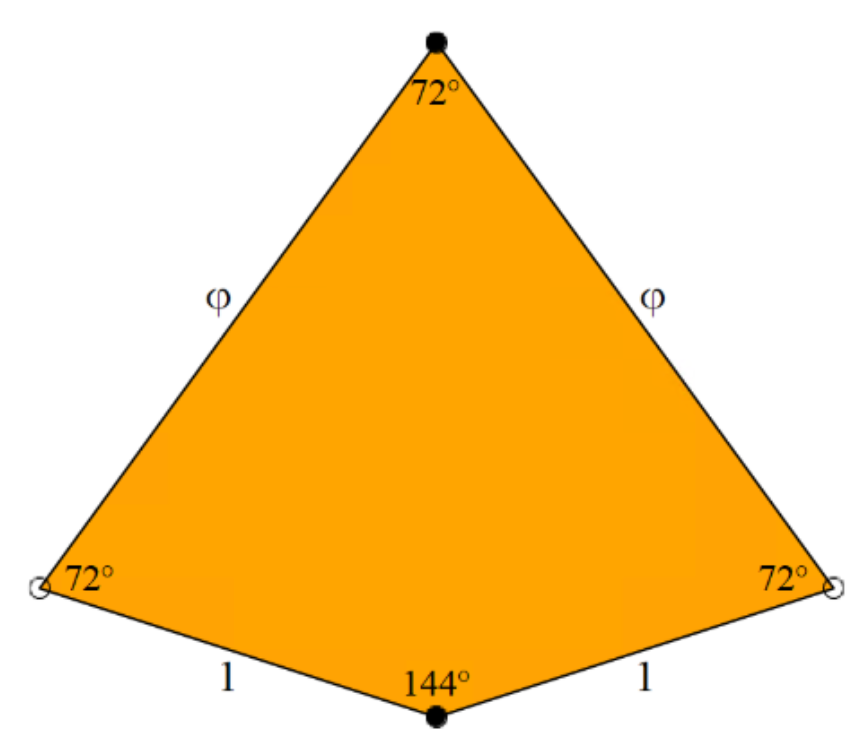}
        \caption{Kite}
    \end{subfigure} 
    \hspace{2cm}
    \begin{subfigure}{0.15\textwidth}
        \centering
        \includegraphics[width=\textwidth]{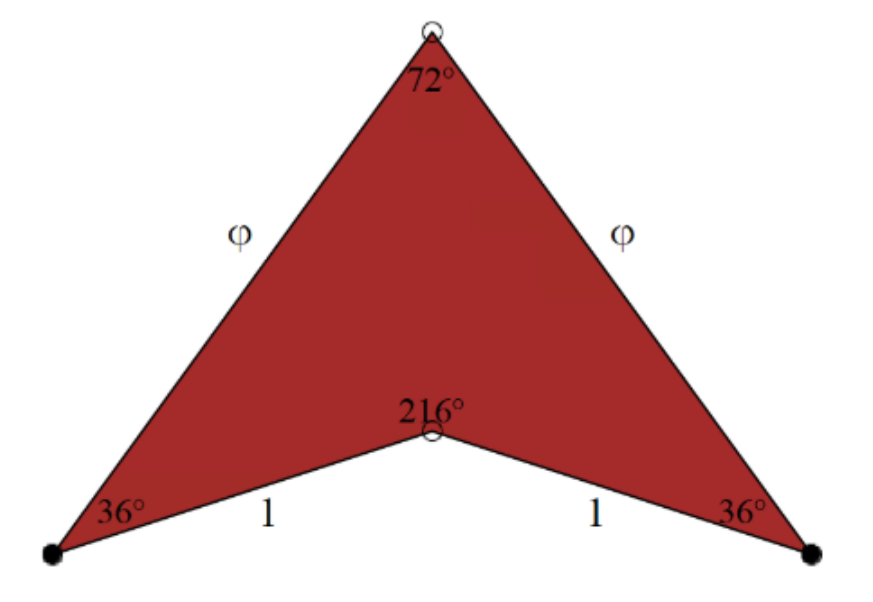}
        \caption{Dart}
    \end{subfigure}
    \caption{Prototiles of P2 tilings. Tiles may meet only when filled (resp. empty) vertices coincide. $\varphi = \frac{1+\sqrt{5}}{2}$ is the golden ratio, giving the ratio of long to short edges in both prototiles.}
    \label{fig:prototuiles}
\end{figure}

From the construction rules of P2 tile patches, one can enumerate the patches forming a minimal neighborhood of a vertex. These patches are shown in Figure \ref{fig:collages de Conway}.

\begin{figure}[H]
    \centering
    \begin{subfigure}{0.1\textwidth}
        \centering
        \includegraphics[width=\textwidth]{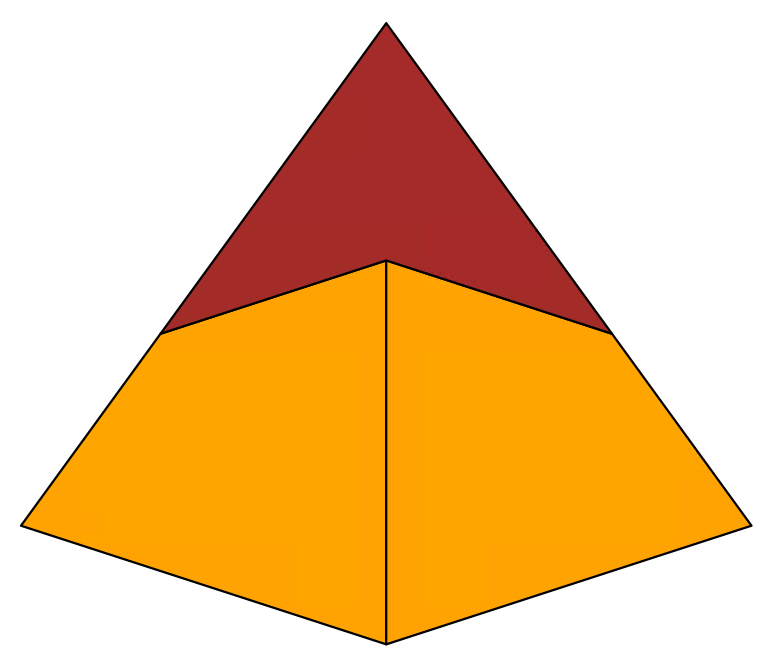}
        \caption{Ace}
    \end{subfigure} 
    \hspace{.4 cm}
    \begin{subfigure}{0.1\textwidth}
        \centering
        \includegraphics[width=\textwidth]{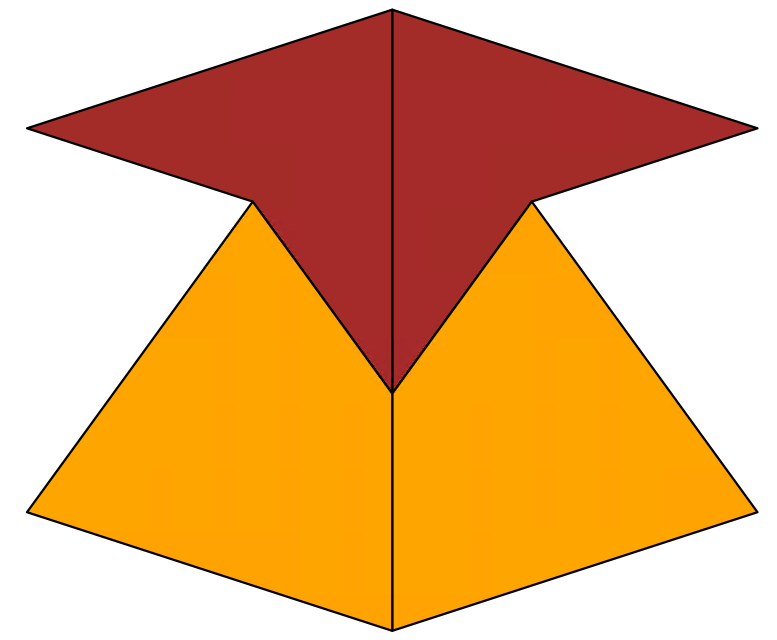}
        \caption{Deuce}
    \end{subfigure}
    \hspace{.4 cm}
    \begin{subfigure}{0.1\textwidth}
        \centering
        \includegraphics[width=\textwidth]{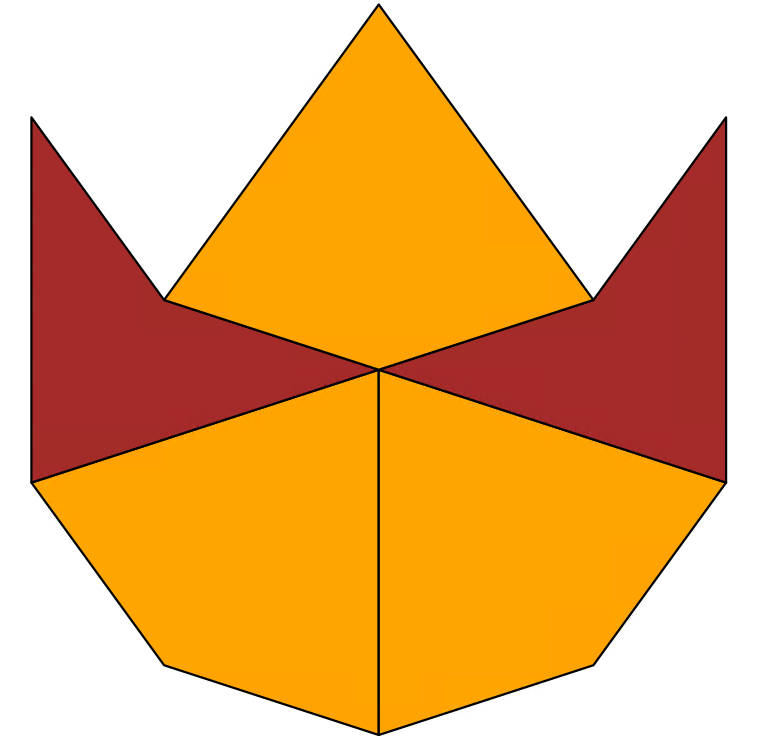}
        \caption{Jack}
    \end{subfigure}
    \hspace{.4 cm}
    \begin{subfigure}{0.1\textwidth}
        \centering
        \includegraphics[width=\textwidth]{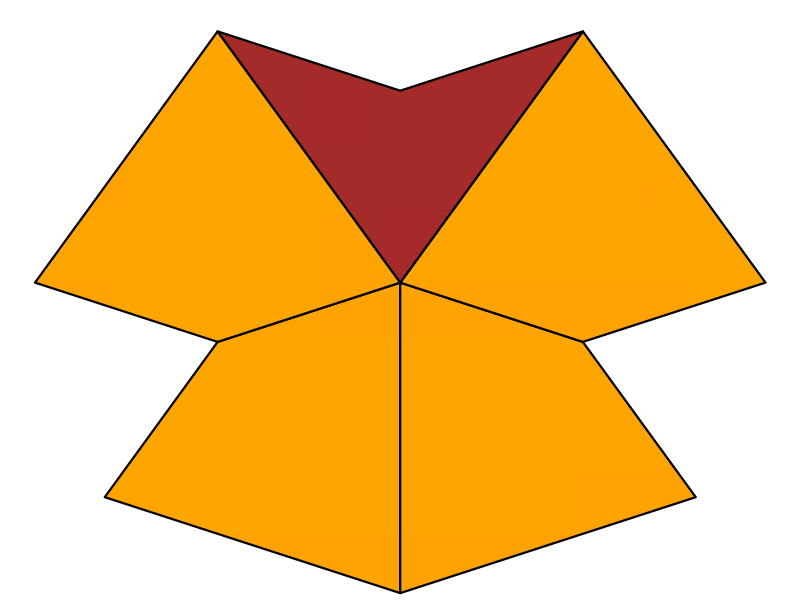}
        \caption{Queen}
    \end{subfigure}
    \hspace{.4 cm}
    \begin{subfigure}{0.1\textwidth}
        \centering
        \includegraphics[width=\textwidth]{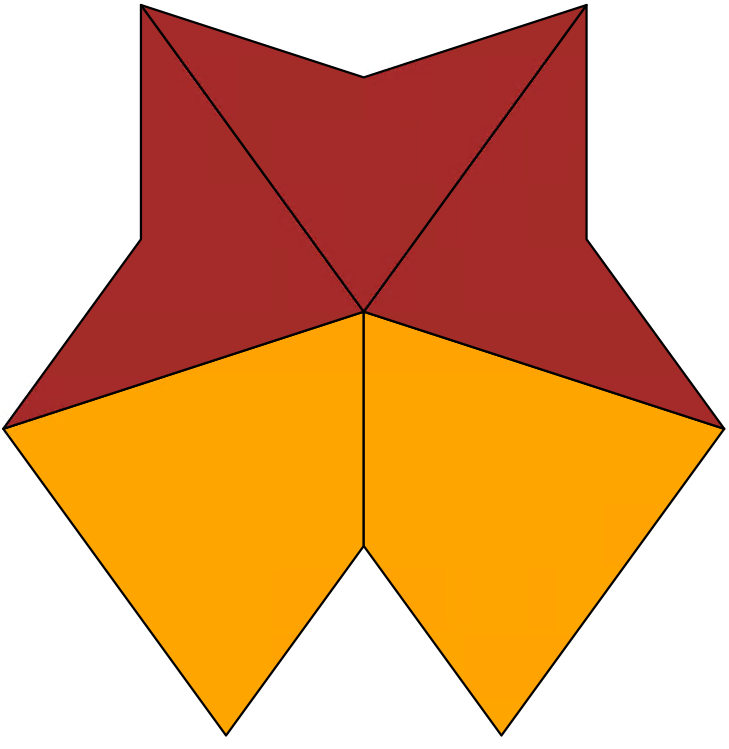}
        \caption{King}
        \label{king}
    \end{subfigure}
    \hspace{.4 cm}
    \begin{subfigure}{0.1\textwidth}
        \centering
        \includegraphics[width=\textwidth]{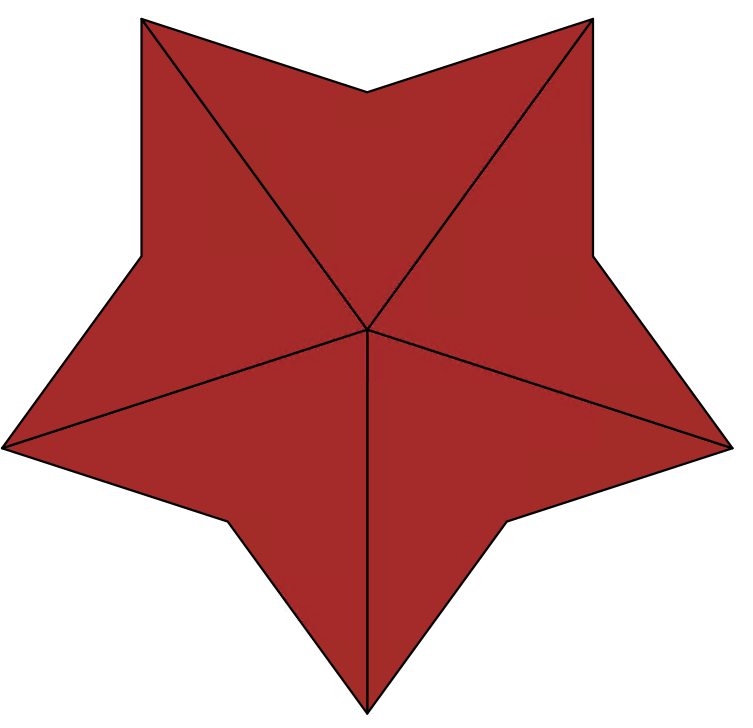}
        \caption{Star}
        \label{star}
    \end{subfigure}
    \hspace{.4 cm}
    \begin{subfigure}{0.1\textwidth}
        \centering
        \includegraphics[width=\textwidth]{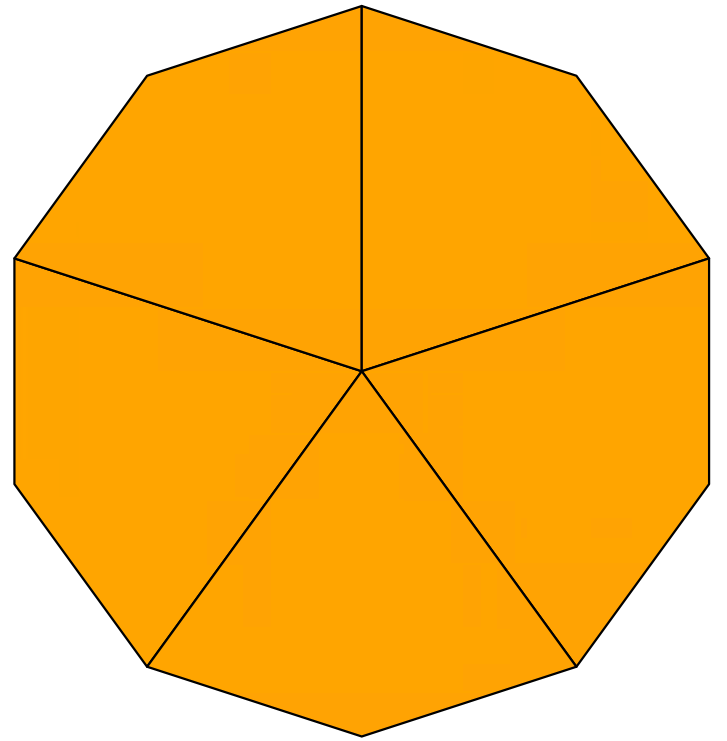}
        \caption{Sun}
        \label{sun}
    \end{subfigure}
    \caption{The seven minimal vertex neighborhoods in P2 tilings up to isometry}
    \label{fig:collages de Conway}
\end{figure}

Starting from  a kite (resp. a dart), outlined by a blue dashed contour in Figure \ref{fig:inflation cerf-volant} (resp. \ref{fig:inflation fléchette}), the \textbf{inflation} of this kite (resp. dart) is the patch of four (resp. three) smaller P2 tiles shown in brown and orange in Figure \ref{fig:inflation cerf-volant} (resp. \ref{fig:inflation fléchette}). The tiles in the inflation are $\varphi$ times smaller than the original tile.

\begin{figure}[H]
    \centering
    \begin{subfigure}{0.25\textwidth}
        \centering
        \includegraphics[width=0.6\textwidth]{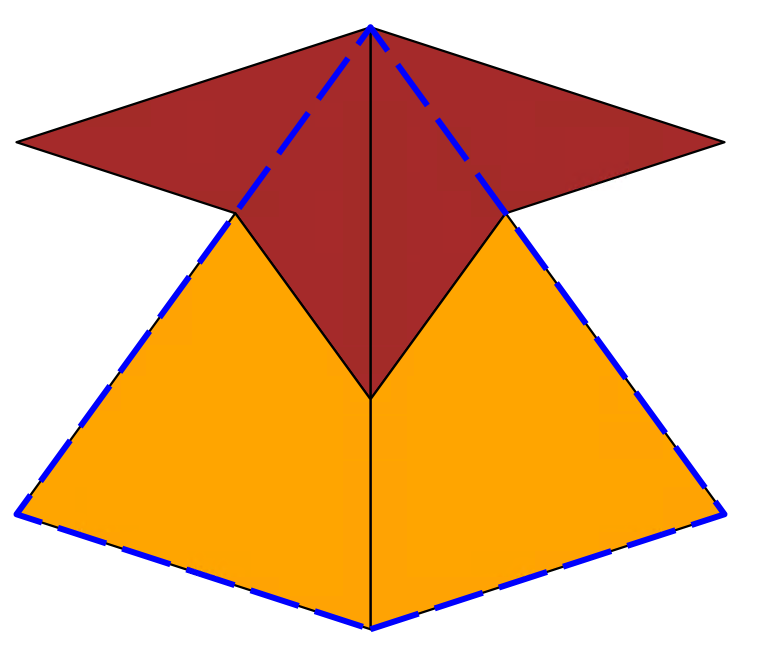}
        \caption{Inflation of a kite}
        \label{fig:inflation cerf-volant}
    \end{subfigure}
    \begin{subfigure}{0.25\textwidth}
        \centering
        \includegraphics[width=0.6\textwidth]{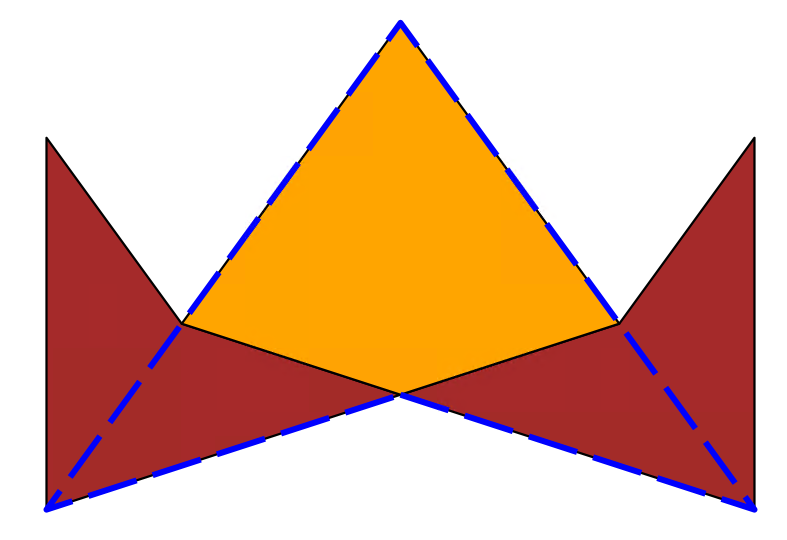}
        \caption{Inflation of a dart}
        \label{fig:inflation fléchette}
    \end{subfigure}
    \caption{Inflation of P2 tiles}
\end{figure}

Then, the \textbf{inflation} of a patch $P$ of P2 tiles is the patch obtained by inflating every tile of $P$. By iterating inflation infinitely many times and by enlarging the tiles at each iteration, we can then obtain P2 tilings.

The vertices and edges of a P2 tiling form an infinite graph whose vertices and edges correspond to those of the tiles themselves. In what follows, we focus on the dual of this graph, which we call a \textbf{P2-graph}. In a P2-graph, the terms \textit{vertex} and \textit{tile} are used interchangeably. We shall study the fully leafed induced subtrees of P2-graphs.

\begin{figure}[H]
    \centering
    \includegraphics[width=0.5\linewidth]{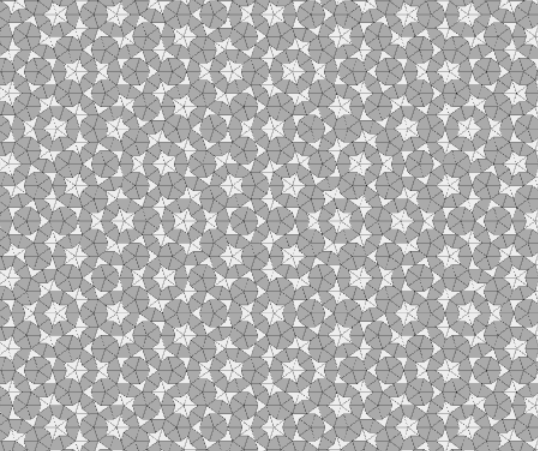}
    \caption{A region of a Penrose P2 tiling. The kites are shown in grey and the darts in white.}
    \label{fig:pavage}
\end{figure}

From this point on, since the object of study is clear, for brevity, we may use the terms \textit{tree} and \textit{caterpillar} without specifying each time that it is an induced subgraph of a P2-graph.

\section{Formal construction of a bi-infinite fully leafed caterpillar}

The main goal of this section is to formalize the construction of a bi-infinite fully leafed caterpillar of P2 tilings, that was first presented in \cite{porrier2023leaf}. First, we need the following results from \cite{porrier2023leaf} and we refer the reader to that article for the proofs.

\begin{lemma}[\cite{porrier2019leaf}]\label{max 3}
    The degree of each tile of an induced subtree of a P2-graph is at most $3$.
\end{lemma}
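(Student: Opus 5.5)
The plan is to argue locally around a single tile. Let $T$ be an induced subtree of a P2-graph and $t$ a tile of $T$. Every neighbour of $t$ in $T$ shares an edge with $t$. Since both kites and darts are quadrilaterals, $t$ has exactly four edge-neighbours in the tiling. The degree of $t$ in $T$ is therefore at most $4$, and it suffices to rule out degree $4$.

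So suppose $t$ has all four of its edge-neighbours in $T$. I will show that two of them are adjacent, i.e.\ share an edge. In that case the induced subgraph $T$ contains a triangle, contradicting acyclicity.

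The key observation concerns the vertices of $t$. Consider any vertex $v$ of $t$ at which the two edges of $t$ incident to $v$ are the boundaries with two of its neighbours, say $u$ and $w$. If $v$ lies in no other tile besides $t$, $u$ and $w$, then $u$ and $w$ share an edge emanating from $v$, so $\{t,u,w\}$ is a triangle. The minimal vertex neighbourhoods of Figure~\ref{fig:collages de Conway} show which vertices of a kite or dart can be surrounded by exactly three tiles.

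I will then do a case analysis on the type of $t$.

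\textbf{Dart.} The reflex (concave) vertex of a dart has interior angle $216^\circ$. Only $144^\circ$ remains around it, and in every vertex configuration (the Ace, Deuce, Jack, Queen, King, Star and Sun patches) that remaining angle is filled by exactly two tiles, namely two kites (the Ace configuration). Hence the two darts-neighbours along the two short edges meeting at the reflex vertex share an edge, giving a triangle.

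\textbf{Kite.} Checking the seven configurations, a kite always has a vertex surrounded by exactly three tiles. Its $72^\circ$ apex, for instance, sits in the Ace, whose centre consists of a dart and two kites. I would verify, by scanning the seven patches, that every kite appears in at least one vertex of valence three, i.e.\ an Ace centre. This holds because the Ace is the only valence-three vertex and each kite's obtuse-angle... Rather than reason about angles, a cleaner route is the following. The angles of a kite are $72^\circ, 72^\circ, 72^\circ, 144^\circ$. The $144^\circ$ vertex leaves $216^\circ$, which by the matching rules must be filled by a single dart's reflex angle (the Ace). Therefore the two neighbours of the kite across its two short edges are adjacent.

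The main obstacle is justifying from the matching rules that the $144^\circ$ kite vertex and the reflex dart vertex always occur in an Ace, with no alternative filling. I would settle this directly by enumerating the angle sums compatible with the vertex markings of Figure~\ref{fig:prototuiles}, cross-checked against the list in Figure~\ref{fig:collages de Conway}. With that established, degree $4$ is impossible and the degree is at most $3$.
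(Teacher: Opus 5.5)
Your overall strategy is sound: if every tile has a vertex of valence $3$, the two neighbours meeting there share an edge and form a triangle with the tile. Your dart case is also correct, since the reflex angle occurs only in the ace, where the two kites across the dart's short edges share a long edge.

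\textbf{The kite case rests on a false claim.} The $144^\circ$ tail of a kite is never filled by a dart's reflex angle. A kite and a dart glued along both short edges form a rhombus, which is exactly what the vertex markings of Figure~\ref{fig:prototuiles} forbid: the kite's tail and the dart's reflex vertex carry different markings. In the ace, the two kites touch the dart's notch with their $72^\circ$ \emph{side} vertices (between a long and a short edge). They do not touch it with their tails, nor with their apexes; the apex lies in a sun or a jack. The tail lies in a deuce (two kite tails, two dart wings) or a jack (one kite tail, two kite apexes, two dart wings), of valence $4$ or $5$. So no triangle can be extracted at the tail. The enumeration you call the main obstacle would refute your claim rather than establish it.

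Scanning the seven patches does not fix this either. Kite side vertices also occur in queens and kings. Since aces are in bijection with darts and kites outnumber darts by the factor $\varphi$, only a fraction $1/\varphi$ of kite side vertices are ace centres. You must therefore show that each individual kite has at least one side vertex at an ace.

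\textbf{How to repair it.} Work from the tail of the kite $k$.
\begin{enumerate}
\item In a deuce or a jack, at least one short edge of $k$ is shared with a dart whose wing sits at the tail of $k$.
\item A dart's short edge joins a wing to its reflex vertex, so that dart's reflex vertex sits at a side vertex $s$ of $k$.
\item The reflex angle occurs only in the ace, so $s$ has valence $3$.
\item Then $k$, this dart, and the kite sharing the long edge of $k$ at $s$ are pairwise adjacent, giving the triangle.
\end{enumerate}
With this replacement your argument proves the lemma. The paper itself gives no proof of this lemma; it only cites the earlier reference.
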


\begin{lemma}[\cite{porrier2023leaf}] \label{lem2}
    Let $I$ be an induced subtree of a P2-graph. If $n_1$ and $n_3$ denote respectively the number of leaves and the number of tiles of degree $3$ in $I$, then $n_1 = n_3 + 2$. 
\end{lemma}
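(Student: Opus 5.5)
We prove Lemma~\ref{lem2} by double counting the edges of $I$, using Lemma~\ref{max 3} to restrict which vertex degrees can occur. Let $I$ be a (finite) induced subtree with $n$ tiles, and let $n_k$ be the number of tiles of degree $k$ in $I$. By Lemma~\ref{max 3}, every tile of $I$ has degree $1$, $2$ or $3$, provided $I$ has at least two tiles; a single tile is a degenerate case, handled by convention or excluded. Hence
\[
n = n_1 + n_2 + n_3 .
\]

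Since $I$ is a tree, it has exactly $n-1$ edges. The handshake lemma then gives
\[
n_1 + 2n_2 + 3n_3 = 2(n-1) = 2n_1 + 2n_2 + 2n_3 - 2 .
\]
Subtracting $n_1 + 2n_2 + 2n_3$ from both sides leaves $n_3 = n_1 - 2$, that is, $n_1 = n_3 + 2$, as claimed.

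Nothing in this argument is genuinely difficult. The only substantive geometric input is the degree bound of Lemma~\ref{max 3}; without it, vertices of degree $4$ or more would add terms $(k-2)n_k$ to the identity. Two points need care:
\begin{itemize}
\item The lemma is meant for finite trees, since infinite trees have no well-defined counts. For trees with one tile, $n_1 = 0$ and $n_3 = 0$, so the statement should assume $n \ge 2$.
\item For $n = 2$ we get $n_1 = 2$ and $n_3 = 0$, which is consistent with the formula.
\end{itemize}
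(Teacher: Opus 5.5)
Your proof is correct and follows the standard route. The paper gives no proof of Lemma~\ref{lem2} itself and imports it from the cited work, where it rests on the same handshake count $n_1+2n_2+3n_3=2(n-1)$ for a tree together with the degree bound of Lemma~\ref{max 3}. Your caveat is also right: a one-tile tree has $n_1=n_3=0$, so the statement implicitly assumes a finite $I$ with at least two tiles, which is consistent with $L_{P2}(1)=0$.
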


\begin{corollary} \label{cor 1}
    $I$ is a fully leafed induced subtree of a P2-graph $G_{P2}$ if and only if $I$ is an induced subtree of $G_{P2}$ that maximizes the number of vertices of degree $3$ among all induced subtrees of $G_{P2}$ of the same order as $I$.
\end{corollary}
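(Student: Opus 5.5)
The plan is to rewrite the number of leaves of an induced subtree purely in terms of the number of degree-$3$ tiles, using Lemma \ref{lem2}, and then compare two trees of the same order. Fix an order $n$ and let $\mathcal{I}_n$ be the set of induced subtrees of $G_{P2}$ with $n$ tiles. For $J \in \mathcal{I}_n$, write $n_1(J)$ and $n_3(J)$ for its numbers of leaves and of tiles of degree $3$. Lemma \ref{lem2} gives $n_1(J) = n_3(J) + 2$ for every $J \in \mathcal{I}_n$. So the map $J \mapsto n_1(J)$ is the map $J \mapsto n_3(J)$ shifted by the constant $2$, and the shift does not depend on $J$.

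Both implications follow from this identity. Suppose $I$ is fully leafed of order $n$. Then $n_1(I) \ge n_1(J)$ for all $J \in \mathcal{I}_n$, so
\[
n_3(I) = n_1(I) - 2 \ge n_1(J) - 2 = n_3(J).
\]
Hence $I$ maximizes the number of degree-$3$ tiles in $\mathcal{I}_n$. Conversely, if $I \in \mathcal{I}_n$ maximizes $n_3$, then adding $2$ to both sides of $n_3(I) \ge n_3(J)$ gives $n_1(I) \ge n_1(J)$ for all $J \in \mathcal{I}_n$, so $I$ is fully leafed.

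The proof is essentially immediate, but two small points need attention.

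\begin{itemize}
\item Lemma \ref{lem2} presumably uses Lemma \ref{max 3}, which says no tile has degree above $3$. It also needs the tree to have at least two tiles, since a single tile has no leaves and $0 \neq 0+2$. I would assume $n \ge 2$, or note that for $n = 1$ the set $\mathcal{I}_1$ consists of single tiles. All of these have the same leaf count and the same number of degree-$3$ tiles, so both properties hold trivially.
\item Finiteness matters: $\mathcal{I}_n$ is infinite as a set of subgraphs, but $n_1$ and $n_3$ are bounded by $n$. So the maxima exist and comparing maximizers makes sense.
\end{itemize}
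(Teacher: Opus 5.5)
Your proposal is correct and is essentially the paper's argument: Lemma~\ref{lem2} shows that, for every induced subtree of a fixed order $n \ge 2$, the leaf count equals the number of degree-$3$ tiles plus the constant $2$, so the two maximization problems have the same maximizers. You also treat order $1$ separately, where $n_1 = n_3 + 2$ fails but both properties hold trivially; the paper's one-line proof leaves this case implicit.
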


\begin{proof}
    This follows directly from Lemma \ref{lem2}.
\end{proof}

\begin{lemma}[\cite{porrier2023leaf}] \label{lemme 3}
    If $I$ is an induced subtree of a P2-graph in which all internal vertices have degree $3$ in $I$, then $I$ is a caterpillar.
\end{lemma}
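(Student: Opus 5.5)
The plan is to argue by contradiction on the derived tree $I'$. Suppose $I$ is an induced subtree in which every internal tile has degree exactly $3$, and suppose $I$ is not a caterpillar. Then $I'$ has a tile $t$ of degree at least $3$ in $I'$. Since $t$ has degree at most $3$ in $I$ by Lemma \ref{max 3}, $t$ has exactly three neighbours $a,b,c$ in $I$, and all three are internal tiles of $I$. By hypothesis each of $a,b,c$ then has degree exactly $3$ in $I$. So $I$ contains the configuration consisting of $t$, its three neighbours, and for each neighbour two further tiles, with these ten tiles inducing a tree (the "spider" of depth two). The goal is to show that this configuration cannot occur as an induced subtree of a P2-graph.

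To do so I would first classify the possible tiles $t$ having three pairwise non-adjacent neighbours. A kite or a dart has four edges, so choosing three neighbours means omitting exactly one side. Two neighbours of $t$ that share a vertex of $t$ must be non-adjacent in the tiling, and this is strongly constrained by the seven vertex neighbourhoods of Figure \ref{fig:collages de Conway}. I expect this to leave only a short list of local patches: the omitted side, the type of $t$, and the vertex stars at the corners of $t$. This is essentially the analysis underlying Lemma \ref{max 3} in \cite{porrier2019leaf}, and I would reuse it. For each surviving patch, I would then examine each neighbour $a$ (and similarly $b$, $c$). The tile $a$ needs two further neighbours outside $\{t\}$, each non-adjacent to $t$ and to the other chosen tiles. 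Using the kingdom of the patch (the tiles forced by the local rules), this should show that at least one of $a,b,c$ cannot reach degree $3$ without creating a cycle or an extra adjacency.

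The main obstacle is the case analysis itself: the finite but sizeable enumeration of patches around $t$ and its three neighbours. My first approach would be to use symmetry, namely the kite's mirror axis and the rigid dart geometry, to reduce the number of cases. I would also exploit the fact that the constrained neighbours sit at the short sides, where vertex stars such as Ace, Deuce or Sun force specific adjacent tiles. If the hand analysis becomes unwieldy, I would fall back on a finite computer check. By the local isomorphism property, every candidate patch of bounded radius around $t$ appears in some fixed sufficiently large inflated patch, so enumerating all ten-tile spiders in such a patch proves the claim.
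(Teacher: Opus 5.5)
Your reduction is correct. It is in fact an equivalence: the ten-tile spider is itself an induced subtree whose internal tiles all have degree $3$ and which is not a caterpillar, so the lemma says precisely that no spider exists. (The paper gives no proof of this lemma; it is quoted from \cite{porrier2023leaf}.)

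The gap is that everything after the reduction is a plan rather than an argument. The sentences ``I expect this to leave only a short list'' and ``this should show that at least one of $a,b,c$ cannot reach degree $3$'' carry the entire content of the statement, and neither is carried out. The computer fallback does not repair this as stated. The local isomorphism property says every bounded patch occurs somewhere, but not inside which explicit finite patch, so you would need an explicit bound from the inflation hierarchy. It is simpler to search over all configurations permitted by the vertex atlas: these include every genuine configuration, so they suffice for a non-existence claim.

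In fact the missing step needs only one vertex star. By Figure~\ref{fig:collages de Conway}, a dart's concave vertex is always an Ace, so its two short-edge neighbours are adjacent. A kite's obtuse vertex is a Deuce or a Jack, where darts occur only at their $36^\circ$ corners. Hence a dart sharing a short edge with a kite puts its concave vertex at a side vertex of the kite, which is then an Ace. At a Deuce or a Jack every kite shares a short edge with at least one dart, so every kite has an Ace side vertex. A kite with two Ace side vertices has degree at most $2$, since its four neighbours split into two adjacent pairs.

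Now take a tile $t$ of degree $3$. It must omit a neighbour at an Ace vertex, so it contains both neighbours across its two edges at a vertex $V$: the tip of a dart, or the non-Ace side vertex of a kite. In both cases $V$ is a Star, Queen or King. So exactly five tiles meet at $V$, each at a dart tip or at a non-Ace kite side vertex. By the same reasoning, every tile at $V$ that has degree $3$ in $I$ contains both of its neighbours around $V$.

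So if $t$ and its two neighbours $x,y$ around $V$ all had degree $3$, all five tiles around $V$ would lie in $I$ and form a $5$-cycle. Hence $x$ or $y$ has degree at most $2$, which under your hypothesis means it is a leaf. Therefore no internal tile has three internal neighbours, and $I'$ is a path. This is exactly the ``one of $a,b,c$ cannot reach degree $3$'' you anticipated, but it has to be proved, and it needs neither kingdoms nor any tiles beyond the star of $V$.
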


\begin{corollary} \label{cor 2}
    If $I$ is an induced subtree of a P2-graph in which all internal tiles have degree $3$, then $I$ is a fully leafed caterpillar.
\end{corollary}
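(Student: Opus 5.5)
Corollary \ref{cor 2} has two parts: $I$ is a caterpillar, and $I$ is fully leafed. The first part is exactly Lemma \ref{lemme 3}, since all internal tiles of $I$ have degree $3$. For the second part we use Corollary \ref{cor 1}. It suffices to show that no induced subtree of the same order $n$ as $I$ has more tiles of degree $3$ than $I$. Equivalently, by Lemma \ref{lem2}, no such subtree has more leaves.

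We bound the number of degree-$3$ tiles in an arbitrary induced subtree $J$ of order $n$. By Lemma \ref{max 3}, every tile of $J$ has degree $1$, $2$ or $3$; the case $n \le 2$ is trivial. Write $m_1,m_2,m_3$ for the numbers of tiles of each degree in $J$. The handshake identity for a tree gives
\begin{equation*}
m_1+2m_2+3m_3 = 2(n-1),
\end{equation*}
and the vertex count gives $m_1+m_2+m_3=n$. Subtracting twice the second equation from the first gives $m_3-m_1=-2$, which recovers Lemma \ref{lem2}. Substituting $m_1=m_3+2$ into the vertex count yields
\begin{equation*}
m_2+2m_3 = n-2, \qquad\text{so}\qquad m_3 = \frac{n-2-m_2}{2} \le \frac{n-2}{2}.
\end{equation*}

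For $I$ we have $m_2=0$ by hypothesis, so $I$ attains this upper bound: $m_3(I)=(n-2)/2 \ge m_3(J)$ for every induced subtree $J$ of order $n$. Corollary \ref{cor 1} then shows that $I$ is fully leafed. Combined with Lemma \ref{lemme 3}, this proves $I$ is a fully leafed caterpillar.

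The argument has no real obstacle. The only point needing care is that the degree bound of Lemma \ref{max 3} is what makes the counting valid: it rules out tiles of degree $4$ or more, so the bound $m_3 \le (n-2)/2$ holds for all competing subtrees $J$ and not just for $I$.
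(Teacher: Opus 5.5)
Your proof is correct and follows the same route as the paper, which simply cites Lemma~\ref{lemme 3} for the caterpillar part and Corollary~\ref{cor 1} for the fully leafed part. Your handshake computation $m_2+2m_3=n-2$ makes explicit the step the paper leaves implicit: $m_2=0$ forces the maximum possible value $m_3=(n-2)/2$.
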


\begin{proof}
    This follows directly from Corollary \ref{cor 1} and Lemma \ref{lemme 3}.
\end{proof}

More precisely, we have from \cite{porrier2023leaf} the following result.

\begin{lemma}[\cite{porrier2023leaf}] \label{lemme 4}
    If $I$ is a fully leafed induced subtree of a P2-graph with $8$ internal tiles or less, then $I$ is one of the caterpillars in Figure~\ref{fig:chenilles premières} or a proper subcaterpillar of one of them.
\end{lemma}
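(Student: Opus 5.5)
This lemma is quoted from \cite{porrier2023leaf}, and its content is essentially a finite computer-assisted classification. I would reproduce it by exhaustive enumeration of local patches, using the results stated above to prune the search.

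\textbf{Setup.} By Corollary~\ref{cor 1}, a fully leafed induced subtree of order $n$ maximizes the number $n_3$ of degree-3 tiles among induced subtrees of order $n$. By Lemma~\ref{max 3}, every internal tile has degree $2$ or $3$. Removing all leaves of such a tree $I$ gives its derived tree $I'$, whose tiles are exactly the internal tiles of $I$ (at most $8$ of them). Conversely, $I$ is recovered from $I'$ together with a choice of pendant neighbours of each tile of $I'$. These neighbours must be chosen so that the whole tile set induces a tree in the P2-graph:
\begin{itemize}
\item no added leaf is adjacent to another added leaf;
\item no added leaf is adjacent to a second tile of $I'$;
\item no two tiles of $I'$ are adjacent unless they are adjacent in $I'$.
\end{itemize}

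\textbf{Enumeration.} By the local isomorphism property, every patch of bounded radius that occurs in one P2 tiling occurs in all of them. So I would first list all legal patches of radius about $9$ around a tile, for instance by iterating inflation from the seven vertex neighbourhoods of Figure~\ref{fig:collages de Conway} until the radius is covered. In these patches I would enumerate every induced subtree with $k\le 8$ internal tiles, building them by adding internal tiles one at a time. For each $k$ I would record the maximum achievable $n_3$ and the corresponding order $n$.

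\textbf{Pruning.} A partial configuration can be discarded as soon as it cannot reach the best value of $n_3$ for its order. The key comparison is the following. A tile of degree $2$ in $I'$ that is forced to have degree $2$ in $I$ costs one leaf relative to the all-degree-$3$ configurations. Corollary~\ref{cor 2} and Lemma~\ref{lemme 3} show that those all-degree-$3$ configurations are already caterpillars, and the pruning step compares against them. The list in Figure~\ref{fig:chenilles premières} would then be the set of maximal survivors. Finally, I would check that every other fully leafed tree with at most $8$ internal tiles is obtained from a survivor by deleting internal tiles at the ends of its spine. The leaf count is unchanged by taking such a subcaterpillar, which justifies the phrase ``or a proper subcaterpillar''.

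\textbf{Main obstacle.} The hard part is the completeness of the enumeration: I must be sure that every local configuration of tiles actually occurring in P2 tilings has been generated, and that none that fails to occur has been included. I would first try to settle this by generating patches from high-order inflations of a sun and a star. This covers all vertex stars, and hence, by local isomorphism, all patches of the needed radius. I would then confirm that the radius-$9$ neighbourhoods of all tile types appear in the generated set.
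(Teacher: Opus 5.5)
The paper gives no proof of this lemma. It is quoted from \cite{porrier2023leaf}, so there is nothing to compare with, and a finite search over legal patches is the natural route. As planned, however, your search covers only half of what the statement asserts. By Lemmas~\ref{max 3} and~\ref{lem2}, $n=n_1+n_2+n_3=2n_1-2+n_2$, so $L_{P2}(n)\le\lfloor n/2\rfloor+1$ for every $n$. Trimming a prime caterpillar attains this bound for $2\le n\le 18$, and $k\le 8$ internal tiles force $n=k+n_3+2\le 18$. Hence a tree with at most $8$ internal tiles is fully leafed if and only if $n_2\le 1$: $n_2=0$ for even order and $n_2=1$ for odd order. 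This identity also lets you decide fully leafedness without generating same-order trees with more than $8$ internal tiles (long induced paths, for instance), which a search indexed by $k\le 8$ never sees.

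Your pipeline only treats $n_2=0$. The pruning rule discards every configuration with a forced degree-$2$ tile, yet for odd $n\le 17$ these are exactly the fully leafed trees; every induced path on three tiles is already one. Trimming the ends of the survivors only recovers those whose degree-$2$ tile sits at an end of the spine. Trimming also does change the leaf count; what it preserves is $n_1=\lfloor n/2\rfloor+1$.

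The omitted case is where the lemma has real content, because a degree-$2$ tile hanging off an interior spine tile creates a branch in $I'$. Suppose an appendix $A$ with a single internal tile $a$ of degree $3$ can be grafted onto a prime caterpillar $C$ at the leaf $t$ of an interior spine tile $v_i$ (the situation of Lemma~\ref{app 2}). Consider the subtree of $C\diamond_t A$ induced by $v_{i-1},v_i,v_{i+1}$, all their neighbours, and the tiles of $A$. It has $5$ internal tiles, of which only $t$ has degree $2$, so it is fully leafed of order $11$ by the criterion above. Yet $v_i$ has three internal neighbours, so the tree is not a caterpillar. Your search must therefore enumerate the trees with $k\le 8$ and $n_2=1$ explicitly and rule out such branchings; depending on the outcome, the statement may need to be restricted to even orders.

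Finally, completeness of your patch list does not follow from local isomorphism plus the fact that all vertex stars occur. What you need is the composition property: every P2 tiling is, up to scaling, the $j$-fold inflation of a P2 tiling. Once level-$j$ tiles are large enough, this places every patch of radius $9$ inside the $j$-fold inflation of one of the seven vertex neighbourhoods, so enumerating those seven finite patches is provably exhaustive.
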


Let us call any of the caterpillars in Figure \ref{fig:chenilles premières} a \textbf{prime caterpillar} and label them $PC_1$ to $PC_6$ according to the order in which they appear in Figure \ref{fig:chenilles premières}. Figure \ref{fig:chenilles premières} and all the other figures of this paper use the same color code. Brown tiles are internal tiles of degree $3$, dark green tiles are leaves, and light-green tiles (with dotted outlines) indicate potential leaves, of which only one may be chosen when two of them are overlapping or adjacent. White tiles belong to the tiling but they are not part of the subtree nor of any of its extensions as a subtree.

\begin{figure}[H]
    \centering
    \begin{subfigure}{0.16\textwidth}
        \centering
        \includegraphics[width=\textwidth]{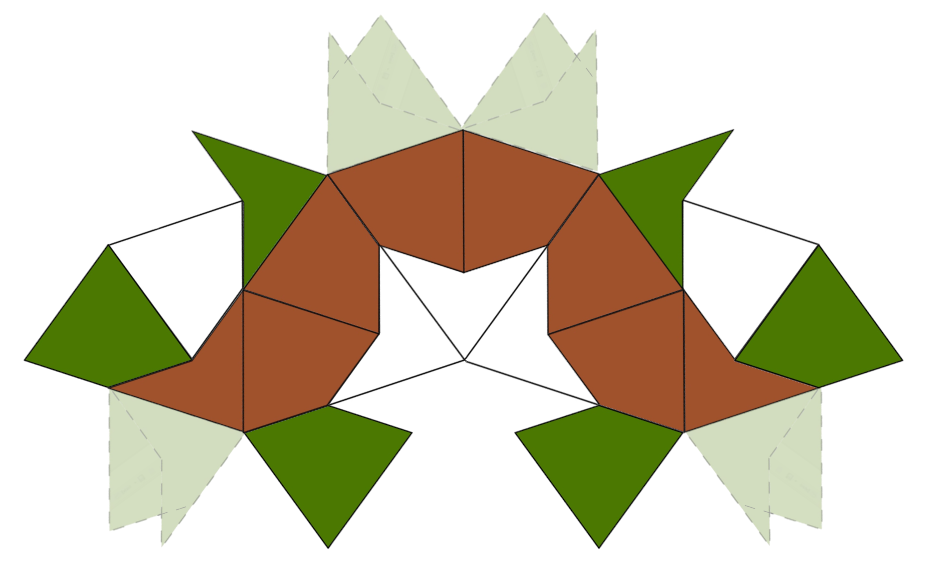}
        \caption{$PC_1$}
        \label{fig:CP1}
    \end{subfigure} \hfill
    \begin{subfigure}{0.15\textwidth}
    \includegraphics[width=\textwidth, trim=0cm 0.4cm 0cm 0cm, clip]{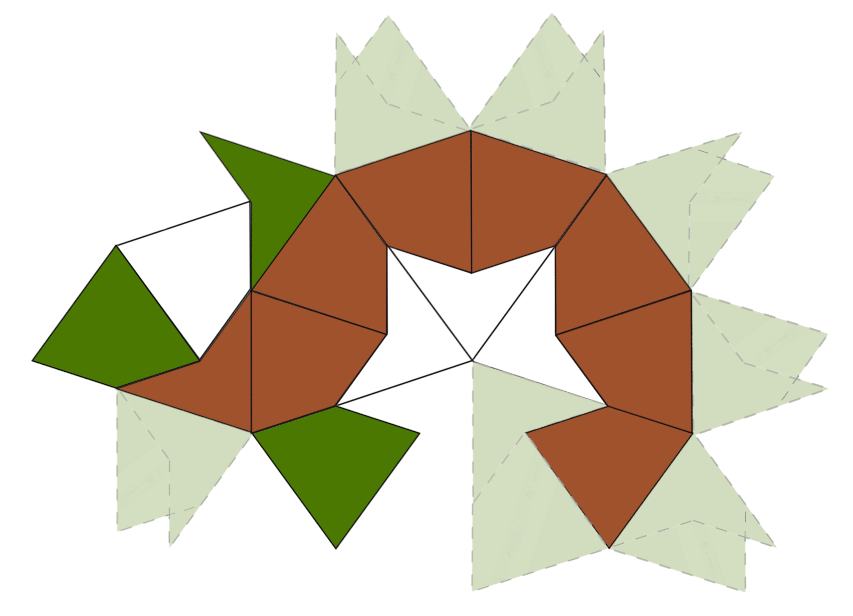}
    \caption{$PC_2$}
    \label{fig:CP2}
    \end{subfigure} \hfill
    \begin{subfigure}{0.16\textwidth}
    \includegraphics[width=\textwidth, trim=0cm 0cm 0cm 0cm, clip]{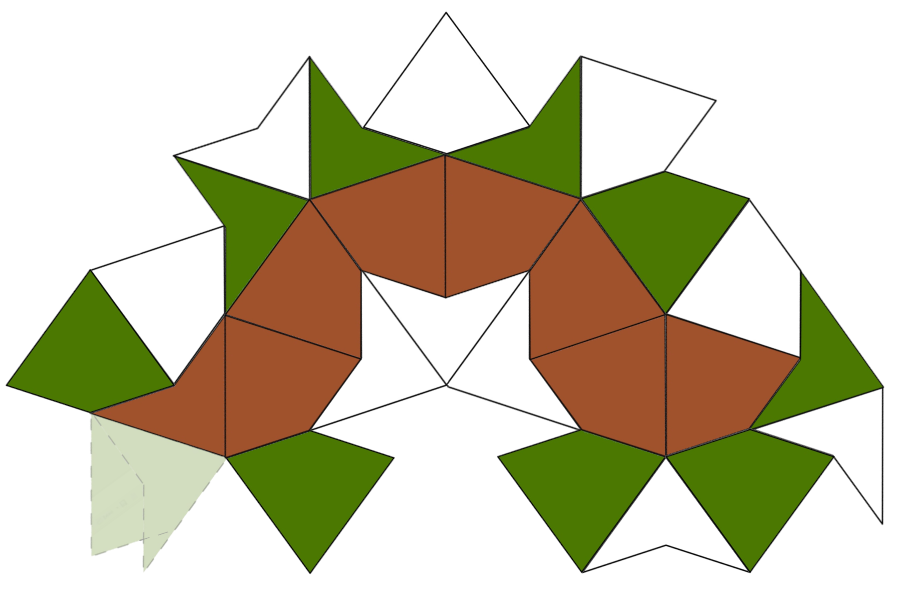}
    \caption{$PC_3$}
    \label{fig:CP3}
    \end{subfigure} \hfill
    \begin{subfigure}{0.15\textwidth}
    \includegraphics[width=\textwidth, trim=0cm 0.5cm 0cm 0cm, clip]{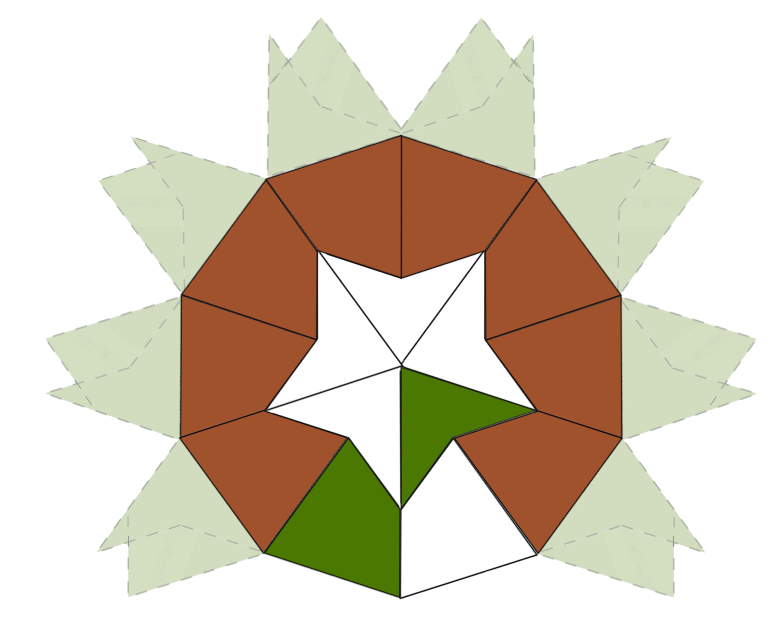}
    \caption{$PC_4$}
    \label{fig:CP4}
    \end{subfigure} \hfill
    \begin{subfigure}{0.16\textwidth}
    \includegraphics[width=\textwidth]{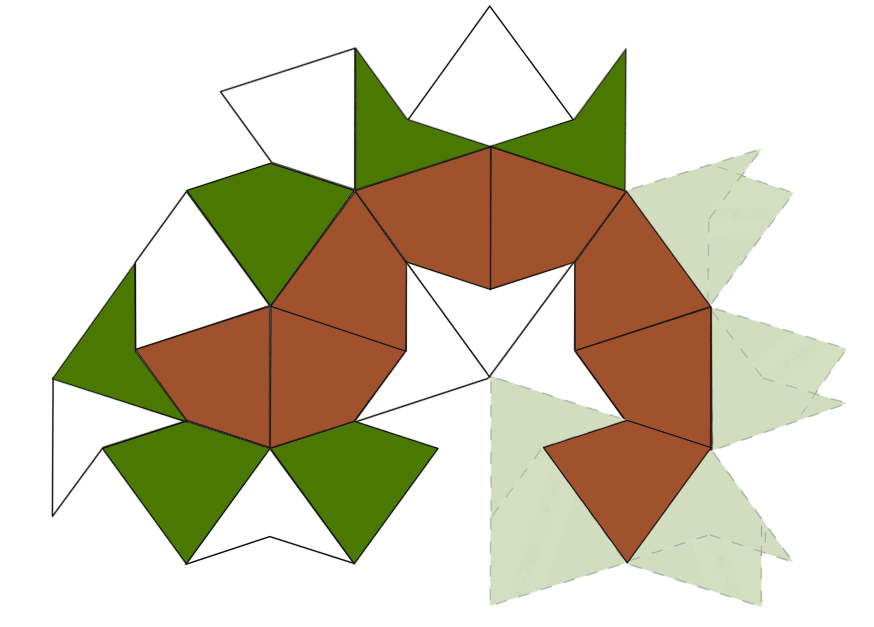}
    \caption{$PC_5$}
    \label{fig:CP5}
    \end{subfigure} \hfill
    \begin{subfigure}{0.16\textwidth}
    \includegraphics[width=\textwidth, trim=0.5cm 0cm 0cm 0cm, clip]{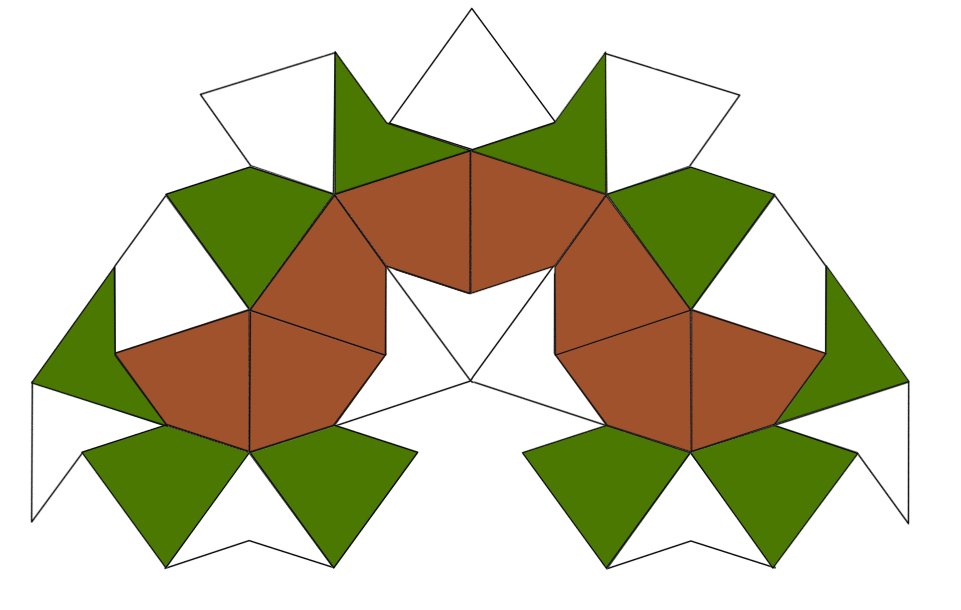}
    \caption{$PC_6$}
    \label{fig:CP6}
    \end{subfigure} \hfill
    \caption{The six possible prime caterpillars, up to isometry. A prime caterpillar is only determined by its derived path. In other words, there are six different prime caterpillars, up to the choice of leaves.}
    \label{fig:chenilles premières}
\end{figure}

From Figure \ref{fig:chenilles premières}, one observes that every prime caterpillar is adjacent to a patch of three darts, in white, in the center of the caterpillars. By checking the seven minimal neighborhoods of a vertex in the P2 tiling in Figure \ref{fig:collages de Conway}, it follows that each prime caterpillar is adjacent to a P2 star (Figure \ref{star}). That will be relevant later.

By examining all possible cases, one finds that it is impossible to extend any prime caterpillar so as to obtain a fully leafed induced subtree with $9$ internal tiles of degree $3$ and none of degree $2$. Consequently, any fully leafed induced subtree of a P2-graph with more than $8$ internal tiles must contain at least one tile of degree $2$. Figure~\ref{fig:a fully leafed caterpillar with 17 internal vertices} illustrates such a case. In this paper, degree-2 tiles will always be in blue.

\begin{figure}[H]
    \centering
    \includegraphics[width=0.4\textwidth]{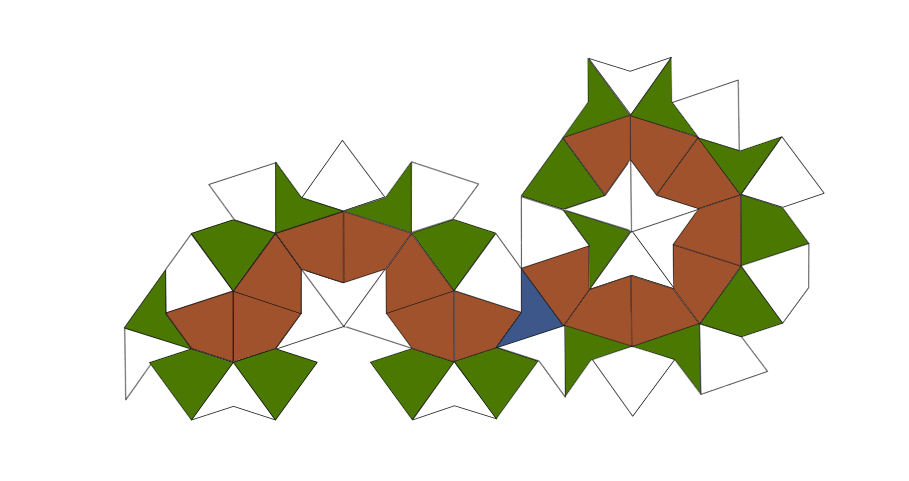}
    \caption{A fully leafed induced subtree of a $P2$-graph with 17 internal vertices. This subtree extends the prime caterpillar $PC_6$ depicted in Figure \ref{fig:CP6}. One leaf of this caterpillar has become a degree-2 tile (in blue), which allowed the addition of eight further internal degree-3 tiles.}
    \label{fig:a fully leafed caterpillar with 17 internal vertices}
\end{figure}

The following definition first appeared in  \cite{masse2018saturated}. However, we adopt a modified version found in \cite{porrier2023leaf}.

\begin{definition}[\cite{masse2018saturated} and \cite{porrier2023leaf}]
Let $I$ be a fully leafed induced subtree of a P2-graph and let $I_1$ and $I_2$ be fully leafed induced subtrees of the same P2-graph such that $I_1 \cup I_2 = I$ and $I_1 \cap I_2 = t$, where $t$ is a leaf of both $I_1$ and $I_2$. We say that $I$ is a \textbf{grafting} of $I_1$ and $I_2$ at $t$, or that $I_1$ is grafted onto $I_2$ at $t$ (and vice versa), and we denote this by $I=I_1 \diamond_t I_2$. When it is not important to identify the tile $t$ or when this tile is already identified in a figure, we simply write $I=I_1 \diamond I_2$.

\end{definition}

If we have $I_1 \diamond I_2$, $I_2 \diamond I_3, \ldots, I_{n-2} \diamond I_{n-1}$, and $I_{n-1} \diamond I_n$, we denote this sequence of graftings by $I_1 \diamond I_2 \diamond \ldots \diamond I_n=\Diamond_{i=1}^n I_i$.

\begin{definition}
    A fully leafed caterpillar is \textbf{bi-infinite} if its construction requires infinitely many graftings of finite fully leafed caterpillars and if it extends infinitely in both directions.
\end{definition}

Now that we have introduced the basic concepts underlying the construction of fully leafed induced subtrees of a P2-graph, we can proceed to the formal construction of a bi-infinite fully leafed caterpillar.

The bi-infinite caterpillar  presented next was first introduced in \cite{porrier2023leaf}. Here we  prove that this construction corresponds to a bi-infinite caterpillar and we characterize it by a bi-infinite word.

\begin{definition}
    A \textbf{Porrier caterpillar} is defined as any of the four caterpillars of Figure \ref{fig:Chenilles de Porrier}, obtained as the grafting of  seven prime caterpillars.
\end{definition}

In Figure \ref{fig:Chenilles de Porrier}, marks have been added to facilitate the recognition of a Porrier caterpillar: a purple chain connects the vertices that are in the center of the stars adjacent to the prime caterpillars. The center of these stars is marked with a red, green, or blue vertex depending on whether the star is adjacent respectively to 0, 1, or 2 suns. This color code will be used throughout this paper. Reading the vertex colors along the purple chain from left to right, there are four possible such chains labeled A, B, C, and D. The chain A is described by the color sequence green-red-blue-green-blue-red-green, the chain B corresponds to the color sequence blue-red-blue-green-blue-red-green, the chain C to the color sequence green-red-blue-green-blue-red-blue and the chain D to the color sequence blue-red-blue-green-blue-red-blue.

\begin{theorem} \label{thm 1}
In any P2-graph, there exists a bi-infinite fully leafed caterpillar.
\end{theorem}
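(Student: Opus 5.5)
We will build the caterpillar as an infinite grafting of Porrier caterpillars, using inflation to show that the pattern recurs. The key observation is that the chains A, B, C, D are words over the star-colour alphabet \{red, green, blue\}. Each colour is determined by how many suns are adjacent to the star at a node of the purple chain. Two Porrier caterpillars can be grafted exactly when the terminal prime caterpillar of one and the initial prime caterpillar of the next share a leaf. This leaf-sharing situation is what occurs when the last star of one chain coincides with the first star of the next. Hence the letters green or blue at the extremities are the ones allowing a junction.

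First, we will check on the figure of the Porrier caterpillars that each of the four caterpillars is an induced subtree all of whose internal tiles have degree $3$. Every grafting of seven prime caterpillars, and every further grafting of such objects along a shared leaf, remains an induced subtree with this property provided that no two tiles of distinct pieces are adjacent. By Corollary \ref{cor 2}, any finite such grafting is then a fully leafed caterpillar. So the whole problem reduces to producing, in every P2 tiling, an infinite sequence of Porrier caterpillars $(Q_i)_{i\in\mathbb{Z}}$ with $Q_i \diamond Q_{i+1}$ for all $i$ and no extra adjacencies between non-consecutive pieces. Fully leafedness of the limit then follows because each finite window is fully leafed.

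Second, we will find the chains inside the tiling via inflation. The stars at consecutive chain nodes sit in a configuration that is itself the inflation (twice or three times) of a specific patch; a sun and a star are classical forced patches whose kingdoms are large. We will show that the patch carrying a Porrier caterpillar of type A, B, C or D maps under a fixed number of inflations into a patch containing two or more consecutive Porrier caterpillars joined at a terminal star. We will deduce from this the substitution rule
\[A\mapsto \cdots,\quad B\mapsto\cdots,\quad C\mapsto\cdots,\quad D\mapsto\cdots\]
on the four letters. Iterating this rule gives a bi-infinite word over \{A,B,C,D\} that encodes the caterpillar. Since the substitution expands in both directions (the seed sits centrally in its image), the iterates converge to a bi-infinite grafting. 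By the local isomorphism property, the seed patch occurs in every P2 tiling, and the construction can be carried out in every P2-graph.

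The main obstacle is the inductive verification that the iterated inflation actually produces a path of stars linked as required. Specifically, one must check that consecutive Porrier caterpillars meet in a single shared leaf and that non-consecutive pieces, as well as the leaves of neighbouring pieces, are never adjacent, so that the union stays induced and acyclic. To handle this we will use kingdoms: we will show that the patch supporting two grafted Porrier caterpillars lies in the kingdom of a bounded patch. The junction can then be checked once, finitely, on each of the finitely many letter pairs occurring in the substitution word. Inflation preserves adjacency structure at the coarser scale, so this finite check propagates to all scales.
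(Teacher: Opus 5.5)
Your strategy is the same as the paper's. You inflate the kingdom of each of the four Porrier caterpillars three times to find seven grafted Porrier caterpillars; this is the morphism $\Phi$ with $\Phi(A)=ADADADA$, $\Phi(B)=DCADADA$, $\Phi(C)=ADADABD$, $\Phi(D)=DCADABD$. You then check once that the two possible junctions reproduce under inflation, and pass to the limit.

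The genuine gap is in how you certify that the finite windows are fully leafed. A Porrier caterpillar is a grafting of seven prime caterpillars, and the tile shared at each grafting has degree $2$ in the union. So a Porrier caterpillar has six tiles of degree $2$. More generally, no fully leafed subtree with more than $8$ internal tiles has all its internal tiles of degree $3$. Your first step (``each of the four caterpillars is an induced subtree all of whose internal tiles have degree $3$'', preserved under further graftings) is therefore false, and Corollary~\ref{cor 2} applies to none of your windows.

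Being fully leafed is a global optimality condition, compared with every induced subtree of the same order in the whole P2-graph, so you need a global upper bound instead. A grafting of $m$ prime caterpillars has $8m$ tiles of degree $3$ and $m-1$ of degree $2$. Hence it has $8m+2$ leaves by Lemma~\ref{lem2} and order $17m+1$, and the recursion for $L_{P2}$ from \cite{porrier2023leaf} gives exactly $L_{P2}(17m+1)=8m+2$. Equivalently, an induced subtree with $k$ internal tiles must contain at least $\lfloor k/9\rfloor$ tiles of degree $2$, since nine connected internal tiles of degree $3$ cannot occur, and Lemma~\ref{lem2} turns this into the leaf bound. With this in place of Corollary~\ref{cor 2}, the rest of your reduction goes through.

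There are also some smaller points. First, consecutive Porrier caterpillars do not share a terminal star. Every prime caterpillar has its own star, and a junction is a star-graph edge between two distinct stars; in $ADADADA$, for instance, a green end meets a blue start. Second, only $D$ sits at the centre of its own image ($B$ and $C$ do not even occur in theirs). So you should seed with $D$, or argue as the paper does that every $\Phi^n(X)$ has $\Phi^{n-1}(D)$ in its middle.

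Finally, your last step (local isomorphism followed by a limit ``in every P2-graph'') is as brief as the paper's, but it needs more than is written. Local isomorphism transfers finite patches only. It gives, in every tiling, finite caterpillars made of $7^n$ Porrier caterpillars for each $n$, and compactness then yields a bi-infinite one in \emph{some} P2 tiling. To run the nested construction inside a prescribed tiling, the Porrier caterpillar you hold at each level $3n$ of that tiling's supertile hierarchy must be one of the seven children of a Porrier caterpillar at level $3(n+1)$. That is not automatic. Porrier caterpillars are $\varphi^6\approx 17.9$ times denser at level $3n$ than at level $3(n+1)$, while each has only seven children, so most of them have no parent.
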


\begin{proof}
Consider each of the four Porrier caterpillars depicted in Figure~\ref{fig:Chenilles de Porrier}. For each Porrier caterpillar, we examine the kingdom that it determines and apply three successive inflations to this kingdom. Within this construction, we find a fully leafed caterpillar obtained by grafting seven Porrier caterpillars. The resulting caterpillars for each of the cases are illustrated in Figure~\ref{fig:3 inf de chaînes}.

By grafting two Porrier caterpillars to form a single caterpillar and applying successive inflations, one can construct a caterpillar composed of 14 grafted Porrier caterpillars, as illustrated in Figure~\ref{fig:greffages par inflation}. In Figure \ref{fig:greffages par inflation}, the black chain represents the two initial Porrier caterpillars before inflations (ignoring the extremities where grafting does not occur), while the purple and orange chains indicate the chains formed of seven Porrier caterpillars each. The yellow segments highlight the grafting region between two initial Porrier caterpillars (before applying inflations) and the light-green segments highlight the grafting region of two caterpillars identified by purple and orange chains.

\begin{figure}[H]
    \centering
    \begin{subfigure}{0.34\textwidth}
        \centering
        \includegraphics[width=\textwidth, trim=0cm 0.4cm 0cm 0.7cm, clip]{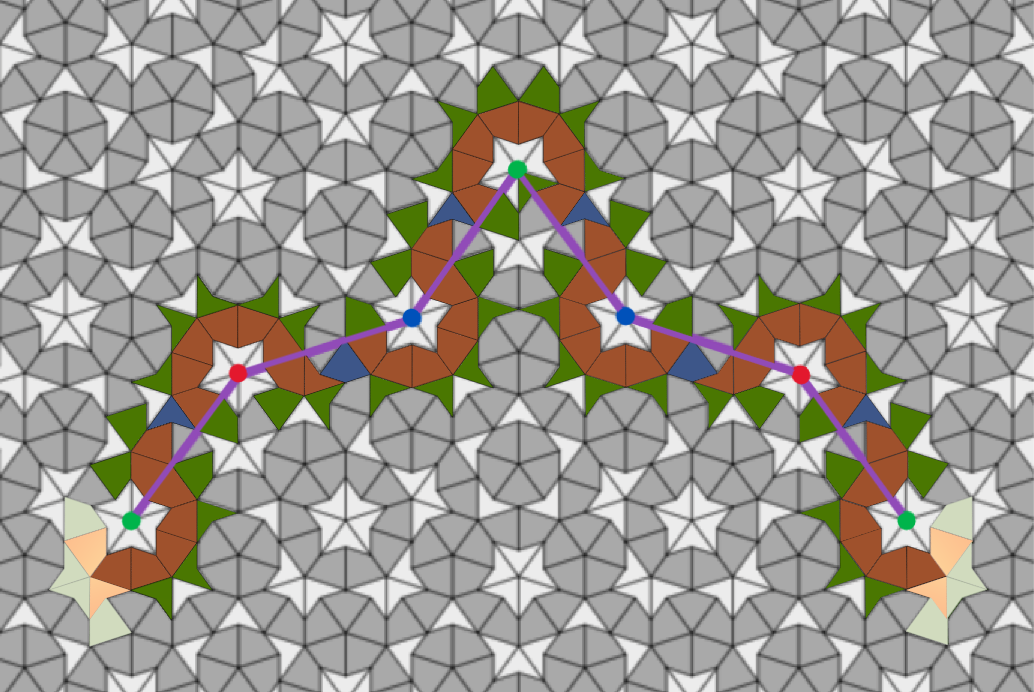}
        \caption{On chain A}
    \end{subfigure} 
    \begin{subfigure}{0.34\textwidth}
        \centering
        \includegraphics[width=\textwidth, trim=0cm 1.1cm 0cm 0cm, clip]{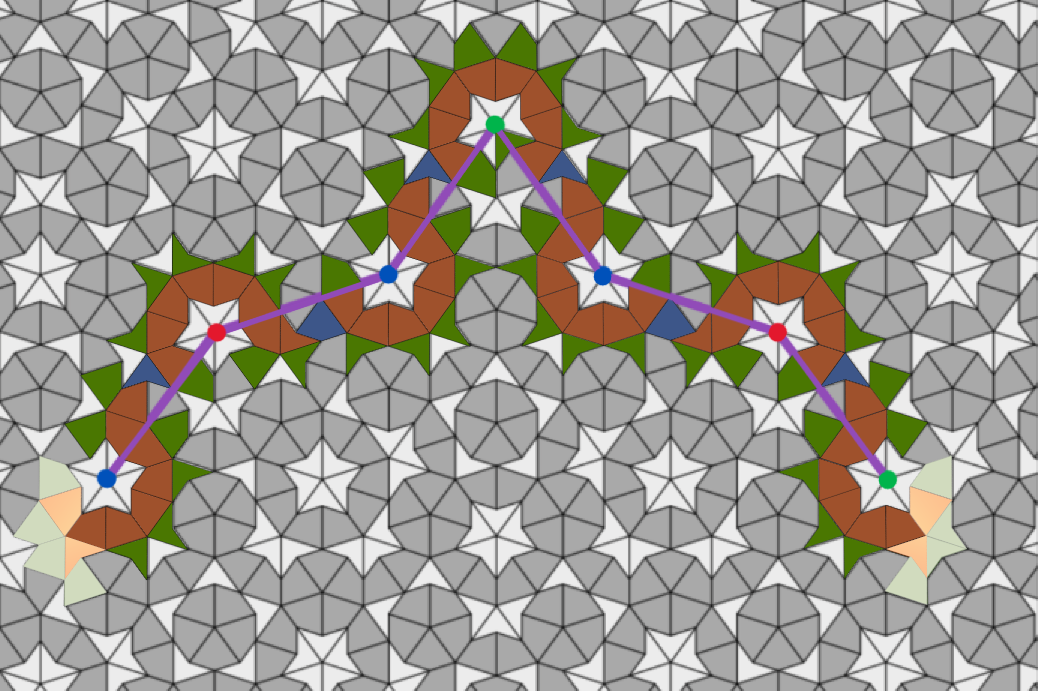}
        \caption{On chain B}
    \end{subfigure} 
    \begin{subfigure}{0.34\textwidth}
        \centering
        \includegraphics[width=\textwidth, trim=0cm 1.1cm 0cm 0cm, clip]{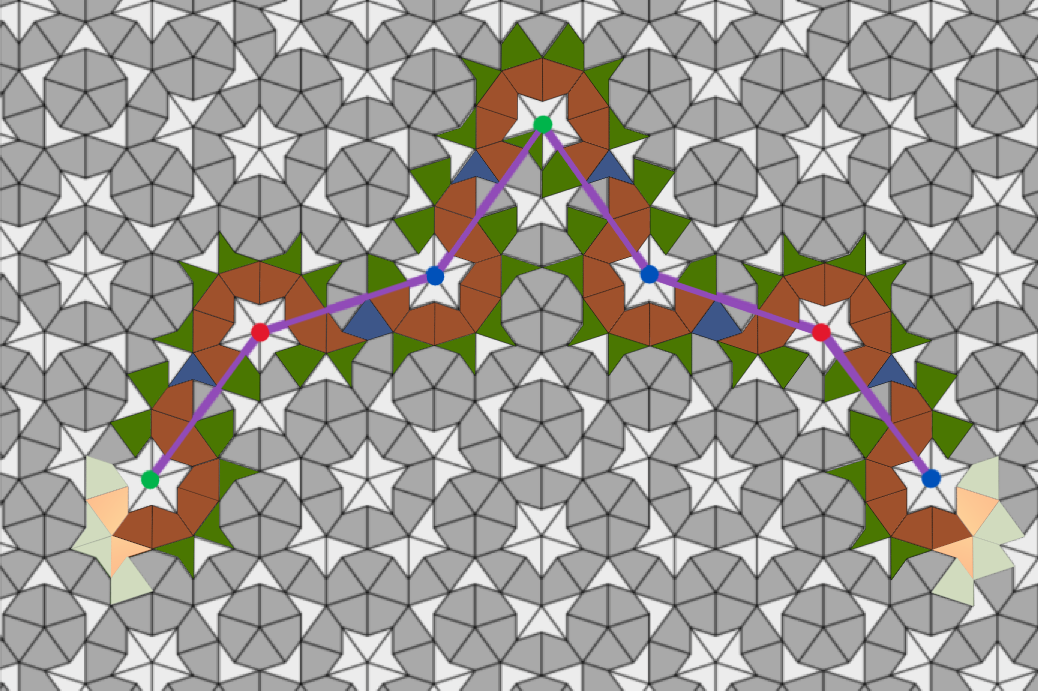}
        \caption{On chain C}
    \end{subfigure} 
    \begin{subfigure}{0.34\textwidth}
        \centering
        \includegraphics[width=\textwidth]{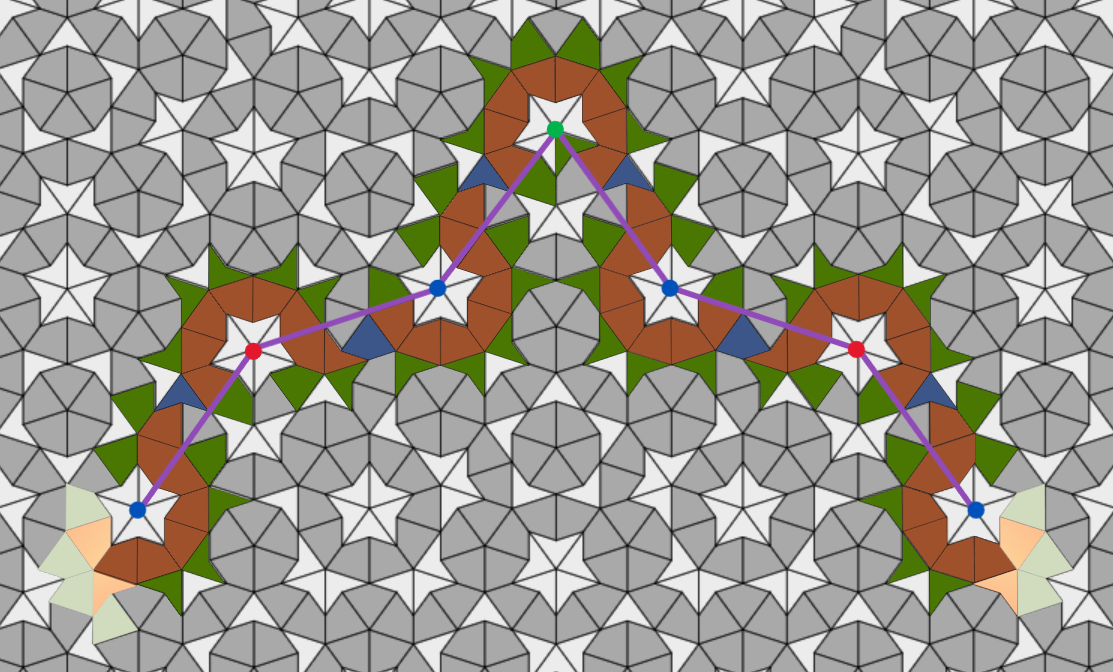}
        \caption{On chain D}
    \end{subfigure} 
    \caption{The four Porrier caterpillars. At each end, there are two beige tiles. Only one may be chosen, which then becomes an internal tile of degree 3, along with the two adjacent light-green tiles, which then become leaves.}
    \label{fig:Chenilles de Porrier}

\end{figure}

\begin{figure}[H]
    \centering
    \begin{subfigure}{0.335\textwidth}
        \centering
        \includegraphics[width=\textwidth]{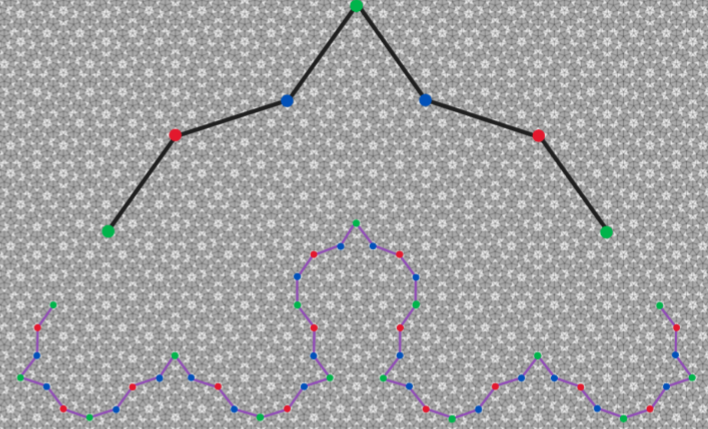}
        \caption{Case A}
    \end{subfigure} 
    \begin{subfigure}{0.33\textwidth}
        \centering
        \includegraphics[width=\textwidth, trim=0.1cm 0cm 0.3cm 0cm, clip]{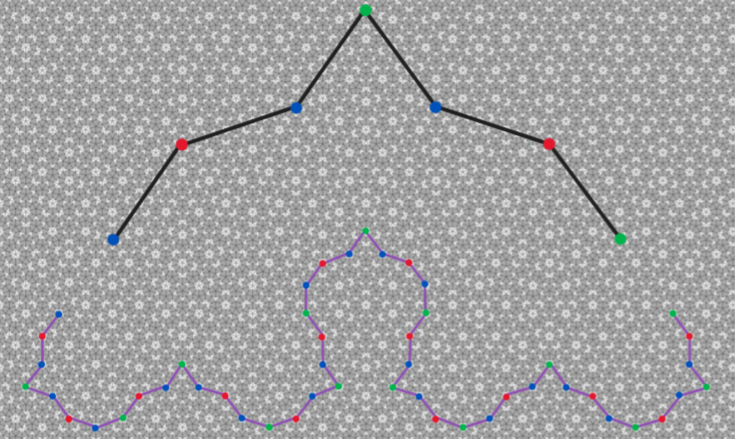}
        \caption{Case B}
    \end{subfigure} 
    \begin{subfigure}{0.33\textwidth}
        \centering
        \includegraphics[width=\textwidth]{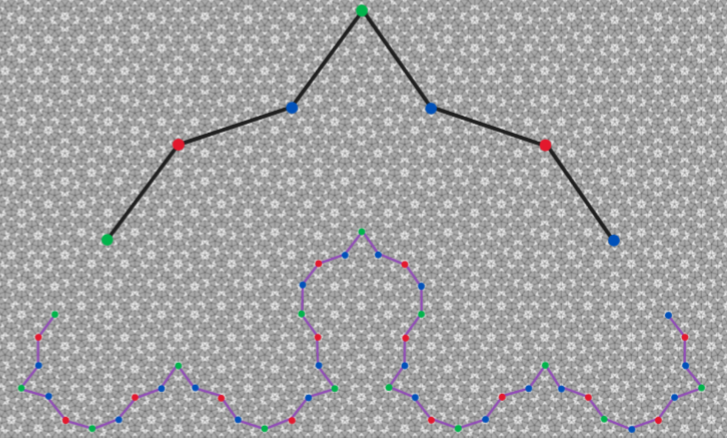}
        \caption{Case C}
    \end{subfigure} 
    \begin{subfigure}{0.335\textwidth}
        \centering
        \includegraphics[width=\textwidth, trim=1cm 0cm 2cm 0.5cm, clip]{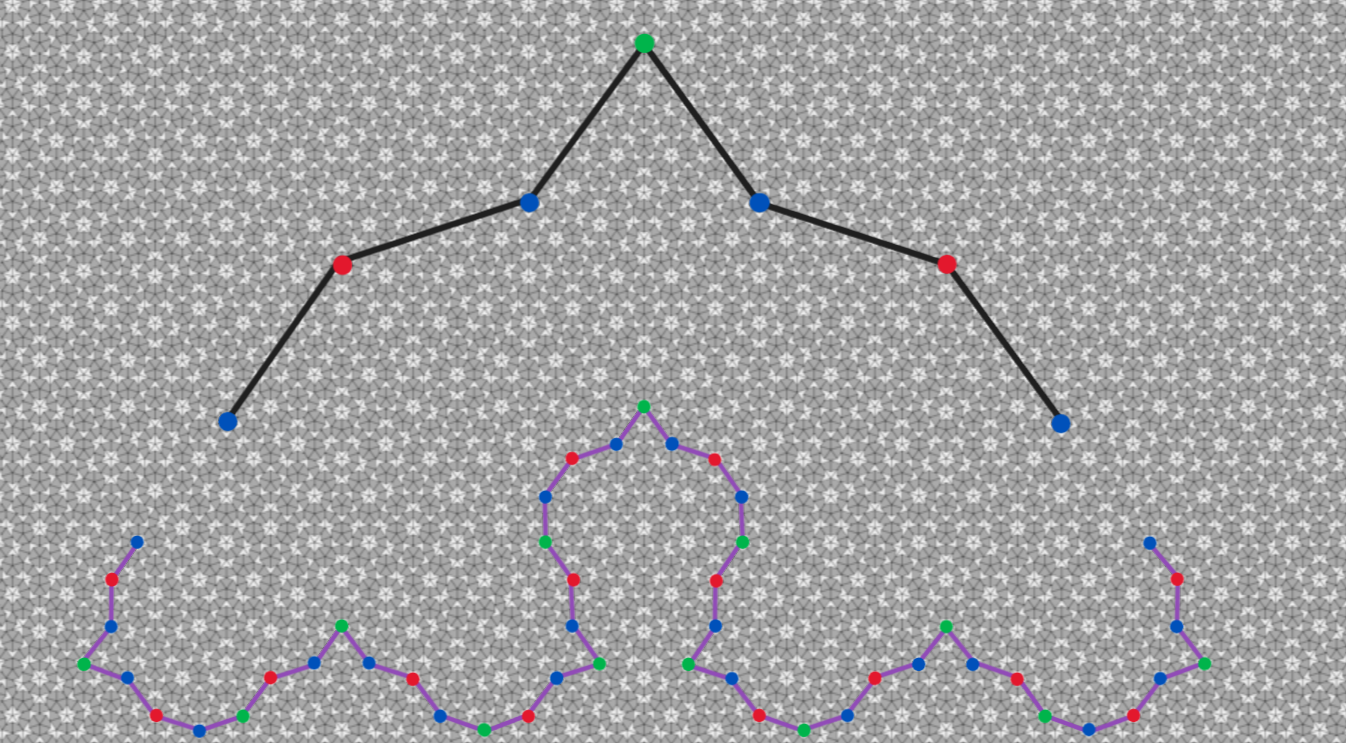}
        \caption{Case D}
    \end{subfigure} 
    \caption{The result of applying three consecutive inflations to a Porrier caterpillar. The black chains correspond to the purple chains from Figure \ref{fig:Chenilles de Porrier} (before applying three successive inflations). The purple chains indicate the structure associated with the grafting of seven Porrier caterpillars (after applying three successive inflations).}
    \label{fig:3 inf de chaînes}

\end{figure}

\begin{figure}[h]
    \centering
    \begin{subfigure}{0.45\textwidth}
        \centering
        \includegraphics[width=\textwidth]{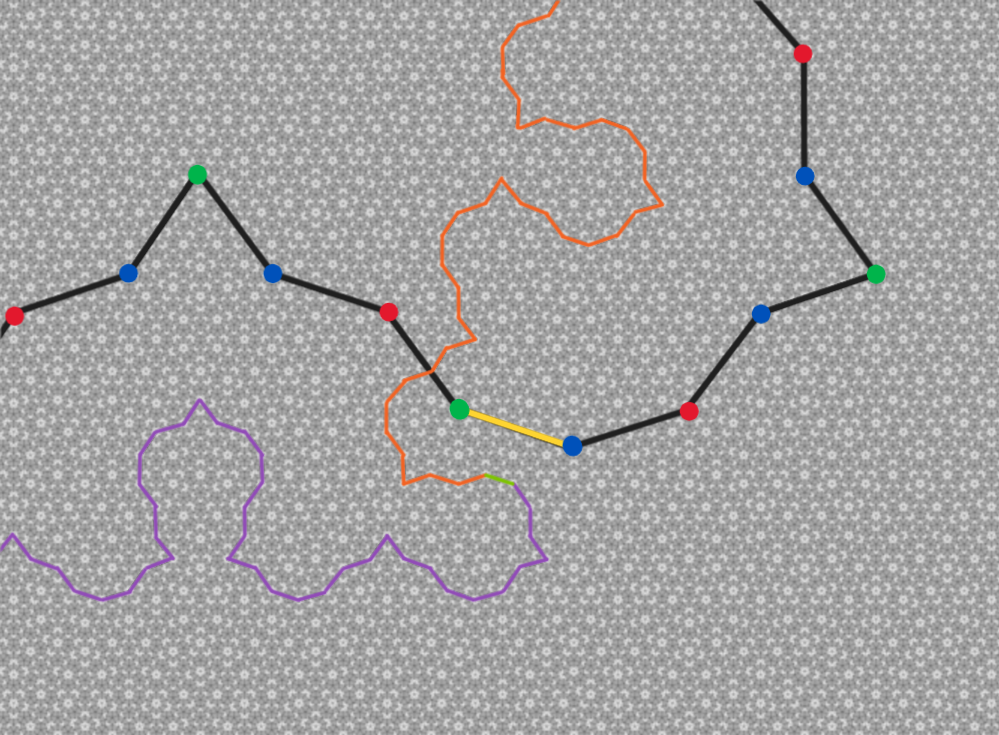}
        \caption{First possible grafting of two Porrier caterpillars}
    \end{subfigure} 
    \begin{subfigure}{0.45\textwidth}
        \centering
        \includegraphics[width=\textwidth, trim=0cm 0cm 0cm 0.81cm, clip]{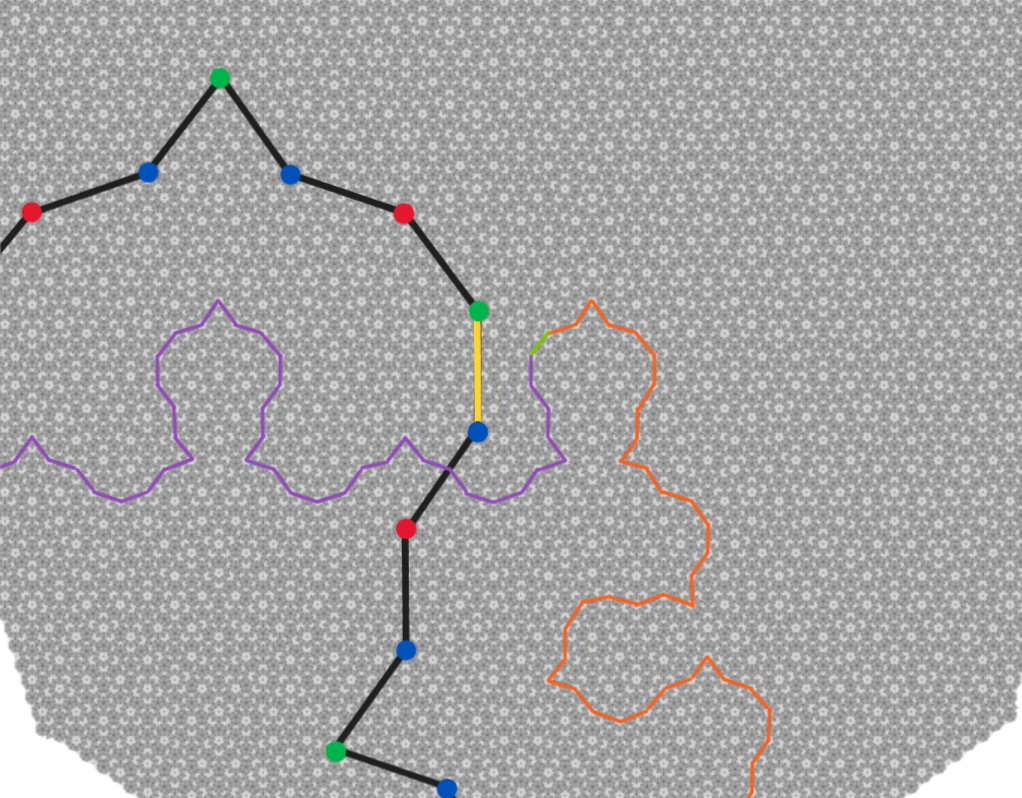}
        \caption{Second possible grafting of two Porrier caterpillars}
    \end{subfigure} 
    \caption{Graftings of two Porrier caterpillars and their successive three inflations forming a fully leafed caterpillar composed of 14 Porrier caterpillars}
    \label{fig:greffages par inflation}
\end{figure}

Consequently, for any $n \in \mathbb{N}$, it is possible to apply $3n$ successive inflations to a region determined by a Porrier caterpillar and obtain a fully leafed caterpillar formed by grafting $7n$ Porrier caterpillars. The previous figures ensure that the resulting graph is connected and acyclic: no cycles are formed locally, and by the local isomorphism property, in any P2-graph, by taking the limit, a bi-infinite fully leafed caterpillar exists.
\end{proof}

For practical reasons, we shall represent the previous construction using words over a four-letter alphabet. The vertex-color sequence followed by a chain associated with a Porrier caterpillar is always of the form \textit{$X$-red-blue-green-blue-red-$Y$}, where $X,Y \in \{\text{green, blue}\}$. This fact  defines four chains described by the alphabet $\Sigma := \{A,B,C,D\}$ and we define the letters $A,B,C,D$ by the following sequences:

\begin{align*}
    A &:= \text{green-red-blue-green-blue-red-green;}\\
    B &:= \text{blue-red-blue-green-blue-red-green;}\\
    C &:= \text{green-red-blue-green-blue-red-blue;} \\
    D &:= \text{blue-red-blue-green-blue-red-blue}.
\end{align*}

In accordance with Figure~\ref{fig:3 inf de chaînes}, we define the function $\Phi : \Sigma^* \to \Sigma^*$ by

\begin{align}
\Phi(A) &= ADADADA \label{1}\\
\Phi(B) &= DCADADA \label{2} \\
\Phi(C) &= ADADABD \label{3} \\
\Phi(D) &= DCADABD. \label{4}
\end{align}

From Figure~\ref{fig:greffages par inflation}, $\Phi$ is a morphism, and it is therefore fully determined by the four equalities $\eqref{1}$ to $\eqref{4}$.

\begin{corollary}
The procedure described in the proof of Theorem \ref{thm 1} constructs a uniquely determined bi-infinite fully leafed caterpillar.
\end{corollary}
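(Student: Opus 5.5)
The plan is to read the construction of Theorem~\ref{thm 1} through the morphism $\Phi$ and show that iteration forces a unique bi-infinite word, hence a unique caterpillar.

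First I reduce the caterpillar to its word. By the proof of Theorem~\ref{thm 1}, the caterpillar is a grafting of Porrier caterpillars, and its combinatorial type is recorded by the sequence of letters of $\Sigma$ read along the purple chains. I will argue that the word determines the caterpillar up to the choice of leaves. Each letter fixes the colour sequence of star centres, so it fixes which of the four Porrier caterpillars occurs. The grafting tile between consecutive letters is forced by Figure~\ref{fig:greffages par inflation}. Since the construction chooses each end tile to be internal when grafting, the only freedom left is the light-green leaves. Those are not part of the derived path, so "uniquely determined" should be read up to that choice, as in Figure~\ref{fig:chenilles premières}.

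Next I show that $\Phi$ has a unique relevant bi-infinite fixed point. The key facts are read off \eqref{1}--\eqref{4}:
\begin{itemize}
\item every image $\Phi(X)$ ends with $A$ or $D$ and begins with $A$ or $D$;
\item $\Phi(A)$ begins and ends with $A$;
\item $\Phi(D)$ begins and ends with $D$.
\end{itemize}
From these I show that for each seed letter, the $3n$-fold inflation in the proof is the word $\Phi^n(X)$. I then show that the central grafted pair, for example $AD$ from Figure~\ref{fig:greffages par inflation}, satisfies $\Phi(AD)=ADADADA\cdot DCADABD$. This word contains $AD$ at its centre (positions 7 and 8), so the seed $A\cdot D$ reappears at the centre, and the sequence $\Phi^n(A)\cdot\Phi^n(D)$ is nested. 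It therefore converges to a unique two-sided fixed point $\Phi^\omega(A)\cdot\Phi^\omega(D)$. The left-infinite part is determined because $\Phi(A)$ ends with $A$, and the right-infinite part because $\Phi(D)$ begins with $D$.

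Independence from the starting seed then follows from this. Any seed $X$ satisfies $\Phi(X)\in\{A,D\}\Sigma^5\{A,D\}$. After one application every seed word therefore contains the pairs produced by the morphism, and I check that these pairs all have the same limit. The pairs with a tractable common limit are $AA$, $AD$, $DA$ and $DD$. I will verify which of these actually occur as grafting junctions in Figure~\ref{fig:greffages par inflation}. The claim is that only one junction type is legal, namely the one in the figure, so the limit is forced.

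The main obstacle is this last step. I need to justify that the limiting word does not depend on the arbitrary seed and on the choice of inflation centre, and also that the geometric limit is well defined, i.e.\ that nested patches give a single caterpillar. For the geometric side I will rely on the local isomorphism property and the kingdom of the seed. For the combinatorial side I will try first to show that $\Phi^2$ is primitive and has a single legal two-sided fixed point. If two fixed points survive, I expect them to be related by the symmetry exchanging $B$ and $C$, which is reflection of the chain. In that case uniqueness would be up to isometry.
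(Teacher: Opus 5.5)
\paragraph{Main gap: independence from the seed.}
The gap is in the step you yourself call the main obstacle, and the mechanism you propose for it is false. Both junctions $AD$ and $DA$ are legal: $\Phi(A)=ADADADA$ already contains both, so there is no ``only one junction type''. Correspondingly, $\Phi$ and every power of $\Phi$ have exactly two legal two-sided fixed points, $\Phi^\omega(A)\cdot\Phi^\omega(D)$ and $\Phi^\omega(D)\cdot\Phi^\omega(A)$, so your fallback (a single legal fixed point for $\Phi^2$) fails as well.

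More importantly, the procedure of Theorem~\ref{thm 1} inflates a \emph{single} Porrier caterpillar $X$. What must be shown is that $\Phi^n(X)$, anchored at the image of $X$ (its central letter), converges to a limit that does not depend on $X\in\Sigma$. Facts about first and last letters, or about a seed pair, do not address this.

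The missing observation is that the four images \eqref{1}--\eqref{4} share their middle letter: $\Phi(X)=p_X\,D\,s_X$ with $|p_X|=|s_X|=3$ for every $X$. Hence $\Phi^{n+1}(X)=\Phi^n(p_X)\,\Phi^n(D)\,\Phi^n(s_X)$ has $\Phi^n(D)$ in central position. Taking $X=D$ shows that $\Phi^n(D)$ sits centrally inside $\Phi^{n+1}(D)$. This single fact gives both convergence and independence from the seed, and it is exactly the paper's argument.

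\paragraph{Your pair-seeded limit is a different word.}
Suppose you completed your route, with uniqueness taken up to the reversal-plus-$B\leftrightarrow C$ symmetry; that symmetry does exchange your two fixed points. It would still describe a different bi-infinite word from the one the paper obtains.

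The centred limit $W=\lim\Phi^n(D)$ satisfies $\Phi(W)=W$ only up to a shift by $3$ positions. By contrast, $y=\Phi^\omega(A)\cdot\Phi^\omega(D)$ and its reversal are exact fixed points. Since $\Phi$ is $7$-uniform, if $W$ were a shift by $m$ of such a $y$, then $y$ would have a period of the form $|6m\pm 3|\neq 0$.

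But $y$ is aperiodic. Every block $\Phi(y_k)$ has $A$ and $D$ as its third and fourth letters, so a period coprime to $7$ would force $A=D$. A period $7q$ yields the period $q$, because $\Phi$ is injective on letters, and descending this way eventually reaches a period coprime to $7$. So even a completed version of your argument proves uniqueness for a different seeding than the one in Theorem~\ref{thm 1}.
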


\begin{proof}
We show that the sequences $(\Phi^n(A))_{n\in \mathbb{N}}, (\Phi^n(B))_{n\in \mathbb{N}}, (\Phi^n(C))_{n\in \mathbb{N}}$, and $(\Phi^n(D))_{n\in \mathbb{N}}$ all converge to the same bi-infinite word.

For each $X \in \Sigma$, we write $\Phi(X) = p_X D s_X$, where $p_X$ and $s_X$ are respectively the prefix and the suffix of length 3, according to Equations \eqref{1} to \eqref{4}. By induction on $n$, we first show that for all $X\in \Sigma$, 
\[
\Phi^n(X) = p_{n,X} D s_{n,X},
\]
with $p_{n,X}$ and $s_{n,X}$ respectively prefix and suffix of length $\frac{7^n-1}{2}$. The base case $n=1$ follows directly from the definition of $\Phi$. Now, assuming the statement holds for $n$, for all $X\in \Sigma$, we have
\[
\Phi^{n+1}(X) = \Phi^n(\Phi(X)) = \Phi^n(p_X)\,\Phi^n(D)\,\Phi^n(s_X), \hspace{.5 cm}
\]
and by the induction hypothesis, $\Phi^n(D) = p_{n,D} D s_{n,D}$, so
\[
\Phi^{n+1}(X) = (\Phi^n(p_X) p_{n,D})\, D \, (s_{n,D} \Phi^n(s_X)) = p_{n+1,X} D s_{n+1,X},
\]
with lengths $\lvert p_{n+1,X} \rvert= \lvert\Phi^n(p_X) \rvert + \lvert p_{n,D}  \rvert = 3 \cdot 7^n + \frac{7^n-1}{2} = \frac{7^{n+1}-1}{2}$ and $\lvert s_{n+1,X} \rvert = \frac{7^{n+1}-1}{2}$.

The sequence $(s_{n,X})_{n\in\mathbb{N}}$ therefore converges to the left-infinite word

\[
s_D \Phi(s_X) \Phi^2(s_X) \Phi^3(s_X) \dots,
\]

and similarly for the sequence $(p_{n,X})_{n\in\mathbb{N}}$. Hence, for all $X \in \Sigma$, $(\Phi^n(X))_{n\in\mathbb{N}}$ converges to a bi-infinite word.

Finally, the limit is independent of $X$ because, in the limit, every $\Phi^n(X)$ contains $\Phi^n(D)$ in the central position and $\lim_{n\rightarrow \infty}\Phi^n(D)$ is a bi-infinite word.
\end{proof}

\section{Structure of the fully leafed induced subtrees}

All the fully leafed induced subtrees studied so far are caterpillars. In this section, we make precise the graph structure of fully leafed induced subtrees in P2-graphs in general. Theorem~\ref{thm2} and Proposition~\ref{prop3} together prove the two parts of Conjecture~1 in \cite{porrier2023leaf}.

\subsection{General structure}

We describe here the general structure of the fully leafed induced subtrees of P2-graphs.

\begin{definition}
Let $\mathcal{C}$ be a fully leafed caterpillar and $A$ a fully leafed induced subtree of a P2-graph such that $\mathcal{C}\diamond A$ is a fully leafed induced subtree. We say that $A$ is an \textbf{appendix} of $\mathcal{C}$ if the tile shared by $\mathcal{C}$ and $A$ is not adjacent to a leaf of $\mathcal{C}'$.
\end{definition}

\begin{lemma} \label{app 2}
    An appendix of a prime caterpillar has at most two internal tiles (for the appendix).
\end{lemma}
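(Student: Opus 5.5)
The argument will be a finite case analysis over the six prime caterpillars $PC_1,\dots,PC_6$ of Figure~\ref{fig:chenilles premières}, combined with the counting results above.

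\textbf{Step 1: the finite set of grafting tiles.} For each $PC_i$, I list the candidate grafting tiles $t$. These are the leaves of $PC_i$ that are not adjacent to a leaf of the derived path $PC_i'$, i.e. leaves attached to interior vertices of the spine. Up to isometry there are only a handful of such $t$ per prime caterpillar.

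\textbf{Step 2: growing the appendix.} Fix such a $t$ and an appendix $A$ with $PC_i\diamond_t A$ fully leafed. Then $t$ becomes a degree-2 tile of $PC_i \diamond_t A$, and $A$ grows away from $t$ into the region of the tiling near $PC_i$. The key geometric fact I would exploit is that every prime caterpillar is adjacent to a P2 star, as noted after Figure~\ref{fig:chenilles premières}. The star and the kingdom of $PC_i$ are forced, so the tiles around $t$ are largely determined. I would:
\begin{itemize}
\item compute the kingdom of $PC_i \cup \{t\}$ using the seven vertex neighborhoods of Figure~\ref{fig:collages de Conway};
\item enumerate, tile by tile, the ways to grow $A$ from $t$ as an induced subtree. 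Each new internal tile of $A$ must have degree $3$ in the union, by Corollary~\ref{cor 1}, since a fully leafed subtree beyond $8$ internal tiles uses degree-$2$ tiles sparingly.
\end{itemize}

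\textbf{Step 3: ruling out a third internal tile.} In each branch I would show that a third internal tile of $A$ is impossible. Either its required neighbors create a cycle with tiles of $PC_i$ (violating inducedness), or they conflict with the forced star and kingdom. Alternatively, the resulting tree would have fewer degree-$3$ tiles than a competitor of the same order, such as a prime caterpillar extended along its spine as in Figure~\ref{fig:a fully leafed caterpillar with 17 internal vertices}, which again contradicts Corollary~\ref{cor 1}.

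\textbf{Step 4: the counting comparison.} To make the comparison in Step 3 quantitative, I use Lemma~\ref{lem2}. A tree with $k$ internal tiles of which $d$ have degree $2$ has $n_1 = k-d+2$ leaves. An appendix attached at an interior spine tile spends one degree-$2$ tile at $t$. If the spine extension instead allows more degree-$3$ tiles for the same order, then the tree with the appendix is not fully leafed. Lemma~\ref{lemme 4} handles all small orders: any fully leafed tree with at most $8$ internal tiles is a subcaterpillar of a prime caterpillar, which restricts which $A$ can occur.

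The main obstacle is Step 3: carrying out the exhaustive local enumeration without missing configurations, since the P2 neighborhood is not unique around $t$. I would first reduce via symmetry of each $PC_i$ (most are mirror-symmetric about the central star), then handle the remaining cases by forced-tile propagation in the kingdom. I expect to show that any third internal tile of $A$ must touch either a tile of the star or a leaf of $PC_i$, producing a cycle.
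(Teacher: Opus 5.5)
Your plan follows the same strategy as the paper. The paper's proof is an exhaustive local enumeration, displayed in a figure: it lists where the degree-2 tile $t$ can sit, then how an appendix with one and then two degree-3 internal tiles can grow from it, and shows that no configuration admits a third internal tile. The difference is organizational. Instead of six enumerations over $PC_1,\dots,PC_6$ and their leaf choices, the paper observes two things: the double-derived caterpillar $C''$ of Figure~\ref{fig:C6} is common to all prime caterpillars, and by the definition of an appendix, $t$ is adjacent to a tile of $C''$. So one enumeration around $C''$ covers every case, excluding only the positions needed to complete $C''$ into a prime caterpillar. That core carries fewer forced tiles, but any obstruction found there holds a fortiori for each $PC_i$. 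Your version has more forced structure per case (the full caterpillar, its star and kingdom), at the price of many more cases.

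Two parts of your write-up should be corrected. First, the counting fallback in Steps~3--4 can never eliminate a branch. If $A$ is fully leafed of order $a\ge 2$, then $PC_i\diamond_t A$ has order $a+17$ and $L_{P2}(a)+8=L_{P2}(a+17)$ leaves. It is therefore automatically fully leafed as soon as it is an induced tree, and no spine extension beats it. The lemma is thus purely geometric: each branch must die either from an extra adjacency (a cycle) with the caterpillar, or from the impossibility of placing the next degree-3 tile in the forced neighborhood.

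Second, the restriction to degree-3 internal tiles of $A$ does not follow from Corollary~\ref{cor 1}. Every fully leafed tree of odd order at least $3$ contains a degree-2 tile (e.g.\ a path of three tiles). The paper's enumeration makes the same restriction, matching how appendices arise in the proof of Theorem~\ref{thm2}. Under that restriction, Corollary~\ref{cor 2} already makes every candidate $A$ fully leafed.
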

\begin{proof}
    From Figure \ref{fig:chenilles premières}, we observe that the caterpillar of Figure \ref{fig:C6} is a subcaterpillar of any prime caterpillar. By the definition of an appendix, it is sufficient to consider all possible ways of extending this caterpillar into a fully leafed induced subtree in which a degree-2 tile is adjacent to a tile of the configuration in Figure \ref{fig:C6} (excluding, of course, the possibility of placing a degree-2 tile where we know a degree-3 tile is required in order to form a prime caterpillar). Figure \ref{fig:théorème 1} presents all possible constructions.
    
    \begin{figure}[H]
    \centering
    \includegraphics[width=0.2\textwidth]{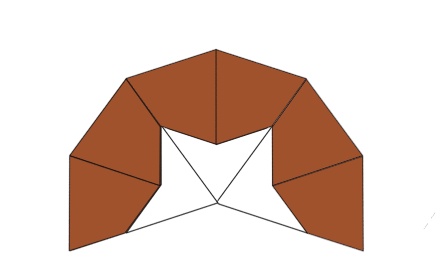}
    \caption{Caterpillar obtained by double derivation of any prime caterpillar}
    \label{fig:C6}

\end{figure} 

    \begin{figure}[H]
    \centering
    \includegraphics[width=1\textwidth]{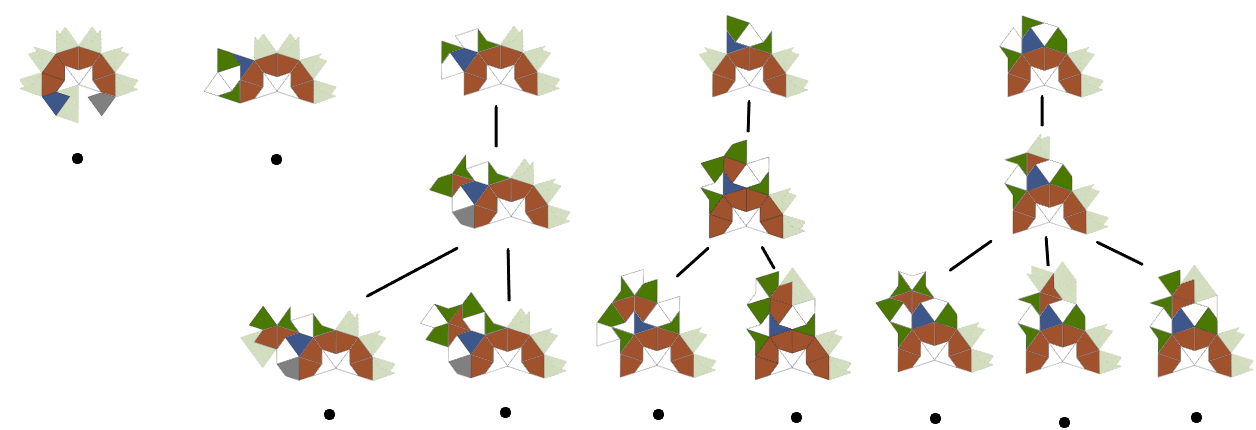}
    \caption{Set of all possible ways to graft an appendix onto a prime caterpillar (up to isometry). Grey tiles belong to the fully leafed induced subtree, without specifying whether the tile has degree 1 or 3. The first row illustrates all possible choices for the degree-2 tile in blue. The second row shows the three ways to graft an appendix of a single internal tile of degree $3$. The third row shows all possible ways to graft an appendix of two internal tiles of degree $3$. A dot is placed below an induced subtree when it becomes impossible to extend it into a fully leafed induced subtree.}
    \label{fig:théorème 1}
\end{figure}  
\end{proof}

\begin{theorem} \label{thm2}
    Every fully leafed induced subtree of a P2-graph is one of the following structures:
    \begin{enumerate}
        \item[1.] A proper subcaterpillar of a prime caterpillar structure;
        \item[2.] A caterpillar obtained by grafting one or several prime caterpillars and at most one proper subcaterpillar of a prime caterpillar structure;
        \item[3.] A caterpillar obtained by grafting one or several prime caterpillars, to which an appendix of at most two internal tiles is grafted.
    \end{enumerate}
\end{theorem}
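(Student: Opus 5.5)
We argue by induction on the number of internal tiles of a fully leafed induced subtree $I$. By Corollary~\ref{cor 1}, $I$ maximizes the number of degree-$3$ tiles among induced subtrees of its order. By Lemma~\ref{max 3}, every internal tile of $I$ has degree $2$ or $3$. If $I$ has at most $8$ internal tiles, Lemma~\ref{lemme 4} shows that $I$ is a prime caterpillar or a proper subcaterpillar of one, which gives structure 1 or structure 2 with a single prime caterpillar. We may therefore assume that $I$ has more than $8$ internal tiles. By the remark following Lemma~\ref{lemme 4}, $I$ then contains at least one tile of degree $2$.

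The key step is to decompose $I$ at its degree-$2$ tiles. Let $t$ be a degree-$2$ tile of $I$. Removing $t$ splits $I$ into two components; adding $t$ back to each gives two induced subtrees $I_1$ and $I_2$ with $I=I_1\diamond_t I_2$, and $t$ is a leaf of both. Each $I_j$ must itself be fully leafed. Otherwise we could replace it by a fully leafed tree of the same order having $t$ as a leaf, and the resulting grafting would have more degree-$3$ tiles than $I$. This exchange argument is delicate: we must check that the replacement can be re-embedded so that the union is still induced and acyclic. We plan to handle this using the fact that, by Lemma~\ref{lemme 4}, all small pieces are prime (sub)caterpillars, and these have a rigid local shape around the adjacent star. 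Iterating over all degree-$2$ tiles writes $I$ as a grafting $\Diamond_i I_i$ whose pieces have no degree-$2$ tiles. By Corollary~\ref{cor 2} and Lemma~\ref{lemme 3}, each piece is a fully leafed caterpillar all of whose internal tiles have degree $3$. By the remark after Lemma~\ref{lemme 4}, it has at most $8$ internal tiles, so it is a prime caterpillar or a proper subcaterpillar of one.

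Next we control how the pieces fit together. Consider a grafting tile $t$ joining two pieces. There are two cases: either $t$ is adjacent to a leaf of the derived path of both pieces, or it is not for at least one of them. In the first case the spines concatenate and the union remains a caterpillar. In the second case, the piece attached at a non-end position is by definition an appendix, and Lemma~\ref{app 2} shows it has at most two internal tiles. We then argue that there is at most one appendix and at most one proper (non-prime) subcaterpillar. The intended exchange argument is as follows. Suppose there are two appendices, or an appendix together with a proper subcaterpillar piece, or two proper subcaterpillar pieces. Then the internal tiles of one of them can be moved to the end of the spine so as to complete a proper piece into a prime caterpillar, which gains a degree-$3$ tile for the same order and contradicts maximality. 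A proper subcaterpillar piece in the interior of the spine can be handled the same way by pushing it to an end. This sorting yields exactly structures 2 and 3. In structure 3 the spine pieces are all prime, since any deficiency could be absorbed by the appendix tiles.

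The main obstacle is this final exchange step. It requires geometric information about the tiling: when we relocate tiles to extend a prime caterpillar or complete a proper subcaterpillar at the end of the spine, the new tiles must be available and must not create cycles or extra adjacencies. We would first try to reduce this to a finite check. Every prime caterpillar is adjacent to a star, and by the local isomorphism property the neighbourhood of a grafting region takes only finitely many forms. So the case analysis can be done on finitely many patches, in the same spirit as the case enumeration in the proof of Lemma~\ref{app 2}.
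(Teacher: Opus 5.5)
Your decomposition at degree-$2$ tiles is sound, and it needs no exchange argument. After cutting $I$ at all of its degree-$2$ tiles, every internal tile of a piece has degree $3$. So each piece is fully leafed by Corollary~\ref{cor 2}, has at most $8$ internal tiles by the remark after Lemma~\ref{lemme 4}, and is therefore a prime caterpillar or a proper subcaterpillar of one by Lemma~\ref{lemme 4}.

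\textbf{The gap.} The final sorting step fails, and no exchange can repair it. By Lemmas~\ref{max 3} and~\ref{lem2}, a tree of order $n$ satisfies $n=2n_3+n_2+2$. At fixed order, one more degree-$3$ tile therefore means two fewer degree-$2$ tiles. Shifting internal tiles from one piece to another changes neither $n_2$ nor $n_3$. It can contradict Corollary~\ref{cor 1} only if whole pieces, with their grafting tiles, disappear. In fact the configurations you want to exclude do occur:
\begin{itemize}
\item Let $C_{-1}\diamond_{t_1}C_0\diamond_{t_2}C_1$ be consecutive prime caterpillars in the caterpillar of Theorem~\ref{thm 1}. Add to $C_0$ the spine tile of $C_{-1}$ adjacent to $t_1$ and the spine tile of $C_1$ adjacent to $t_2$. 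The result is an induced subtree of order $20$ with $10=L_{P2}(20)$ leaves, so it is fully leafed. Its pieces are an edge, a prime caterpillar and an edge.
\item Delete from $C_0\diamond C_1$ the two leaves at the far end of each factor. This leaves order $31$ with $16=L_{P2}(31)$ leaves, made of two pieces with $7$ internal tiles each and no prime piece at all.
\end{itemize}
Read literally, the clause ``at most one proper subcaterpillar'' is therefore false. Similarly, a prime caterpillar carrying two one-tile appendices would have order $24$ and $12=L_{P2}(24)$ leaves. A second appendix can thus be excluded only geometrically, if at all, never by counting. Note also that Lemma~\ref{app 2} bounds only appendices of prime caterpillars, not appendices of proper pieces.

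\textbf{The paper's route.} The paper does not relocate tiles. It compares $A$ with segments of the caterpillar of Theorem~\ref{thm 1}. Since fully leafedness is extremal over all induced subtrees of a given order, such a competitor need not lie near $A$; this is the right way to dispose of your re-embedding worry. From this comparison the paper concludes that $A$ has exactly $\lfloor k/9\rfloor$ degree-$2$ tiles, and it then inducts on that number. However, its competitor has the same number $k$ of internal tiles, not the same order. At order $20$, every fully leafed tree has $k=10$ and two degree-$2$ tiles, not $\lfloor 10/9\rfloor=1$. A competitor of the same order can only expose an excess of degree-$2$ tiles, and the examples above have none. So neither your route nor the paper's yields the ``at most one'' clauses.

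What your decomposition does establish is that $I$ is a grafting of prime caterpillars and proper subcaterpillars of prime caterpillars. Anything sharper needs geometric case analysis. The statement itself must at least allow partial pieces at both ends of the chain.
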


\begin{proof}
Let $A$ be a fully leafed induced subtree of a P2-graph and let $k$ denote the order of $A'$. So there is a non-negative integer $m$ such that $9m\leq k\leq 9m+8$.

First, $A$ must contain at least $m$ tiles of degree 2. Otherwise, $A$ would contain an induced subtree with nine internal tiles all of degree~3. 

If $A$ has more than $m$ degree-2 tiles, then the number of degree-2 tiles is not minimal (equivalently, the number of degree-3 tiles is not maximal), since no matter how large $k$ is, it is possible to construct a fully leafed induced subtree with $k$ internal tiles and exactly $m$ tiles of degree~2 by Theorem \ref{thm 1}. Thus $A$ has exactly $m$ degree-2 tiles.\\

We now complete the proof by induction on $m$. If $m=0$, the result follows from Lemma \ref{lemme 4}.  If $0\leq k < 8$ then we are in  case 1 of the theorem.  If $k=8$ then we are in  case 2 of the theorem.  Now suppose  $m \geq 1$ and consider $A$ as an extension of a fully leafed induced subtree $B$ with $m-1$ degree-2 tiles. So for $I$ the fully leafed induced subtree grafted to $B$ to construct $A$, $I$ does not contain any degree-2 tile and all internal tiles of $I$ are degree-3 tiles, so then $I$ is either a prime caterpillar  or is a proper subcaterpillar  of a prime caterpillar.

If $B$ has no appendix, then wherever the degree-2 tile used to complete $B$ into $A$ is, the conclusion of Theorem \ref{thm2} holds. If $B$ already has an appendix and the addition of a new degree-2 tile to construct $A$ creates a second appendix for $A$, then by Lemma \ref{app 2}, between two and four internal tiles of $A$ belong to appendices. In this case, $A$ is not fully leafed, since it is possible to construct a fully leafed caterpillar of the same order as $A$ (by Theorem \ref{thm 1}) with more leaves (by Corollary \ref{cor 1}). For the same reason, $A$ satisfies case 2 or case 3, but not both.
\end{proof}

\subsection{Saturated fully leafed induced subtrees}

We denote the leaf function of any P2-graph by $L_{P2}$. From \cite{porrier2023leaf}, we have the following recursive formula:

\begin{equation} \label{formule récursive}
  L_{P2}(n) = 
\begin{cases}
0 & \text{ if } 0 \leq n \leq 1, \\
\left\lfloor \frac{n}{2} \right\rfloor + 1 & \text{ if } 2 \leq n \leq 18, \\
L_{P2}(n-17)+8 &\text{ if } n\geq 19.\\
\end{cases}
\end{equation}

\vspace{30pt}

We first correct an error in \cite{porrier2023leaf} (Proposition 3 of \cite{porrier2023leaf}) and present a correct closed formula for $L_{P2}(n)$.
\begin{proposition} \label{prop:formule close} 
$L_{P2}(n)=
\begin{cases}
0 & \text{if } 0 \leq n \leq 1, \\ 
\left\lfloor \dfrac{n}{2} \right\rfloor + 1 & \text{if } 2 \leq n \leq 18, \\ 
8 \left\lfloor \dfrac{n}{17} \right\rfloor + \left\lfloor \dfrac{n \bmod 17}{2} \right\rfloor + 1 + \mathbbm{1}(n \bmod 17 = 1) & \text{if } n \geq 19,
\end{cases}$

where 
\[
 \mathbbm{1}(x) =
\begin{cases}
1 
& \mbox{ if $x$ is true},  \\
0 & \mbox{ otherwise }
\end{cases}
\]
is the usual characteristic function. 
\end{proposition}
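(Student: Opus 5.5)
We will derive the closed formula directly from the recursive formula \eqref{formule récursive} by strong induction on $n$. The first two cases of the statement coincide with those of \eqref{formule récursive}, so only the case $n \geq 19$ needs an argument. Write $n = 17q + r$ with $0 \leq r \leq 16$, and denote by $F(n)$ the proposed expression
\[
F(n) = 8q + \left\lfloor \tfrac{r}{2} \right\rfloor + 1 + \mathbbm{1}(r=1).
\]
The goal is to show $L_{P2}(n) = F(n)$ for all $n \geq 19$.

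\textbf{Base range $19 \leq n \leq 35$.} Here $2 \leq n-17 \leq 18$, so the recursion together with the second case gives
\[
L_{P2}(n) = \left\lfloor \tfrac{n-17}{2} \right\rfloor + 9 .
\]
We compare this with $F(n)$ in three sub-cases.
\begin{itemize}
\item For $19 \leq n \leq 33$ we have $q=1$ and $r = n-17 \in \{2,\dots,16\}$, so $F(n) = 8 + \lfloor r/2 \rfloor + 1$, which is exactly the value above.
\item For $n=34$ we have $q=2$ and $r=0$. Then $F(34) = 17$ and $\lfloor 17/2 \rfloor + 9 = 17$.
\item For $n=35$ we have $q=2$ and $r=1$. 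Then $F(35) = 16 + 0 + 1 + 1 = 18$ and $\lfloor 18/2 \rfloor + 9 = 18$.
\end{itemize}

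\textbf{Inductive step $n \geq 36$.} Now $n - 17 \geq 19$, so the induction hypothesis applies to $n-17$ and gives $L_{P2}(n-17) = F(n-17)$. Since $n-17 = 17(q-1) + r$ has the same residue $r$, we get
\[
F(n-17) + 8 = 8(q-1) + \left\lfloor \tfrac{r}{2} \right\rfloor + 1 + \mathbbm{1}(r=1) + 8 = F(n).
\]
Hence $L_{P2}(n) = L_{P2}(n-17) + 8 = F(n)$, which completes the induction.

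\textbf{Main point of care.} The only delicate step is the wrap-around in the base range at $r \in \{0,1\}$, that is, $n = 34, 35$. There the quotient jumps from $1$ to $2$ while the term $\lfloor (n-17)/2 \rfloor$ does not jump in step with $8q + \lfloor r/2 \rfloor$, because $17$ is odd. The indicator $\mathbbm{1}(n \bmod 17 = 1)$ exists precisely to compensate for this parity mismatch at $r = 1$. Checking these two boundary values explicitly is where the erroneous formula of \cite{porrier2023leaf} fails, so we will verify them carefully rather than treat them as routine.
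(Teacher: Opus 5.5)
Your proof is correct and follows essentially the paper's route: induction along the step-$17$ recursion, with the residues $r\in\{0,1\}$ (i.e.\ $n=34,35$, where $n-17$ lands in the second case of the recursion) checked separately. The difference is only organizational. The paper first proves the unrolled identity $L_{P2}(n)=L_{P2}\bigl(n-17(\lfloor n/17\rfloor-1)\bigr)+8(\lfloor n/17\rfloor-1)$ and then evaluates $L_{P2}(r+17)$, whereas you verify directly that the closed form satisfies $F(n)=F(n-17)+8$ and agrees with $L_{P2}$ on the window $19\le n\le 35$. Your version is slightly cleaner, and it avoids the paper's stated hypothesis range $35\le n\le k$, which does not literally cover $k-16$.
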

\begin{proof}\footnote{The proof is similar to the one in \cite{porrier2023leaf}, with a correction.}
For $0\leq n\leq 18$ the claim is immediate from recursion \eqref{formule récursive}.
For $n\ge 19$, we prove by strong induction on $n$ the recurrence
\begin{equation} \label{pour formule close}
    L_{P2}(n)=L_{P2}\!\bigl(n-17(\lfloor n/17\rfloor -1)\bigr) + 8(\lfloor n/17\rfloor -1).
\end{equation}

For $19\leq n\leq 33$, since $\lfloor 33/17\rfloor=1=\lfloor 19/17\rfloor$, we easily verify that the right-hand side of Equation \eqref{pour formule close} equals $L_{P2}(n)$. For $n=34$ or $n=35$, $\lfloor n/17\rfloor=2$, and by the recursive formula \eqref{formule récursive}, Equation \eqref{pour formule close} holds. Suppose now that Equation \eqref{pour formule close} holds for every $n$ such that $35\leq n \leq k$, for some arbitrary integer $k\geq 35$. We prove that Equation \eqref{pour formule close} holds for $n=k+1$. By the recursion \eqref{formule récursive} and by the induction hypothesis, we have
\begin{align}
 L_{P2}(k+1)&=L_{P2}(k-16)+8\\
 &=L_{P2}\left(k-16-17\left(\left\lfloor \frac{k-16}{17}\right\rfloor-1\right)\right) +8\left(\left\lfloor \frac{k-16}{17}\right\rfloor-1\right)+8.\label{5} 
\end{align}
Since $k+1\geq 2(17)$, we have

\begin{align} \label{6}
    \left\lfloor \frac{k+1}{17}\right\rfloor=\left\lfloor \frac{k-16}{17}\right\rfloor+1. 
\end{align}

From Equations \eqref{5} and \eqref{6}, we obtain

\begin{align*}
    L_{P2}(k+1)&=L_{P2}\left(k-16-17\left(\left\lfloor \frac{k+1}{17}\right\rfloor-2\right)\right)+8\left(\left\lfloor \frac{k+1}{17}\right\rfloor-2\right)+8\\
&=L_{P2}\left(k+1-17\left(\left\lfloor \frac{k+1}{17}\right\rfloor-1\right)\right)+8\left(\left\lfloor \frac{k+1}{17}\right\rfloor-1\right),
\end{align*}
which completes the induction. So Equation \eqref{pour formule close} holds for all $n\ge 19$.\\

Now let $n\geq 19$ and write $n=17q+r$ so that $q=\lfloor n/17\rfloor$ and $r=n \bmod 17$ (the remainder of the division of $n$ by 17). So we have

\begin{equation} \label{eq:reduce-to-r+17}
    L_{P2}(n)=L_{P2}(r+17)+8(q-1).
\end{equation}

If $2\le r\le 16$, then $r+17\ge 19$ and by recursion \eqref{formule récursive} we obtain $L_{P2}(r+17)=L_{P2}(r)+8$. Since $r\le 16$, $L_{P2}(r)=\lfloor r/2\rfloor+1$, hence
\[
L_{P2}(n)=\lfloor r/2\rfloor+1 + 8 + 8(q-1)=\lfloor r/2\rfloor+1 + 8q,
\]
which satisfies the closed formula of Proposition~\ref{prop:formule close}.

If $0\leq r \leq 1$, then $17\leq r+17 \leq 18$ and by the recursion \eqref{formule récursive},
\[
L_{P2}(r+17)=\left\lfloor \tfrac{r+17}{2}\right\rfloor + 1.
\]
Substituting this into Equation \eqref{eq:reduce-to-r+17}, we obtain

\[
L_{P2}(n)=\left\lfloor \tfrac{r+17}{2}\right\rfloor + 1 + 8(q-1),
\]
and that satisfies the formula of Proposition~\ref{prop:formule close}.

\end{proof}

\begin{lemma}
 As $n\to \infty$, $L_{P2}(n) \sim \frac{8n}{17}$.
\end{lemma}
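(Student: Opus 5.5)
The plan is to read the asymptotics directly off the closed formula of Proposition~\ref{prop:formule close}, squeezing $L_{P2}(n)$ between two linear functions whose slopes are both $8/17$. For $n\geq 19$, write $n=17q+r$ with $q=\lfloor n/17\rfloor$ and $r=n \bmod 17\in\{0,\dots,16\}$. The closed formula gives
\[
L_{P2}(n)=8q+\left\lfloor \tfrac{r}{2}\right\rfloor+1+\mathbbm{1}(r=1).
\]

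The key step is to control the correction term $E(n):=\lfloor r/2\rfloor+1+\mathbbm{1}(r=1)$. Since $0\leq r\leq 16$, we have $1\leq E(n)\leq 9$, so $E(n)$ is bounded independently of $n$. It remains to handle $8q$. From $q=(n-r)/17$ we get $\frac{n-16}{17}\leq q\leq \frac{n}{17}$. Combining these bounds gives
\[
\frac{8(n-16)}{17}+1\leq L_{P2}(n)\leq \frac{8n}{17}+9 \qquad (n\geq 19).
\]

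Dividing by $\frac{8n}{17}$ yields
\[
1-\frac{16}{n}+\frac{17}{8n}\leq \frac{L_{P2}(n)}{8n/17}\leq 1+\frac{153}{8n}.
\]
Letting $n\to\infty$ and applying the squeeze theorem gives $L_{P2}(n)/(8n/17)\to 1$, that is, $L_{P2}(n)\sim \frac{8n}{17}$.

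There is no real obstacle here. The only point needing care is to use the closed formula of Proposition~\ref{prop:formule close}, whose validity requires $n\geq 19$; this is harmless for an asymptotic statement. Alternatively, one could iterate the recursion~\eqref{formule récursive} directly: it shows that $L_{P2}(n)-\frac{8n}{17}$ is periodic with period $17$ for $n\geq 19$, hence bounded, and the conclusion follows in the same way.
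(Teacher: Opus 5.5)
Your proof is correct and takes the same route as the paper: both read the asymptotics off the closed formula of Proposition~\ref{prop:formule close}, and the paper simply computes $\lim_{n\to\infty} L_{P2}(n)/n = \frac{8}{17}$ by noting that the bounded correction term divided by $n$ vanishes. Your explicit squeeze bounds $\frac{8(n-16)}{17}+1\leq L_{P2}(n)\leq \frac{8n}{17}+9$ for $n\geq 19$ are a slightly more detailed and fully rigorous version of that one-line limit computation.
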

\begin{proof}
    We compute the limit $\lim_{n\to \infty} \frac{L_{P2}(n)}{n}$.
    \begin{align*}
        \lim_{n\to \infty} \frac{L_{P2}(n)}{n}=\lim_{n\to \infty} \left(\frac{8}{n} \left\lfloor \dfrac{n}{17} \right\rfloor + \frac{\left\lfloor \dfrac{n \bmod 17}{2} \right\rfloor + 1 + \mathbbm{1}(n \bmod 17 = 1)}{n}\right)=\frac{8}{17}.
    \end{align*}
\end{proof}

Since $L_{P2}$ has an asymptotic linear growth with slope $\frac{8}{17}$, there exists a linear function $\overline{L_{P2}} = \frac{8}{17}n + b$ and an integer $N\in \mathbb{N}$ such that for all $n\geq N$, $\overline{L_{P2}}(n)\geq L_{P2}(n)$ with minimal intercept $b \in \mathbb{R}$.

\begin{definition}[\cite{masse2018saturated}]
    For $\overline{L_{P2}}$ as described above, a fully leafed induced subtree of order $n$ is \textbf{saturated} if $L_{P2}(n) = \overline{L_{P2}}(n)$.
\end{definition}

\begin{proposition} \label{sup}
    For all integers $n\geq 1$, $\overline{L_{P2}}(n) = \frac{8n+26}{17}$.
\end{proposition}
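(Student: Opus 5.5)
The plan is to compute, for each residue class modulo $17$, the excess $E(n) := L_{P2}(n) - \frac{8n}{17}$ using the closed formula of Proposition~\ref{prop:formule close}. By definition of $\overline{L_{P2}}$, the minimal admissible intercept $b$ is then $\limsup_{n\to\infty} E(n)$. Once we show $E$ is periodic for $n\ge 19$, this $\limsup$ is simply the maximum of $E$ over one period. We will then check separately that the line $\frac{8n+26}{17}$ also dominates $L_{P2}(n)$ for the small values $1\le n\le 18$, so the statement holds for all $n\geq 1$.

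For $n\ge 19$ write $n = 17q + r$ with $0\le r\le 16$. Proposition~\ref{prop:formule close} gives
\[
E(n) = \left\lfloor \tfrac{r}{2}\right\rfloor + 1 + \mathbbm{1}(r=1) - \tfrac{8r}{17},
\]
which depends only on $r$. We then treat three cases.
\begin{enumerate}
\item[(i)] For $r$ even, $E = \frac{r}{2}+1-\frac{8r}{17} = \frac{r+34}{34}$. This is increasing in $r$, so its maximum is $\frac{50}{34}=\frac{25}{17}$, attained at $r=16$.
\item[(ii)] For $r$ odd with $r\ne 1$, $E = \frac{r+1}{2}-\frac{8r}{17} = \frac{r+17}{34}$. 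Its maximum is $\frac{32}{34}=\frac{16}{17}$, attained at $r=15$.
\item[(iii)] For $r=1$, the indicator contributes an extra $1$, so $E = 2-\frac{8}{17} = \frac{26}{17}$.
\end{enumerate}
Hence $\max_r E = \frac{26}{17}$. It is attained for every $n\equiv 1 \pmod{17}$, which occurs for infinitely many $n$. Therefore no smaller $b$ works for any threshold $N$, while $b=\frac{26}{17}$ works with $N=19$. This gives $\overline{L_{P2}}(n)=\frac{8n+26}{17}$.

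It remains to handle $1\le n\le 18$, where $L_{P2}(n)=\lfloor n/2\rfloor+1$ (and $L_{P2}(1)=0$). Here $\lfloor n/2\rfloor + 1 - \frac{8n}{17}$ is at most $\frac{n+34}{34}$ for even $n$ and $\frac{n+17}{34}$ for odd $n$. Over this range these bounds are at most $\frac{52}{34}=\frac{26}{17}$, with equality exactly at $n=18$. Consistently, $18\equiv 1 \pmod{17}$ is the first saturated order.

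The main obstacle is conceptual rather than computational. We must read the definition of $\overline{L_{P2}}$ correctly: the intercept must be minimal among lines dominating $L_{P2}$ eventually. So we must argue both that $\frac{26}{17}$ suffices and that it is attained infinitely often, which the residue class $r=1$ provides. We must also take care that the indicator term in Proposition~\ref{prop:formule close} is what produces the maximum, rather than the naive $r=16$ case.
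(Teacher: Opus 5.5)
Your proof is correct and follows essentially the paper's argument. Both use the closed formula with $n=17q+r$ to reduce to a per-residue check that $L_{P2}(n)\le \frac{8n+26}{17}$ for all $n\ge 1$ (with the range $n\le 18$ handled separately), and both get minimality from equality on the infinitely many $n\equiv 1 \pmod{17}$. You compute the exact excess $L_{P2}(n)-\frac{8n}{17}$ in each residue class rather than only verifying the inequality. This makes explicit that $r=1$, through the indicator term, is the unique maximizer, but it is a presentational difference only.
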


\begin{proof}
    Let $f(n)=\frac{8n+26}{17}$. We first show that $L_{P2}(n) \leq f(n)$ for all $n \in \mathbb{N}$.

    For $0 \leq n \leq 1$, this is obvious. For $2 \leq n \leq 18$, we have
    \[
        \left\lfloor \tfrac{n}{2} \right\rfloor + 1 \leq \tfrac{n}{2}+1 \leq \tfrac{8n+26}{17},
    \]

    hence the inequality holds.

    For $n \geq 19$, we can write $n=17q+r$ with $q=\lfloor n/17\rfloor$ and $r=n \bmod 17$. Hence, by substituting this into the closed formula of Proposition \ref{prop:formule close}, the inequality to verify is reduced to
    \[
        \left\lfloor \tfrac{r}{2} \right\rfloor + 1 + \mathbbm{1}(r=1) \leq \tfrac{8r+26}{17}.
    \]
    If $r=1$, both sides equal $2$. Otherwise, $\mathbbm{1}(r=1)=0$ and we easily verify that the inequality is valid. Thus $L_{P2}(n) \leq f(n)$ for every $n\in \mathbb{N}$.

    To prove minimality, let $n_0=17k+1$ with $k \geq 2$. Then, by Proposition \ref{prop:formule close},
    \[
        L_{P2}(n_0)=8k+2=\tfrac{8(17k+1)+26}{17}=f(n_0),
    \]
    so the bound $f(n)$ is reached infinitely often. Hence no smaller affine function of slope $\tfrac{8}{17}$ can bound $L_{P2}$, and the result follows.
\end{proof}

\begin{lemma}\label{sat}
    For an integer $n\geq 1$, let $A$ be a fully leafed induced subtree of order $n$. Then $A$ is saturated if and only if $n= 17k+1$ for some integer $k\geq 1$.
\end{lemma}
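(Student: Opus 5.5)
The plan is to reduce the statement to an equality test between $L_{P2}(n)$ and $f(n)=\frac{8n+26}{17}$. By definition, saturation of a fully leafed induced subtree $A$ of order $n$ depends only on $n$: $A$ is saturated if and only if $L_{P2}(n)=\overline{L_{P2}}(n)$. By Proposition~\ref{sup}, $\overline{L_{P2}}(n)=f(n)$. So it suffices to show that $L_{P2}(n)=f(n)$ holds exactly when $n\equiv 1 \pmod{17}$ and $n\geq 18$. The inequality $L_{P2}(n)\le f(n)$ is already known from Proposition~\ref{sup}, so only the equality cases need to be located. I will follow the three ranges of the closed formula in Proposition~\ref{prop:formule close}.

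For $0\le n\le 1$ we have $L_{P2}(n)=0<f(n)$, so there is no equality. For $2\le n\le 18$ I will use the chain
\[
\left\lfloor \tfrac{n}{2}\right\rfloor+1\le \tfrac{n}{2}+1 \le f(n).
\]
The gap in the second inequality is $f(n)-\frac n2-1=\frac{18-n}{34}$. This vanishes only at $n=18$, and there $\lfloor 18/2\rfloor=9$, so the first inequality is also an equality. Hence in this range equality holds exactly for $n=18=17\cdot 1+1$, which covers $k=1$.

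For $n\ge 19$, write $n=17q+r$ with $0\le r\le 16$. As in the proof of Proposition~\ref{sup}, the term $8q$ cancels, and equality becomes
\[
\left\lfloor \tfrac r2\right\rfloor+1+\mathbbm{1}(r=1)=\tfrac{8r+26}{17}.
\]
If $r=1$, both sides equal $2$, which gives the orders $n=17q+1$ with $q\ge 2$. If $r\ne 1$, the left side is at most $\frac r2+1$ and the right side exceeds it by at least $\frac{18-r}{34}>0$, because $r\le 16$. (For $r=0$ one can also see directly that $1<\frac{26}{17}$.) So there is no equality when $r\neq 1$. Combining the two ranges, $L_{P2}(n)=f(n)$ if and only if $n=17k+1$ with $k\ge1$.

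There is no real obstacle here; the step needing the most care is the boundary bookkeeping. The case $k=1$ ($n=18$) falls in the middle range of the closed formula rather than the third, so it must be handled by the first case analysis. Also, in the third range $r=0$ and $r=1$ correspond to the special values $L_{P2}(17q)$ and $L_{P2}(17q+1)$, so I will check the closed formula there directly rather than rely on the generic bound.
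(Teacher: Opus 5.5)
Your proposal is correct and follows essentially the same route as the paper: both reduce saturation to the equality $L_{P2}(n)=\frac{8n+26}{17}$ via Proposition~\ref{sup}, settle the range $n\le 18$ directly, and for $n\ge 19$ reduce to the residue equation in $r=n \bmod 17$. The differences are cosmetic. You exclude equality using the explicit gap $\frac{18-n}{34}$, whereas the paper splits by parity and finds the only candidates $n=18$ and $n=35$. You also treat $n\in\{0,1\}$ explicitly, which the paper glosses over.
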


\begin{proof}
    Write $n=17q+r$ with $q=\left\lfloor \tfrac{n}{17}\right\rfloor$ and $r=n \bmod 17$.
 
   For $n=18$, we have
    \[
        L_{P2}(18)=\left\lfloor \tfrac{18}{2}\right\rfloor+1=10=\tfrac{8\cdot 18+26}{17}=\overline{L_{P2}}(18),
    \]
    hence $A$ is saturated. For larger $n=17k+1$, the conclusion follows from the end of the proof of Proposition \ref{sup}.\\

    Conversely, suppose $A$ is saturated. If $n\leq 17$, the equality
    \begin{align}
        \left\lfloor \tfrac{n}{2}\right\rfloor+1=\tfrac{8n+26}{17} \label{equation a}
    \end{align}
   
    has no solution. Indeed, if $\left\lfloor \tfrac{n}{2}\right\rfloor=\frac{n}{2}$, Equation \eqref{equation a} implies $n=18$ and if $\left\lfloor \tfrac{n}{2}\right\rfloor=\frac{n-1}{2}$, Equation \eqref{equation a} implies $n=35$, and none of these two cases satisfies $n\leq 17$. We now consider $n\geq 18$. If $n=18$, Equation \eqref{equation a} is satisfied. If $n\geq 19$ with $r\neq 1$, then from Propositions \ref{prop:formule close} and \ref{sup}, we need to have
    
    \[
        \left\lfloor \tfrac{r}{2}\right\rfloor+1=\tfrac{8r+26}{17}.
    \]

    But, as we have just checked, this equation has no solution for $0\leq r\leq 16$. Thus $r=1$, and the claim follows.
\end{proof}

The next proposition shows that a fully leafed induced subtree is saturated if and only if it is a prime caterpillar or a caterpillar obtained by successive graftings of prime caterpillars.

\begin{proposition} \label{prop3}
Let $A$ be a fully leafed induced subtree. Then $A$ is saturated if and only if there exists a sequence of prime caterpillars $(C_k)_{k=1}^m$, for some $m \geq 1$, such that 
\[
A=\Diamond_{k=1}^m C_k.
\]

\end{proposition}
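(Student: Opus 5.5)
By Lemma~\ref{sat}, $A$ is saturated exactly when its order is $n=17k+1$ for some $k\geq 1$. So I would turn the proposition into a counting statement and combine it with the structural classification of Theorem~\ref{thm2}. The basic data are the following. A prime caterpillar has $8$ internal tiles, all of degree $3$, so by Lemma~\ref{lem2} it has $10$ leaves and order $18$. Grafting two fully leafed subtrees at a common leaf $t$ identifies one tile and turns $t$ into an internal tile of degree $2$.

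\emph{($\Leftarrow$)} Let $A=\Diamond_{k=1}^m C_k$ with each $C_k$ prime. There are $m-1$ graftings, and each identifies one tile, so the order is $18m-(m-1)=17m+1$. The hypothesis of the proposition says that $A$ is fully leafed. Lemma~\ref{sat} with $k=m$ then gives that $A$ is saturated. The only point to check is that distinct graftings use distinct tiles, so that the count $18m-(m-1)$ is exact; this follows from the definition of a sequence of graftings.

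\emph{($\Rightarrow$)} Suppose $A$ is saturated. By Lemma~\ref{sat}, $n=17k+1$ with $k\geq1$. By Proposition~\ref{prop:formule close}, $A$ has $L_{P2}(n)=8k+2$ leaves. Lemma~\ref{lem2} then gives $n_3=8k$ tiles of degree $3$, and the remaining internal tiles have degree $2$, so $n_2=n-n_1-n_3=(17k+1)-(8k+2)-8k=k-1$. Now I would run through the three cases of Theorem~\ref{thm2} and count $n_2$ and $n_3$ in terms of the pieces.

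\emph{Case 1.} $A$ is a proper subcaterpillar of a prime caterpillar, so it has fewer than $18$ tiles. This contradicts $n\geq18$.

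\emph{Case 2.} $A$ is $j$ prime caterpillars grafted together with one proper subcaterpillar that has $s$ internal degree-$3$ tiles. Then there are $j$ graftings, so $n_2=j$ and $n_3=8j+s$. The equations $n_2=k-1$ and $n_3=8k$ force $s=8$. A subcaterpillar with $8$ internal degree-$3$ tiles has the full derived path of a prime caterpillar. Since a prime caterpillar is determined by its derived path, that last piece is itself prime, and $A$ is a grafting of $j+1=k$ prime caterpillars, as claimed.

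\emph{Case 3.} $A$ is $j$ prime caterpillars plus an appendix with $a\leq 2$ internal tiles. There are again $j$ graftings, so $n_2=j$ and $n_3=8j+a$. The same equations force $a=8$, contradicting $a\leq2$. Hence saturated subtrees never carry an appendix.

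The main obstacle I expect is the bookkeeping in Cases~2 and~3. I must be sure that every degree-$2$ tile of $A$ comes from exactly one grafting, as the proof of Theorem~\ref{thm2} indicates (exactly $m$ degree-$2$ tiles when $9m\leq|A'|\leq 9m+8$). I must also be sure that the internal tiles of the appendix are all of degree $3$. I would cross-check the counts with that proof: here $|A'|=n_2+n_3=9k-1$, so its $m$ equals $k-1=n_2$, which is consistent. I would also make explicit the identification of an ``$8$-internal-tile proper subcaterpillar'' with a prime caterpillar, using the remark in Figure~\ref{fig:chenilles premières} that prime caterpillars are determined up to the choice of leaves.
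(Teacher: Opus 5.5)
Your proposal is correct and takes essentially the same route as the paper. Both directions reduce to the order characterization of Lemma~\ref{sat}, and the converse runs through the three cases of Theorem~\ref{thm2}. The only difference is bookkeeping: the paper rules out the unwanted cases by showing $n\not\equiv 1 \pmod{17}$ directly, while you compare $(n_2,n_3)$ with the values $(k-1,8k)$ forced by $L_{P2}(17k+1)=8k+2$ and Lemma~\ref{lem2}, with Lemma~\ref{max 3} excluding higher degrees.

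One small point in Case~2: a fully leafed piece of odd order at most $17$ has exactly one degree-$2$ tile of its own, so $n_2$ may be $j+1$ rather than $j$. That forces $s=16$, which is equally impossible, so your conclusion stands.
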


\begin{proof}
Let $A$ be a fully leafed induced subtree of order $n$. First, for some $m \geq 1$, suppose that there exists a sequence of prime caterpillars $(C_k)_{k=1}^m$, such that $A=\Diamond_{k=1}^m C_k$. Each prime caterpillar contains 8 internal degree-3 tiles (by definition). Hence, $A$ has $8m$ degree-3 tiles. Moreover, for all $i\in \{1, \ldots, m-1\}$, the grafting $C_i \diamond _t C_{i+1}$ is such that $t$ is a degree-2 tile for $A$. By these remarks and Lemmas \ref{max 3} and \ref{lem2},

\[
n = 8m + (m-1) + (8m+2) = 17m+1.
\]
Thus, by Lemma \ref{sat}, $A$ is saturated.\\

Conversely, suppose that $A$ is saturated. Then $A$ must be a caterpillar. If it is not, by Theorem~\ref{thm2}, it would be the grafting of prime caterpillars with an appendix of at most two internal degree-3 tiles. In this case we would have 
\[
17i+2 \leq n \leq 17i+6
\] 
for some integer $i$, contradicting  Lemma \ref{sat}.

If there is no sequence of prime caterpillars $(C_k)_{k=1}^m$, with $m \geq 1$, that satisfies $A=\Diamond_{k=1}^m C_k$, since $A$ must be a caterpillar, either it is a proper subset of a prime caterpillar, or there exists a sequence of prime caterpillars $(D_k)_{k=1}^{m'}$, with $m'\geq 1$, such that 
\[
A=\Diamond_{k=1}^{m'} D_k \diamond E,
\]
where $E$ is a proper subcaterpillar of a prime caterpillar (by Theorem~\ref{thm2}). In the first case, by Lemma \ref{sat}, $A$ cannot be saturated. In the second case, there exists an integer $j\geq 1$ such that 
\[
17j+2 \leq n \leq 17(j+1),
\]
and then by Lemma \ref{sat}, $A$ cannot be saturated. 
\end{proof}

\section{Caterpillar graftings}
 
In this section, we only consider saturated caterpillars. We know that from a saturated caterpillar, any fully leafed induced subtree can be constructed by adding at most 17 tiles. The main goal of this section is to study how fully leafed induced caterpillars can be grafted together to construct larger ones. We are also interested in the geometry determined by the embedding of fully leafed induced caterpillars in P2 tilings.

\subsection{Prime caterpillars and HBS tilings}

We now study how prime caterpillars can be grafted together.

\begin{proposition} \label{prop: the two graftings}
    The only possible graftings of two prime caterpillars are those whose derived path is one of the path shown in Figure \ref{fig:greffages premiers HBS}.
\end{proposition}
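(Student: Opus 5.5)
The plan is a finite case analysis organized around the stars adjacent to prime caterpillars. By Lemma~\ref{lemme 4}, every prime caterpillar is one of $PC_1,\dots,PC_6$ up to isometry, and each is adjacent to a P2 star at its center. A grafting $C_1\diamond_t C_2$ of two prime caterpillars identifies a leaf $t$ of $C_1$ with a leaf of $C_2$, and $t$ becomes a degree-2 tile of the grafted caterpillar. The derived path of $C_1\diamond_t C_2$ is therefore the derived path of $C_1$, then $t$, then the derived path of $C_2$. So it suffices to enumerate, for each $PC_i$ and each of its end leaves, every way of placing a second prime caterpillar through that leaf.

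First, restrict where $t$ can sit. By the definition of grafting, and by Theorem~\ref{thm2} (the result must be a caterpillar, with no appendix), $t$ must be adjacent to an end tile of $C_1'$, that is, a leaf attached to an extremity of the derived path. This leaves a handful of candidate tiles per prime caterpillar; the light-green tiles of Figure~\ref{fig:chenilles premières} already mark them.

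Next, for each candidate $t$, I would determine the kingdom of $C_1\cup\{t\}$. I would then test, using the seven vertex neighbourhoods of Figure~\ref{fig:collages de Conway} and the matching rules, whether some $PC_j$ (in some orientation) can have $t$ as a leaf, lie otherwise outside $C_1$, be non-adjacent to $C_1\setminus\{t\}$, and sit next to its own star.

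The star observation drives this step. The second prime caterpillar needs a star adjacent to it near $t$, so its possible positions are governed by where stars can occur close to the first star. Stars are sparse in P2 tilings, and their relative positions are constrained by the inflation structure (compare the purple chains joining star centres in Figure~\ref{fig:Chenilles de Porrier}). Enumerating the legal positions of a second star relative to the first reduces the search to very few configurations. For each, I would either exhibit the grafting or show that it is blocked: a required tile is forced to be adjacent to two tree tiles (creating a cycle), or the kingdom forces a tile type incompatible with the caterpillar. The survivors should be exactly the derived paths of Figure~\ref{fig:greffages premiers HBS}. As a consistency check, they should match the graftings occurring inside the Porrier caterpillars and in Figure~\ref{fig:greffages par inflation}.

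The main obstacle is making this enumeration exhaustive and rigorous rather than figure-based. The difficulty is controlling the forced tiles near $t$: the kingdom of $C_1\cup\{t\}$ may not fully determine the region where $C_2$ lives. In that case I would branch on the possible vertex neighbourhoods at the undetermined vertices and use the forced-tile rules (kites and darts at each Conway vertex figure) to close each branch. Symmetry helps. Several $PC_i$ share the core caterpillar of Figure~\ref{fig:C6}, and the reflection symmetry of the star halves the cases.
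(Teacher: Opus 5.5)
Your plan is sound. Like the paper's proof, it is a finite exhaustive local search in the tiling, but you organize the search differently.

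The paper never treats the six prime caterpillars one by one. It uses the fact that all of them contain the same double-derived caterpillar $C''$ (Figure~\ref{fig:C6}), so at one end of $C$ there are only three ways, up to isometry, to add the end tile $t$ of $C'$. For each of these, it places the graft tile $u$ next to $t$ and grows the second caterpillar tile by tile from $u$. It abandons a branch as soon as that branch can no longer become a prime caterpillar. Finally, it checks that the two survivors actually occur, inside Porrier caterpillars.

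You instead start from each $PC_i$ and each end leaf. You prune placements of a whole $PC_j$ through the graft tile by asking whether its star can sit where that placement requires. This ties the case analysis directly to what the conclusion is about, namely the segment joining the two star centres, i.e.\ the future HBS edge. However, it needs an ingredient the paper avoids: an explicit list of the possible positions of a second star within the relevant distance of the first. You only assert that this list is short. The paper's route is uniform over all six prime caterpillars and needs no global information about star placement, at the price of a longer tile-by-tile case tree.

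One justification should be tightened. Theorem~\ref{thm2} allows an appendix, so by itself it does not say that the shared tile sits at an end of $C_1'$; you would need an extra count of internal tiles. The direct argument is Lemma~\ref{app 2}. If the shared tile were not adjacent to a leaf of $C_1'$, then $C_2$ would be an appendix of $C_1$, and so would have at most two internal tiles rather than eight. Apply the same lemma with $C_1$ and $C_2$ exchanged. Your description of the derived path as $C_1'$, then $t$, then $C_2'$ also needs $t$ to be adjacent to an end of $C_2'$.
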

\begin{proof}
    Let $C$ be any prime caterpillar. Then from Figure \ref{fig:chenilles premières}, we see that there are three possible choices of tile for each leaf of $C'$. These three possibilities are illustrated for one such leaf in Figure \ref{fig:Les trois manières de prolonger C'' en une chenille d'ordre 7}.

    \begin{figure}[H]
    \centering
    \begin{subfigure}{0.14\textwidth}
        \centering
        \includegraphics[width=\textwidth]{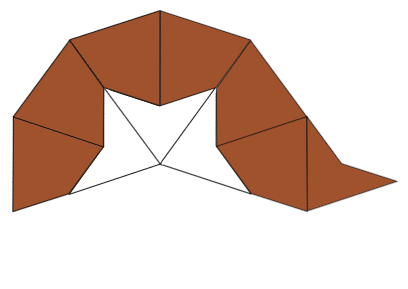}
        \caption{Case 1}
    \end{subfigure} \hspace{25pt}
    \begin{subfigure}{0.14\textwidth}
        \centering
        \includegraphics[width=\textwidth]{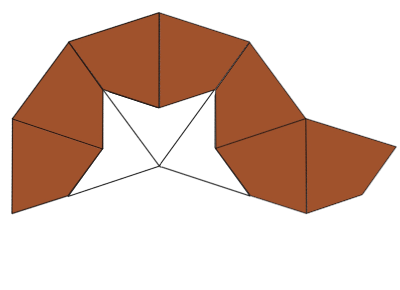}
        \caption{Case 2}
    \end{subfigure} \hspace{25pt}
    \begin{subfigure}{0.14\textwidth}
        \centering
        \includegraphics[width=\textwidth]{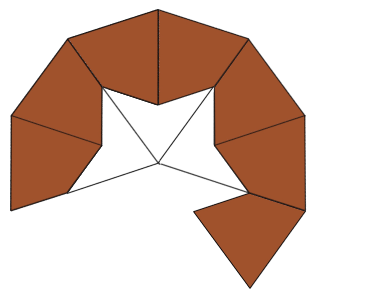}
        \caption{Case 3}
    \end{subfigure}
    \caption{The three ways of extending $C''$ in a caterpillar of order 7 (up to isometry)}
    \label{fig:Les trois manières de prolonger C'' en une chenille d'ordre 7}
\end{figure}

Next, for each of the three ways of adding a seventh tile $t$ to $C''$ to construct a caterpillar of order 7, we consider in Figure \ref{fig:greffages de deux chenilles premières} all possible ways of adding a tile $u$ adjacent to $t$ such that $u$ is a degree-2 tile in every caterpillar obtained by grafting $C$ to another prime caterpillar. In each case, the possible extensions arise by grafting $C$ at the tile $u$.\\

\begin{figure}[H]
    \centering
    \begin{subfigure}{1\textwidth}
        \centering
        \includegraphics[width=\textwidth]{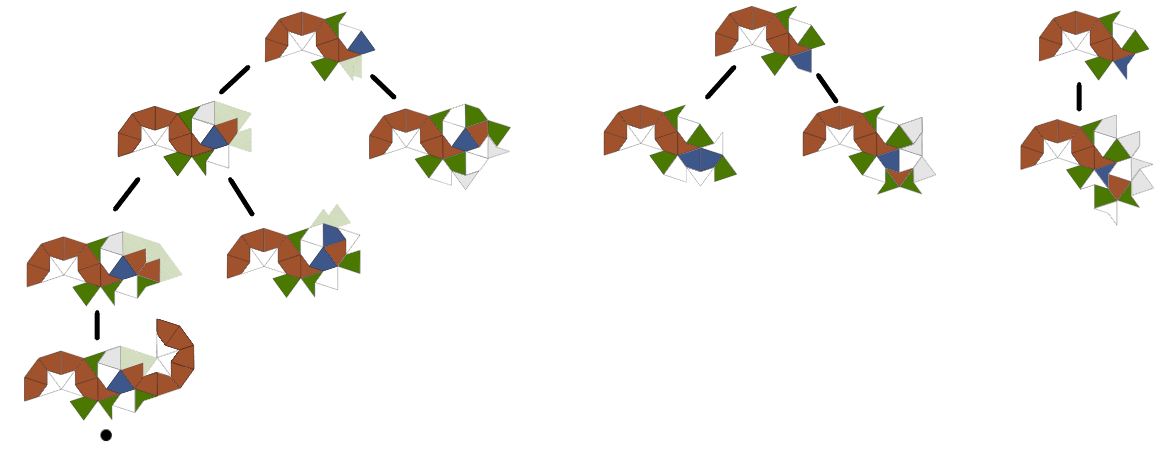}
        \caption{Extensions in Case 1}
    \end{subfigure} 
\end{figure}
\begin{figure}[H]
    \ContinuedFloat
    \centering
    \begin{subfigure}{1\textwidth}
        \centering
        \includegraphics[width=\textwidth]{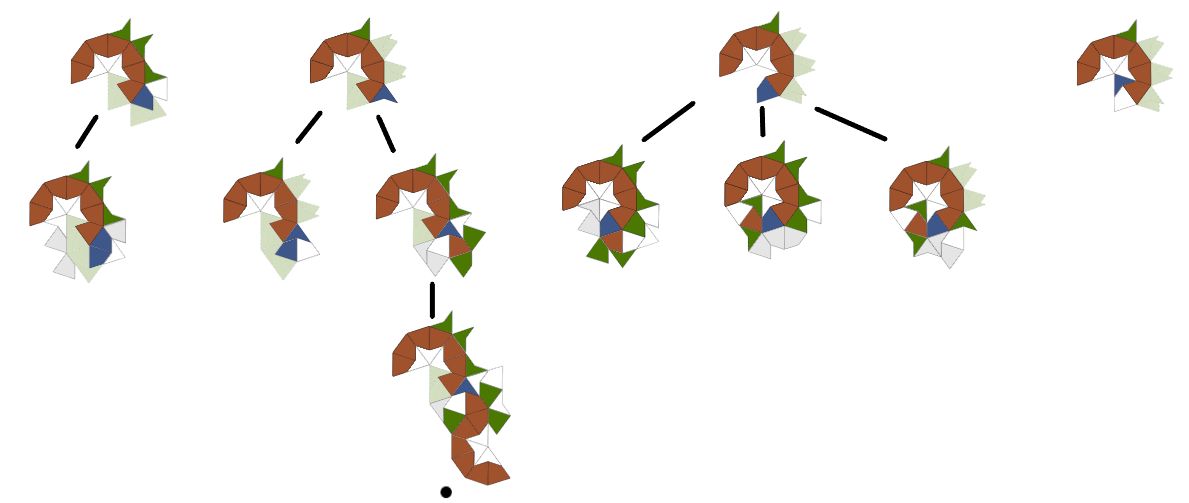}
        \caption*{(b) Extensions in Case 2}
    \end{subfigure}
    \begin{subfigure}{0.4\textwidth}
        \centering
        \includegraphics[width=\textwidth]{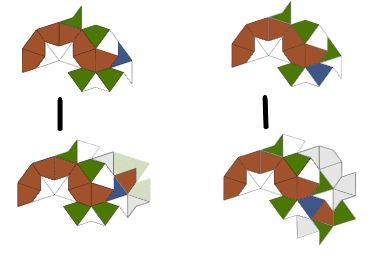}
        \caption*{(c) Extensions in Case 3}
    \end{subfigure}
    \caption{Study of the possible extensions of $C''$ to obtain a grafting of two prime caterpillars. Light-grey tiles are arbitrary tiles of the tiling that do not belong to the fully leafed induced subtree. For each case, the first row shows all possible placements of tile $u$ (in blue). Below each fully leafed induced subtree, we present its possible extensions that may lead to a grafting of two prime caterpillars. When the caterpillar grafted at $u$ to $C'' \cup \{t\} \cup \{u\}$ cannot be extended into a prime caterpillar, we stop the extension attempts. We also omit graftings of two prime caterpillars that are isometric to an already identified one.}
    \label{fig:greffages de deux chenilles premières}
\end{figure}

Finally, the two retained graftings (those marked with a black dot in Figure \ref{fig:greffages de deux chenilles premières}) do belong to a P2 tiling (see Figure \ref{fig:Chenilles de Porrier}).\\

Thus, the only possible graftings of two prime caterpillars are those whose derived caterpillar is one of the caterpillars shown in Figure \ref{fig:greffages premiers HBS}.
\begin{figure}[H]
    \centering
    \begin{subfigure}{0.3\textwidth}
        \centering
        \includegraphics[width=\textwidth]{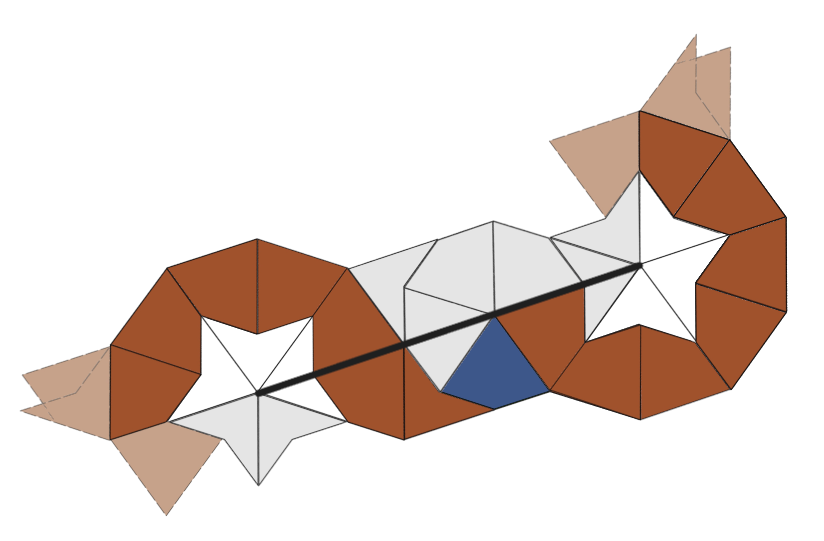}
        \caption{}
    \end{subfigure} 
    \begin{subfigure}{0.25\textwidth}
        \centering
        \includegraphics[width=0.96\textwidth]{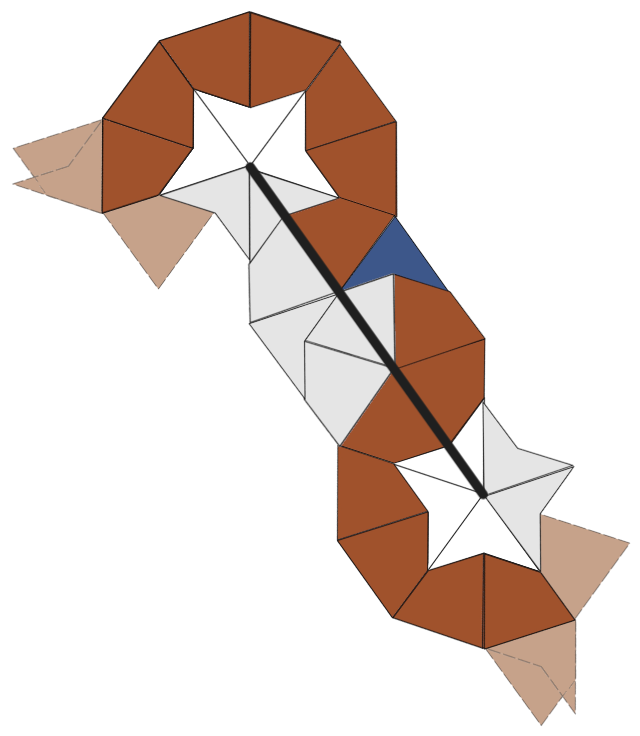}
        \caption{}
    \end{subfigure}
    \caption{The two derived paths of graftings of two prime caterpillars. Light-brown tiles are the possible degree-3 tiles for the grafting. Grey tiles are tiles that could belong to the caterpillar but not to its derived path. Brown, blue and white tiles are interpreted as before. We added a segment that joins the centers of the stars adjacent to the prime caterpillars.}
    \label{fig:greffages premiers HBS}
\end{figure}
\end{proof}
If we link all the stars of a P2 tiling as shown in Figure \ref{fig:greffages premiers HBS}, we obtain a new tiling whose tiles have these line segments as edges. This new family of tilings is called the HBS tilings (\textit{hexagons, boats and stars} tilings). For further details on these tilings, we refer the reader to \cite{porrier2023hbs}. Figure \ref{fig:HBS} illustrates a region of such a tiling, and Figure \ref{fig:protuiles HBS} shows the types of tiles in this family.

\begin{figure}[H]
    \centering
    \includegraphics[width=0.8\textwidth]{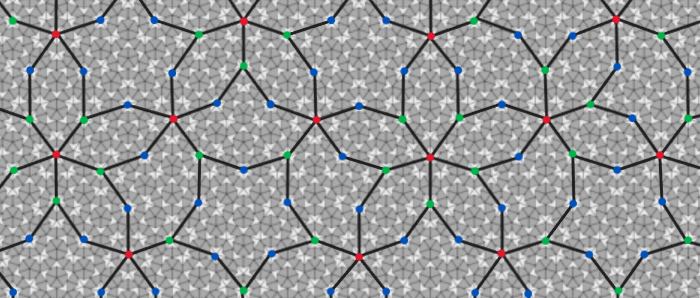}
    \caption{A region of an HBS tiling superimposed on a P2 tiling. The thick black edges are the edges of the HBS tiling and they correspond to the segments that join two stars as in Figure \ref{fig:greffages premiers HBS}. Vertices at the center of a P2 star are color-coded red, green, and blue depending on whether the star is adjacent to 0, 1, or 2 suns, respectively.}
    \label{fig:HBS}
\end{figure}

\begin{figure}[H]
    \centering
    \begin{tabular}{@{}ccccc@{}}
    \includegraphics[width=0.15\textwidth]{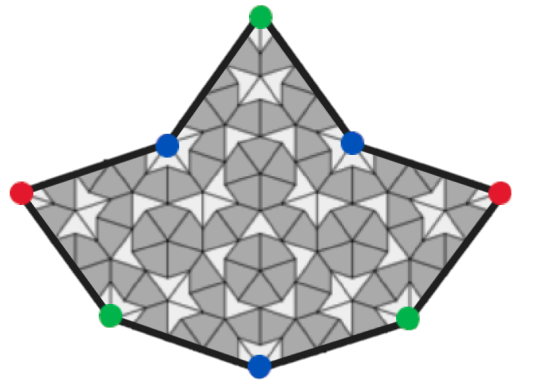} &
    \includegraphics[width=0.08\textwidth]{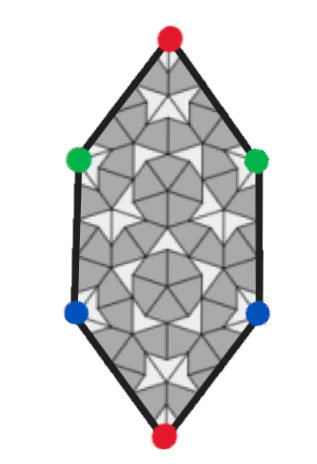} &
    \includegraphics[width=0.15\textwidth]{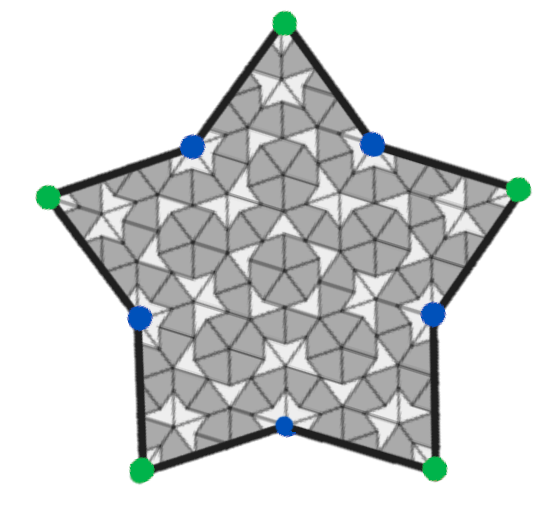} &
     \label{fig:protuiles_S1}
    \raisebox{-0.2mm}{\includegraphics[width=0.15\textwidth]{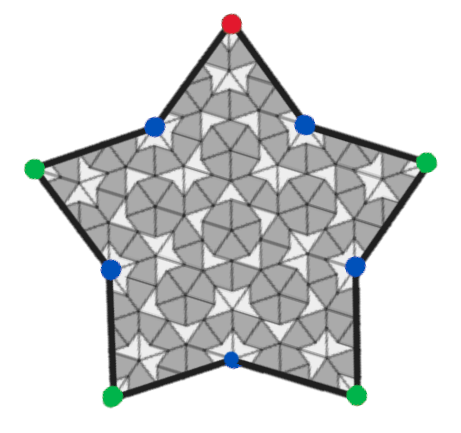}} &
    \raisebox{-0.2mm}{\includegraphics[width=0.15\textwidth]{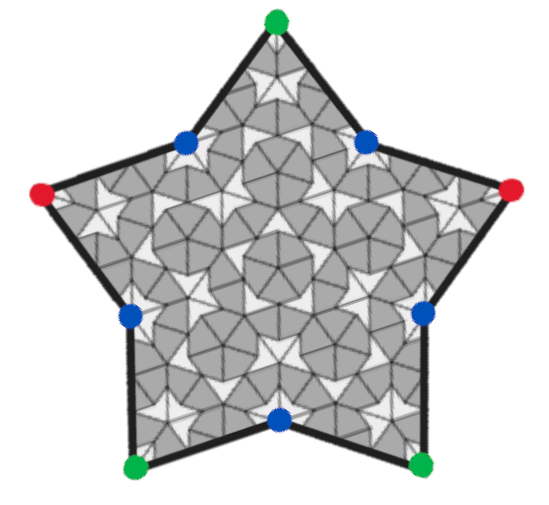}} \\
    \small{(a) Boat} &
    \small{(b) Hexagon} &
    \small{(c) Star of type 0 (S0)} &
    \small{(d) Star of type 1 (S1)} &
    \small{(e) Star of type 2 (S2)}
    \end{tabular}
    \caption{Tile types of HBS tilings. We have filled the interior of each tile with tiles or parts of tiles from P2 tilings. To match HBS tiles together, the colors of the vertices must match. An HBS star is of type 0, 1, or 2 depending on whether it has 0, 1, or 2 red vertices, respectively, on its border.}
    \label{fig:protuiles HBS}
\end{figure}

\begin{figure}[H]
    \centering
    \begin{subfigure}{0.15\textwidth}
        \centering
        \hspace*{1em}\includegraphics[width=\textwidth, trim=0cm 0.3cm 0cm 0cm, clip]{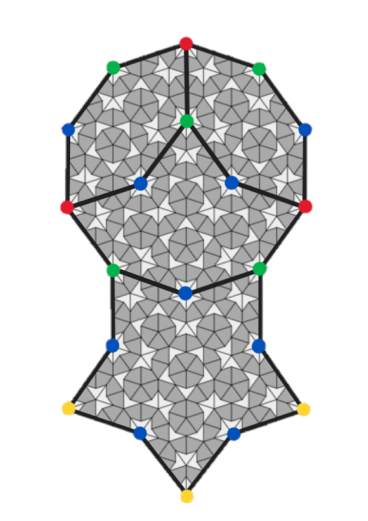}
        \captionsetup{
    width=6\textwidth, justification=justified,
    singlelinecheck=false
  }
        \caption{Kingdom of boat}
    \end{subfigure} \hspace{30pt}
    \begin{subfigure}{0.07\textwidth}
        \centering
        \hspace*{3.2em}\includegraphics[width=\textwidth, trim=0cm 0.2cm 0cm 0cm, clip]{Figures/HBS_tiles_and_kingdoms/Hexagone.PNG}
         \captionsetup{
    width=6\textwidth, justification=justified,
    singlelinecheck=false
  }
        \caption{Kingdom of hexagon}
    \end{subfigure} \hspace{60pt}
    \begin{subfigure}{0.28\textwidth}
        \centering
        \includegraphics[width=\textwidth]{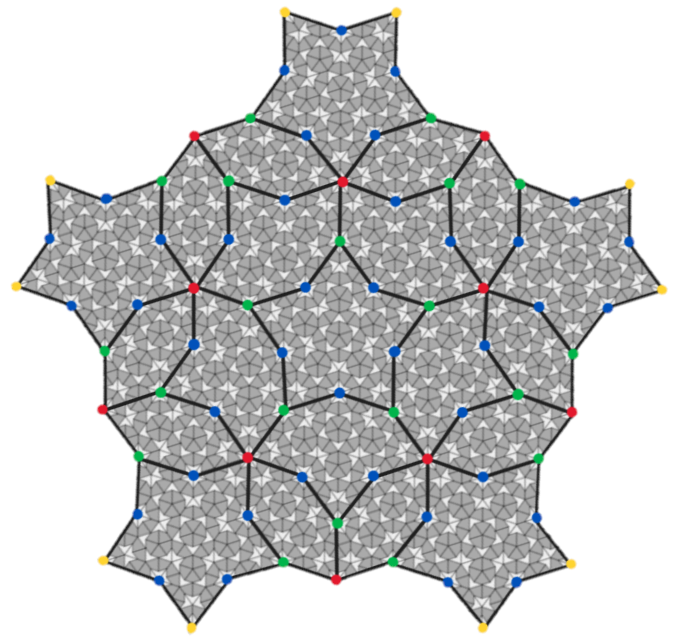}
        \caption{Kingdom of S0}
    \end{subfigure}\hfill
    \begin{subfigure}{0.41\textwidth}
        \centering
        \includegraphics[width=\textwidth]{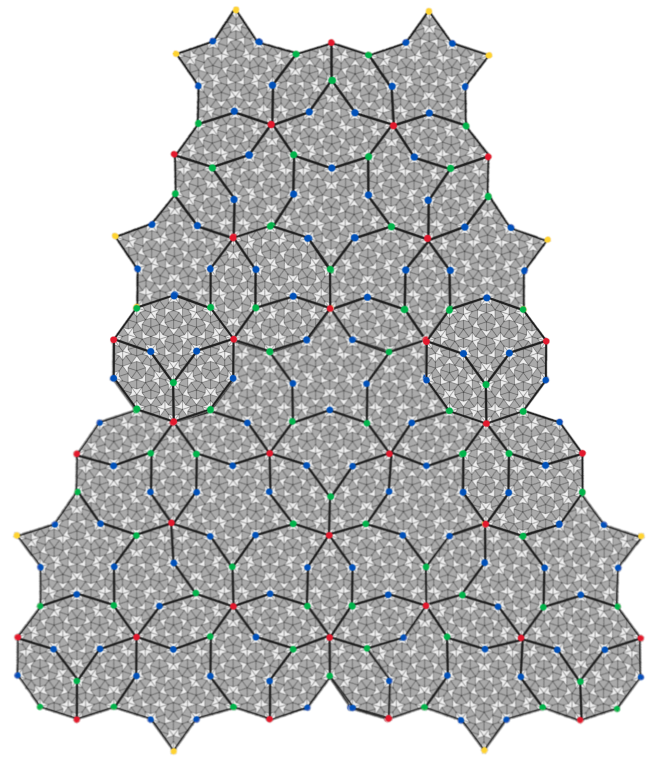}
        \caption{Kingdom of S1}
    \end{subfigure}\hspace{20pt}
    \begin{subfigure}{0.33\textwidth}
        \centering
        \includegraphics[width=\textwidth]{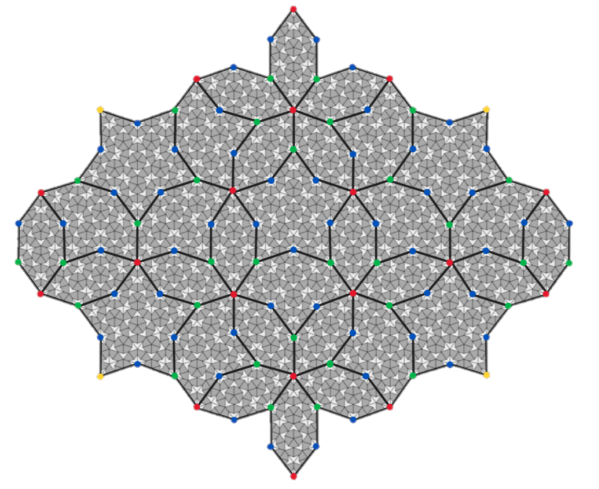}
        \caption{Kingdom of S2}
    \end{subfigure}
    \caption{The five kingdoms of tile types in HBS tilings. Yellow vertices are vertices whose actual color is undetermined. We will use this color code for the rest of the paper.}
    \label{fig:royaumes HBS}
\end{figure}

Figure \ref{fig:royaumes HBS} shows the kingdoms of those tiles. We will use these kingdoms to build large HBS patches.

\begin{definition}
The graph with vertices at the center of stars in a P2 tiling and with edges that are line segments joining them as in Figure \ref{fig:greffages premiers HBS} is called a \textbf{star-graph}.
\end{definition}

From Figure \ref{fig:greffages premiers HBS}, we can see that every prime caterpillar determines two connected edges of a star-graph. We illustrate this in Figure \ref{fig:chenilles premières - angles}. In geometry, an angle is determined by two segments that have the same origin. Since each prime caterpillar determines such two segments, we can directly define the angle determined by a prime caterpillar.

\begin{definition} \label{def:angle}
    For a given prime caterpillar $PC_k$, we call the \textbf{angle of} $PC_k$ the pair of two star-graph adjacent edges it determines, as illustrated in Figure \ref{fig:chenilles premières - angles}.
\end{definition}

For a given prime caterpillar $C$, we see that the angle $A$ of $C$ splits the tiles of $C'$ into two sets of tiles: the ones that are on one given side of the angle $A$, and the other ones. For any prime caterpillar $C$, we can see in Figure \ref{fig:chenilles premières - angles} that the internal tiles of $C''$ are all on the same side of the angle. According to this remark, we can introduce the following definition.

\begin{definition}
    For a given prime caterpillar $C$ and $A$ its angle, we say that $C$ \textbf{lies on side} $s$ of $A$, where $s$ is a given side of $A$, if the internal tiles of $C''$ are on side $s$.
\end{definition}

Since an angle admits two possible measures, we always measure the angle of a prime caterpillar on the side on which the prime caterpillar lies. All angles are measured in radians.

\begin{figure}[H]
    \centering
    \begin{subfigure}{0.16\textwidth}
        \centering
        \includegraphics[width=\textwidth]{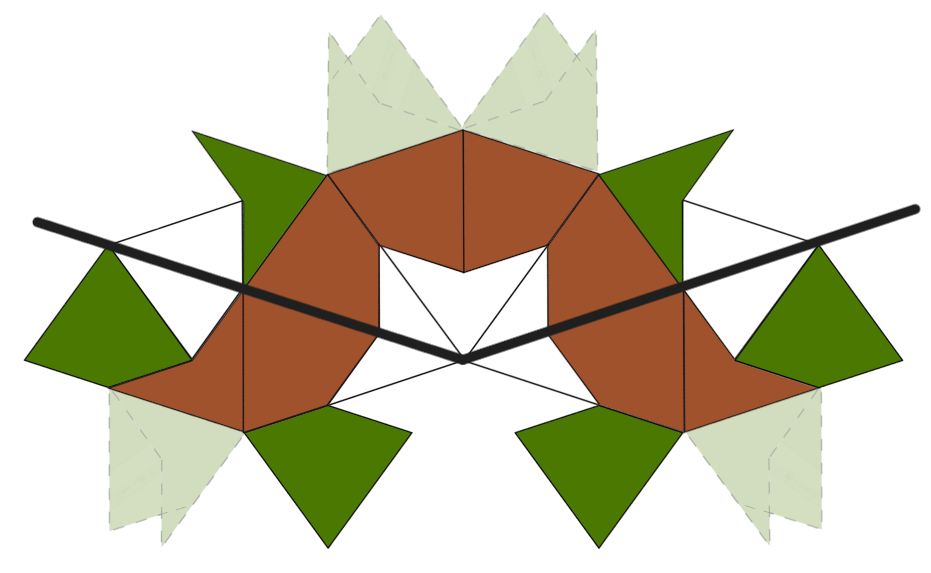}
        \caption{$PC_1$}
        \label{fig:CP1 - angle}
    \end{subfigure} \hfill
    \begin{subfigure}{0.15\textwidth}
    \includegraphics[width=\textwidth, trim=0cm 0.4cm 0cm 0cm, clip]{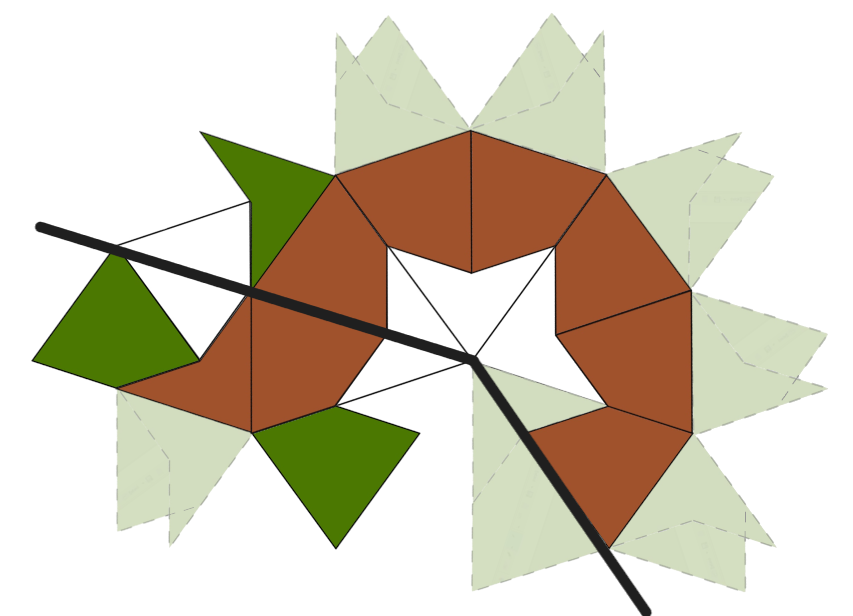}
    \caption{$PC_2$}
    \label{fig:CP2 - angle}
    \end{subfigure} \hfill
    \begin{subfigure}{0.16\textwidth}
    \includegraphics[width=\textwidth, trim=0cm 0cm 0cm 0cm, clip]{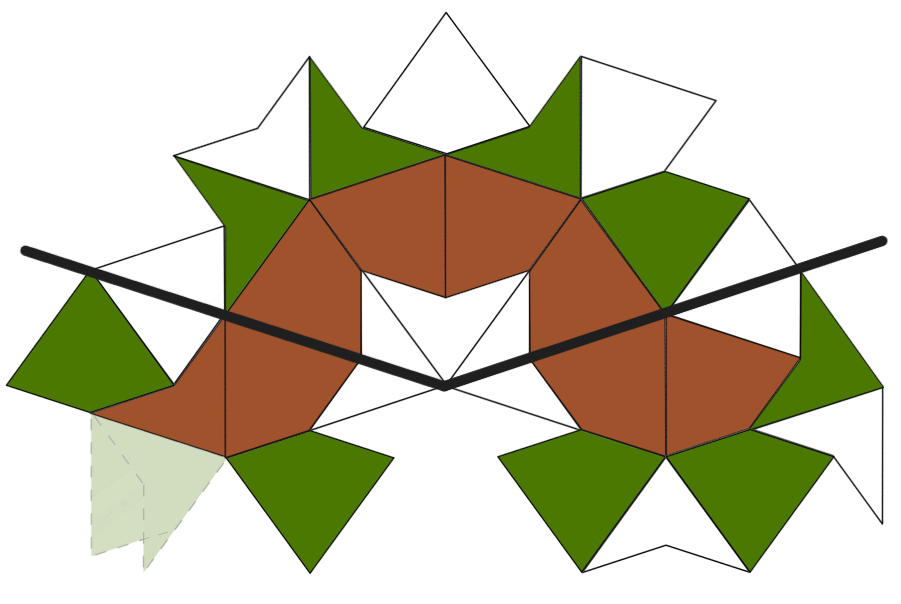}
    \caption{$PC_3$}
    \label{fig:CP3 - angle}
    \end{subfigure} \hfill
    \begin{subfigure}{0.15\textwidth}
    \includegraphics[width=\textwidth, trim=0cm 0.5cm 0cm 0cm, clip]{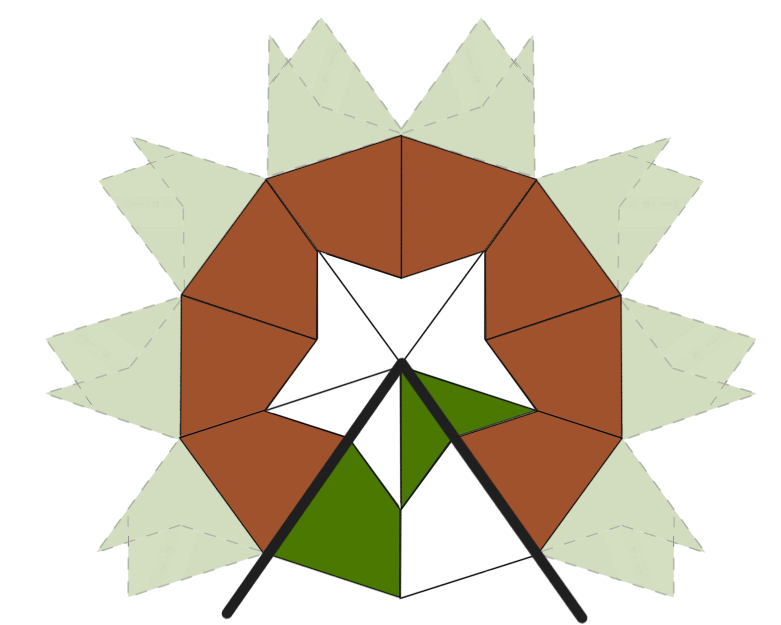}
    \caption{$PC_4$}
    \label{fig:CP4 - angle}
    \end{subfigure} \hfill
    \begin{subfigure}{0.16\textwidth}
    \includegraphics[width=\textwidth]{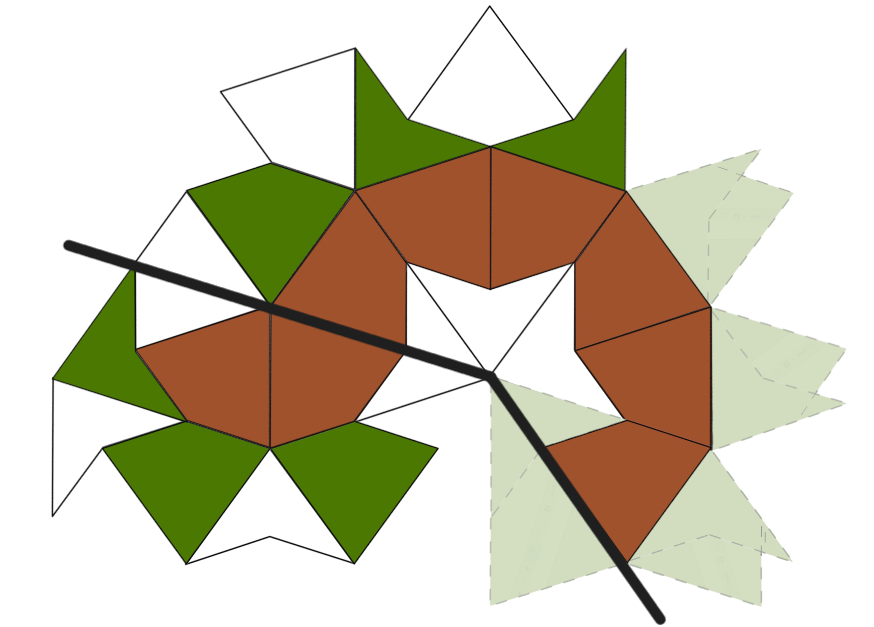}
    \caption{$PC_5$}
    \label{fig:CP5 - angle}
    \end{subfigure} \hfill
    \begin{subfigure}{0.16\textwidth}
    \includegraphics[width=\textwidth, trim=0.5cm 0cm 0cm 0cm, clip]{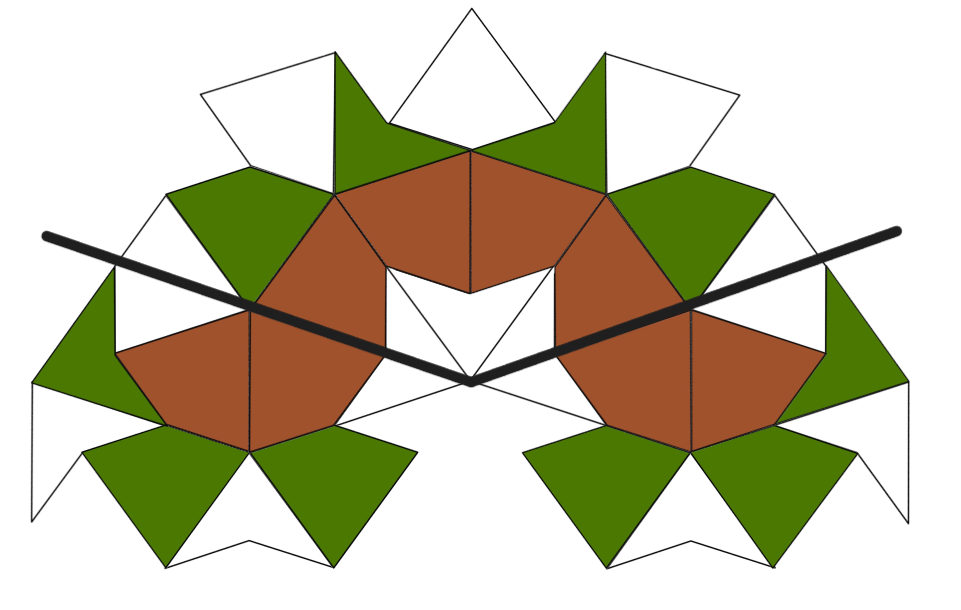}
    \caption{$PC_6$}
    \label{fig:CP6 - angle}
    \end{subfigure} \hfill
    \caption{The six prime caterpillars decorated with the angle they determine}
    \label{fig:chenilles premières - angles}
\end{figure}

\begin{lemma}\label{prop:angle}
The measures of the angles of the prime caterpillars are given by the following claims:
\begin{enumerate}
    \item $PC_1$, $PC_3$ and $PC_6$ determine an angle of $\frac{4\pi}{5}$;
    \item $PC_2$ and $PC_5$ determine an angle of $\frac{6\pi}{5}$;
    \item $PC_4$ determines an angle of $\frac{8\pi}{5}$.
\end{enumerate}
\end{lemma}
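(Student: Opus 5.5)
The plan is to compute each angle directly from the geometry of the P2 tiling, using the fact that all edges of P2 tiles point in directions that are integer multiples of $\frac{\pi}{5}$.

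\textbf{Step 1: the star-graph edges.} Each prime caterpillar $PC_k$ is adjacent to a central star, by the observation following Figure \ref{fig:chenilles premières}. Its angle consists of the two star-graph edges leaving the center $O$ of that star toward the centers of the neighbouring stars. These are the stars adjacent to the prime caterpillars grafted on either side, as in Figure \ref{fig:greffages premiers HBS}. The star has fivefold symmetry about $O$, and each star-graph edge leaving $O$ runs along one of the ten directions $\frac{j\pi}{5}$. I would check this on the two derived paths of Figure \ref{fig:greffages premiers HBS}. In each, the segment joining two star centers passes through a dart (or along a kite axis) of the central star and is collinear with a symmetry axis of that star. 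So the angle at $O$ is a multiple of $\frac{2\pi}{5}$, i.e.\ $\frac{2\pi}{5}$, $\frac{4\pi}{5}$, $\frac{6\pi}{5}$ or $\frac{8\pi}{5}$ once measured on a chosen side.

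\textbf{Step 2: count darts between the edges.} For each $PC_k$, I take the two terminal tiles of $C'$, that is, the tiles where grafting occurs. From the picture in Figure \ref{fig:chenilles premières - angles}, I identify which of the five darts of the central star each star-graph edge passes through. If the edges pass through darts $i$ and $i'$, the angle is $\frac{2\pi}{5}$ times the number of dart-steps from $i$ to $i'$, counted on the side where the internal tiles of $C''$ lie.

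The caterpillar of Figure \ref{fig:C6} is common to all prime caterpillars and wraps around the three white central darts. So $C''$ occupies a fixed region on one side of the star, and the two endpoints of $C'$ extend from this core in one of the directions given by Cases 1--3 of Figure \ref{fig:Les trois manières de prolonger C'' en une chenille d'ordre 7}. Each case determines which dart the outgoing edge crosses. Combining the choices for the two ends then gives the six prime caterpillars. I expect these combinations to produce the stated values:
\begin{itemize}
    \item $PC_1$, $PC_3$ and $PC_6$ span two dart-steps, giving $\frac{4\pi}{5}$;
    \item $PC_2$ and $PC_5$ span three dart-steps, giving $\frac{6\pi}{5}$;
    \item $PC_4$ spans four dart-steps, giving $\frac{8\pi}{5}$.
\end{itemize}

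\textbf{Step 3: fix the side.} For each case, I confirm that the measured side is the one containing the internal tiles of $C''$. This resolves the ambiguity between $\theta$ and $2\pi-\theta$; for example, it separates $\frac{4\pi}{5}$ from $\frac{6\pi}{5}$.

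\textbf{Main obstacle.} The hardest part is Step 1: making rigorous that every star-graph edge is aligned with a symmetry axis of the star, and identifying the correct dart for each of the end configurations. I would first rely on the local patches of Figure \ref{fig:greffages premiers HBS}. There the segment joins two star centers whose relative position is fixed by the kingdom, and one can compute it with coordinates in $\mathbb{Z}[\varphi]$ along the directions $e^{ij\pi/5}$. The remaining steps are a finite inspection of six configurations.
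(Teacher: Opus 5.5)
Your proposal follows essentially the paper's route. The paper says only that the star kingdom is bounded by a regular decagon, which fixes the possible directions at $O$, and then reads the three values off the figure of the prime caterpillars decorated with their angles; your Steps~2--3, counting darts on the side of $C''$, are a more explicit version of that inspection. One point in Step~1 needs tightening: being collinear with a symmetry axis of the star only gives multiples of $\pi/5$, since there are ten such rays, so the restriction to multiples of $2\pi/5$ comes from each star-graph edge passing through a dart, which your dart-step count already assumes — state it that way and drop the ``kite axis'' alternative.
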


\begin{proof}
The boundary of a star kingdom in a P2 tiling is a regular decagon. Figure \ref{fig:chenilles premières - angles} then completes the proof.
\end{proof}

\begin{definition}
    If $\mathcal{C}$ is a fully leafed caterpillar obtained by grafting prime caterpillars, the \textbf{HBS chain of} $\mathcal{C}$ is the path (in the star-graph) obtained by concatenating the star-graph edges determined by the prime caterpillars in $\mathcal{C}$.
\end{definition}

Let $\mathcal{C}$ be a fully leafed caterpillar in a $P2$ tiling $P$ obtained by grafting prime caterpillars, let $h$ be the HBS chain of $\mathcal{C}$ and let $H$ be any bi-infinite path extension of $h$ in the star-graph of $P$ that does not self-intersect. Then $H$ splits $\mathbb{R}^2$ in two regions. Thus, we can easily speak of one side or the other of $H$, whereas there is a priori an ambiguity to identify one side or the other of $h$, since $h$ can be a finite chain in a star-graph, which is an infinite graph. But since $H$ is an extension of $h$ that does not self-intersect, no matter how $H$ is constructed from $h$, for any prime caterpillar $C$ in $\mathcal{C}$, the side of $C$ determined from its angle is always included in the side of $H$ where the internal tiles of $C''$ lie. Thus, the choice of $H$ does not matter to determine on which side of $h$ the prime caterpillar $C$ is, so we can present the following definition without ambiguity.

\begin{definition} \label{def: côté}
    Let $\mathcal{C}$ be a fully leafed caterpillar obtained by grafting prime caterpillars, and let $h$ the HBS chain of $\mathcal{C}$. For $C$ a prime caterpillar in $\mathcal{C}$, we say that $C$ \textbf{lies on side} $s$ of $h$ if the internal tiles of $C''$ are on side $s$ of $H$, where $s$ is a given side of a given bi-infinite chain extension $H$ of $h$ that does not self-intersect, in a given star-graph.
\end{definition}

To prevent ambiguity, we had to add details to the previous definitions which might make them more difficult to understand. But in fact, these definitions are quite intuitive. Example \ref{exemple} illustrates the preceding notions we have just introduced.

\begin{example} \label{exemple}
    Let $\mathcal{C}$ be the fully leafed caterpillar presented in Figure \ref{fig:0}. This caterpillar is obtained by grafting 7 prime caterpillars. We denote them $C_{-3}$, $C_{-2}$, $\ldots$, $C_2$ and $C_3$ respectively in the order from left to right in which they appear in Figure \ref{fig:0}. The HBS chain of $\mathcal{C}$ is the purple chain. The prime caterpillars $C_{-3}$, $C_{-1}$, $C_1$ and $C_3$ lie on the same side of the HBS chain of $\mathcal{C}$, while the prime caterpillars $C_{-2}$, $C_0$ and $C_2$ lie on the other side. The prime caterpillar $C_0$ determines an angle of $\frac{8\pi}{5}$, the prime caterpillars $C_{-2}$, $C_{-1}$, $C_1$ and $C_2$ determine an angle of $\frac{6\pi}{5}$ and the prime caterpillars $C_{-3}$ and $C_3$ determine an angle of $\frac{4\pi}{5}$.

    \begin{figure}[H]
    \centering
    \includegraphics[width=0.4\textwidth]{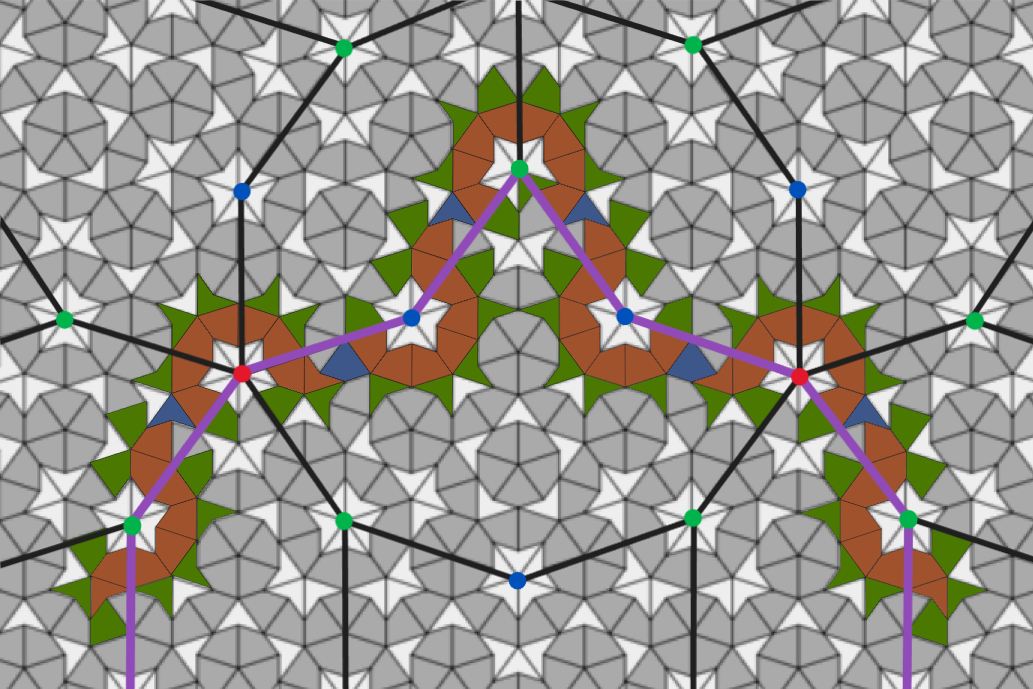}
    \caption{A fully leafed caterpillar embedded in an HBS tiling. The HBS chain of the caterpillar is shown in purple.}
    \label{fig:0}
\end{figure}
\end{example}

We now have a corollary of Proposition \ref{prop: the two graftings} that will be useful later to simplify representations of large fully leafed caterpillars.

\begin{corollary} \label{prop:serpente} 
Let $\mathcal{C}=\Diamond_{i=n}^m C_i$ be a fully leafed caterpillar, with $n,m\in \mathbb{Z}$ and $m>n$, where each $C_k$ for $k\in\{n,\dots,m\}$ is a prime caterpillar. Then for every $j\in\{n,\dots,m-1\}$, $C_j$ and $C_{j+1}$ lie on opposite sides of the HBS chain of $\mathcal{C}$.
\end{corollary}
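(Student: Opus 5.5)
The corollary is local: its statement only involves consecutive pairs $C_j, C_{j+1}$. I would therefore reduce it to the case $m=n+1$, a grafting of exactly two prime caterpillars, and then read off the answer from the classification in Proposition \ref{prop: the two graftings}.

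\textbf{Step 1: the sub-grafting $C_j \diamond C_{j+1}$ is itself a legal grafting of two prime caterpillars.} Fix $j\in\{n,\dots,m-1\}$. The subtree $C_j\diamond C_{j+1}$ of $\mathcal{C}$ is an induced subtree of the P2-graph. It is a grafting of two prime caterpillars at a tile $t$ that has degree $2$ in $\mathcal{C}$. By the first part of the proof of Proposition \ref{prop3}, it has order $35=17\cdot 2+1$, so by Lemma \ref{sat} and Corollary \ref{cor 1} it is fully leafed. Hence Proposition \ref{prop: the two graftings} applies, and its derived path is one of the two paths of Figure \ref{fig:greffages premiers HBS}.

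\textbf{Step 2: check the two configurations.} In each of the two configurations of Figure \ref{fig:greffages premiers HBS}, I would locate the internal tiles of $C_j''$ and of $C_{j+1}''$ relative to the segments joining the three star centers involved. These segments are the angle of $C_j$ followed by the angle of $C_{j+1}$, which share the middle edge. In both pictures, the two double-derived cores sit on opposite sides of this three-edge polyline. The reason is that the grafting tile $u$ crosses the shared star-graph edge: the tile $t$ continuing $C_j$ leaves its core and passes over the middle segment before $C_{j+1}$'s core starts.

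\textbf{Step 3: pass from the local polyline to the HBS chain $h$ of $\mathcal{C}$.} Take any non-self-intersecting bi-infinite extension $H$ of $h$. The three-edge polyline is a subpath of $H$. The side of $C_j$ is decided by its angle, and by the remark preceding Definition \ref{def: côté} it coincides with the side of $H$. So opposite sides of the local polyline give opposite sides of $H$, hence of $h$.

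\textbf{Main obstacle.} The obstacle is Step 2 combined with Step 3. Step 2 has to be justified geometrically rather than just read from a picture. Step 3 has to ensure that the two angles, measured on their own sides, are glued along the common edge so that the sides genuinely differ. I would handle this by observing that each angle lies strictly inside its star's decagonal kingdom, using Lemma \ref{prop:angle}. The transversal crossing of the middle edge by the derived path at the grafting tile then forces the two cores into the two distinct components of the complement of $H$ near that edge.
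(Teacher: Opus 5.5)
Your proposal is correct and follows the paper's own argument. The paper proves the corollary simply by reading it off Figure~\ref{fig:greffages premiers HBS}, i.e.\ by applying the classification of graftings of two prime caterpillars (Proposition~\ref{prop: the two graftings}) to each consecutive pair; that is your Steps 1--2, and your Step 3 only makes explicit the side-independence remark the paper states just before it defines sides of an HBS chain.

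One minor correction: Step 1 needs no order count, and Lemma~\ref{sat} presupposes full leafedness rather than establishing it. The notation $\Diamond_{i=n}^m C_i$ already means each $C_j\diamond C_{j+1}$ is a grafting, hence fully leafed by definition.
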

 
\begin{proof}
This follows directly from Figure~\ref{fig:greffages premiers HBS}.
\end{proof}

\begin{lemma}\label{lem:4 pi sur 5}
Let $\mathcal{C}=\Diamond_{i=n}^m C_i$ be a fully leafed caterpillar, where $n,m\in \mathbb{Z}$, $m>n+1$ and for every $k\in\{n,\ldots,m\}$, $C_k$ is a prime caterpillar. If $j\in\{n,\ldots, m\}$ is such that $C_j$ determines an angle of $\frac{4\pi}{5}$, then the leaves of $C_j$ that are adjacent to the leaves of $C_j'$, (i.e. the tiles where a grafting of prime caterpillars can occur), both lie on the side of the HBS chain where $C_j$ does not lie.
\end{lemma}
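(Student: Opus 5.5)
The plan is to reduce the statement to a finite check on the three prime caterpillars of angle $\frac{4\pi}{5}$, namely $PC_1$, $PC_3$ and $PC_6$ (Lemma \ref{prop:angle}). Fix $j$ such that $C_j$ determines an angle of $\frac{4\pi}{5}$, and let $e_1,e_2$ be the two star-graph edges forming this angle. By Definition \ref{def: côté}, the side on which $C_j$ lies is the side of any non-self-intersecting bi-infinite extension $H$ of the HBS chain that contains the internal tiles of $C_j''$. Near the common star center of $e_1$ and $e_2$, this side is the angular sector of measure $\frac{4\pi}{5}$ bounded by $e_1$ and $e_2$. The opposite side is the complementary sector, of measure $\frac{6\pi}{5}$.

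The key step is to locate, for each of $PC_1$, $PC_3$ and $PC_6$, the two leaves of $C_j$ adjacent to the leaves of $C_j'$. These are the tiles where graftings occur, and by Proposition \ref{prop: the two graftings} they become degree-2 tiles. I would show that each of them lies in the complementary $\frac{6\pi}{5}$ sector, near the far endpoint of $e_1$ or $e_2$. The approach is to read this off Figure \ref{fig:chenilles premières - angles} together with Figure \ref{fig:greffages premiers HBS}, which shows the derived paths of the two possible graftings. In each of the two graftings, the grafting tile (blue) sits at a definite position relative to the star-graph edge joining the two adjacent star centers. Transporting this position onto the angle of $C_j$ should place the blue tile on the side opposite to the internal tiles of $C_j''$. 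The hypothesis $m>n+1$ guarantees that $C_j$ is grafted on both ends, or at least that the HBS chain through $C_j$ is long enough for both grafting tiles to be well defined relative to it.

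Next, I would argue that the local picture transfers to the global side of $H$. This mirrors the argument preceding Definition \ref{def: côté}: since $H$ does not self-intersect, the sector near the vertex of the angle is contained in the corresponding side of $H$. Each grafting tile lies in the kingdom of the star (a regular decagon) or of the adjacent star along $e_1$ or $e_2$, so it is close to $H$ and on a determined local side of it. Corollary \ref{prop:serpente} provides a consistency check: $C_{j\pm1}$ lie on the side opposite to $C_j$, and the grafting tile, being shared with $C_{j\pm1}$, lies among tiles of $C_{j\pm1}$ on that side. This supports the conclusion.

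I expect the main obstacle to be precisely this grafting-tile step. The claim genuinely depends on the angle being $\frac{4\pi}{5}$. For $\frac{6\pi}{5}$ or $\frac{8\pi}{5}$, the wider sector presumably contains one or both grafting tiles on the caterpillar's own side, so the argument must use the specific geometry of $PC_1$, $PC_3$ and $PC_6$ rather than a general principle. I would carry out the check first on one grafting shape from Figure \ref{fig:greffages premiers HBS}, then verify that the isometries relating $PC_1$, $PC_3$ and $PC_6$ preserve the relevant side.
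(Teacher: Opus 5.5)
Your proposal is correct and follows the paper's approach: Lemma \ref{prop:angle} restricts attention to $PC_1$, $PC_3$ and $PC_6$, and the position of the two grafting tiles relative to the angle is read directly off Figure \ref{fig:chenilles premières - angles}, which is all the paper's proof does; your local-to-global step is the justification the paper gives before Definition \ref{def: côté}. One small inaccuracy: $m>n+1$ does not ensure that $C_j$ is grafted at both ends when $j\in\{n,m\}$, but this is harmless because the claim involves only the potential grafting tiles and the angle of $C_j$, which depend on $C_j$ alone.
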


\begin{proof}
This follows from Figure~\ref{fig:chenilles premières - angles} and Lemma \ref{prop:angle}.
\end{proof}

\begin{lemma} \label{prop:4 pi sur 5}
Let $\mathcal{C}=\Diamond_{i=n}^m C_i$ be a fully leafed caterpillar, where $n,m\in \mathbb{Z}$, $m>n+1$ and where for every $k\in\{n,\ldots,m\}$, $C_k$ is a prime caterpillar. Then for every $j\in\{n,\ldots,m-1\}$, $C_j$ and $C_{j+1}$ cannot both determine an angle of $\frac{4\pi}{5}$.
\end{lemma}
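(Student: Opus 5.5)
We argue by contradiction, combining Corollary \ref{prop:serpente} with Lemma \ref{lem:4 pi sur 5}. Suppose that for some $j\in\{n,\ldots,m-1\}$ both $C_j$ and $C_{j+1}$ determine an angle of $\frac{4\pi}{5}$, and let $t$ be the grafting tile, $C_j\diamond_t C_{j+1}$. Since $m>n+1$, the hypotheses of Lemma \ref{lem:4 pi sur 5} hold for both indices $j$ and $j+1$. The tile $t$ is a leaf of $C_j$ adjacent to a leaf of $C_j'$, because grafting of prime caterpillars only occurs there; see Figure \ref{fig:greffages premiers HBS}. Symmetrically, $t$ is such a leaf of $C_{j+1}$. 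Fix a non-self-intersecting bi-infinite extension $H$ of the HBS chain $h$ of $\mathcal{C}$, as in Definition \ref{def: côté}, so that sides are well defined.

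By Corollary \ref{prop:serpente}, $C_j$ lies on one side $s$ of $h$ and $C_{j+1}$ lies on the opposite side $\bar s$. Lemma \ref{lem:4 pi sur 5} applied to $C_j$ puts $t$ strictly on side $\bar s$. Applied to $C_{j+1}$, it puts $t$ strictly on side $s$. The key point is that $t$ is a single tile shared by both caterpillars, so its position relative to $H$ is one fixed fact. It cannot lie on both sides, which gives the contradiction.

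The main obstacle is to make sure $t$ lies strictly on one side of $H$, i.e. is not cut by the chain, so that "side" is well defined for a tile. I would settle this from Figure \ref{fig:greffages premiers HBS}. There the grafting tile is the blue degree-2 tile between the two brown derived paths. It sits at the junction vertex of the two angles, and one checks that it lies in the interior of one region bounded by the star-graph edges. If that direct check were ambiguous, I would fall back on a finite case analysis instead. The two graftings in Figure \ref{fig:greffages premiers HBS} are the only ones allowed by Proposition \ref{prop: the two graftings}, and I would verify that neither can have both angles measuring $\frac{4\pi}{5}$ at the junction, using the angle values of Lemma \ref{prop:angle} and the decagonal boundary of a star kingdom.
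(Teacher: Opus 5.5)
Your argument is correct and is the paper's proof. Lemma~\ref{lem:4 pi sur 5} applied to $C_j$ puts the grafting tile on the side where $C_{j+1}$ lies, and the same lemma applied to $C_{j+1}$ puts that tile on the opposite side; you also make explicit the appeal to Corollary~\ref{prop:serpente}, which the paper leaves implicit. Your concern about $t$ lying strictly on one side is already assumed in the statement of Lemma~\ref{lem:4 pi sur 5}, so the fallback case analysis is not needed.
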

\begin{proof}
Let $\mathcal{C}=\Diamond_{i=n}^m C_i$ be as in the hypothesis. Let $j\in\{n,\ldots,m-1\}$ be such that $C_j$ determines an angle of $\frac{4\pi}{5}$. From Lemma~\ref{lem:4 pi sur 5}, the degree-2 tile for $\mathcal{C}$ through which $C_j$ is grafted onto $C_{j+1}$ lies on the same side of the HBS chain as the side where the angle of $C_{j+1}$ is formed. Hence, again by Lemma~\ref{lem:4 pi sur 5}, $C_{j+1}$ cannot form an angle of $\frac{4\pi}{5}$.
\end{proof}

Figure~\ref{fig:Voisinages HBS} shows vertex neighborhoods in the star graphs.

\begin{figure}[H]
    \centering
    \begin{tabular}{@{}ccc@{}}
    \includegraphics[width=0.27\textwidth, trim=0cm 4cm 0cm 0cm, clip]{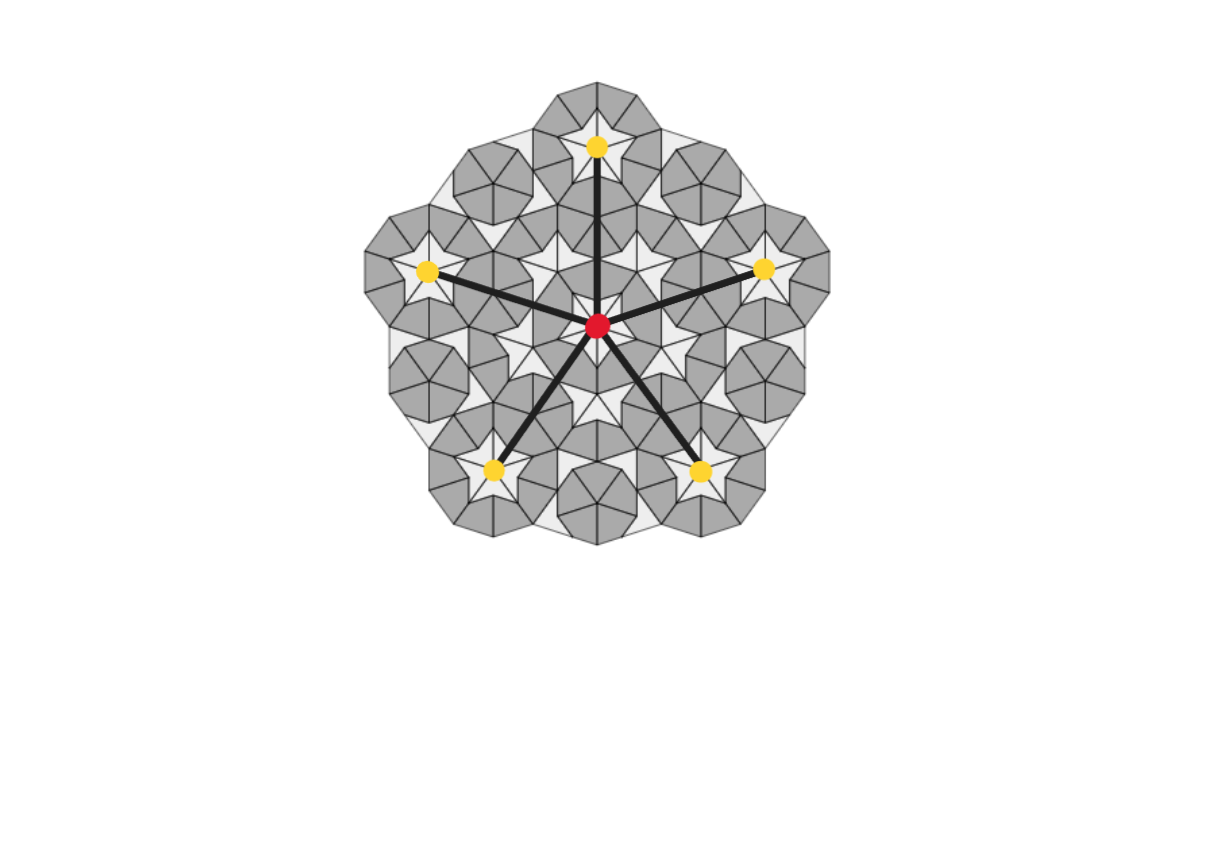} &
    \includegraphics[width=0.17\textwidth]{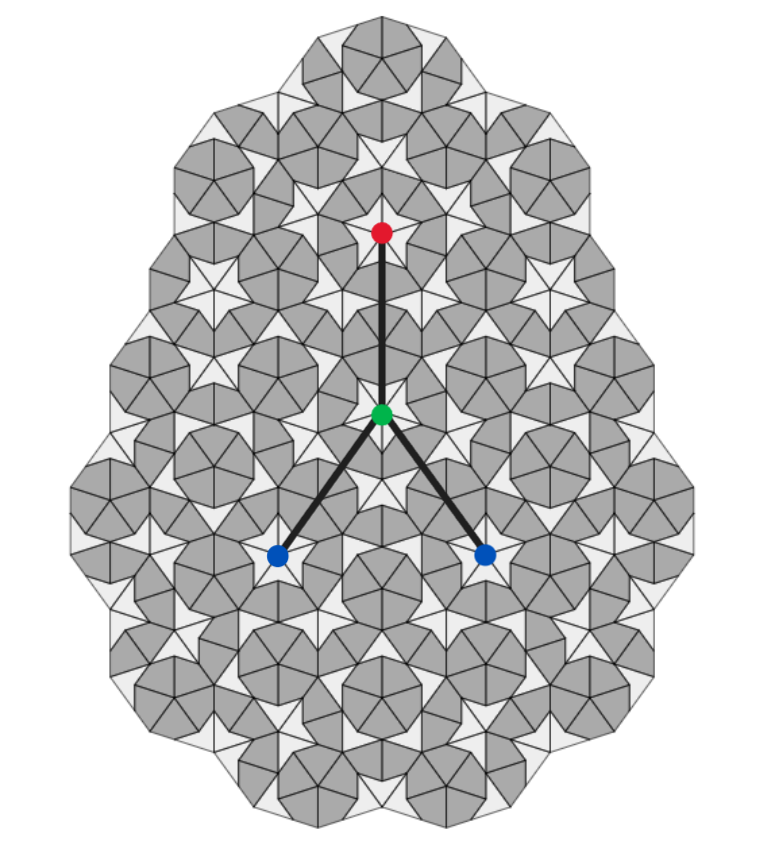} &
    \includegraphics[width=0.15\textwidth]{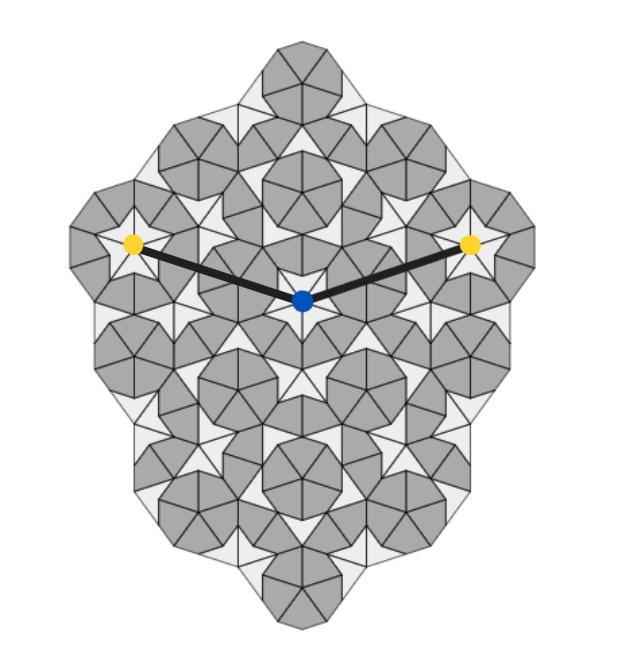}\\
    \small \makecell{(a) Neighborhood of any red\\ vertex in any star graph} &
    \small \makecell{(b) Neighborhood of any green\\ vertex in any star graph} &
    \small \makecell{(c) Neighborhood of any blue\\ vertex in any star graph}
    \end{tabular}
    \caption{Neighborhoods of the red, green, and blue vertices in any star graph. We recall that the yellow vertices are vertices with an undetermined color (between the colors red, green and blue). In case (a), note that yellow vertices cannot be red. In case (c), yellow vertices cannot be blue.}
    \label{fig:Voisinages HBS}
\end{figure}

\begin{proposition} \label{prop:CP1}
The prime caterpillar $PC_1$ does not belong to any bi-infinite fully leafed caterpillar.
\end{proposition}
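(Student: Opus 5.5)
We argue by contradiction. Suppose $PC_1$ occurs as some $C_j$ in a bi-infinite fully leafed caterpillar $\mathcal{C}=\Diamond_{i\in\mathbb{Z}} C_i$, where each $C_i$ is a prime caterpillar. Being bi-infinite, $\mathcal{C}$ is obtained by grafting prime caterpillars on both sides of $C_j$. So $C_{j-1}$ and $C_{j+1}$ both exist, and each of the finite windows $C_{j-2}\diamond\cdots\diamond C_{j+2}$ is a fully leafed caterpillar to which Corollary~\ref{prop:serpente}, Lemma~\ref{lem:4 pi sur 5} and Lemma~\ref{prop:4 pi sur 5} apply. The strategy is to locate the angle of $PC_1$ in the star-graph and show that its two neighbours cannot both be continued indefinitely.

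First, by Lemma~\ref{prop:angle}, $C_j=PC_1$ determines an angle of $\frac{4\pi}{5}$. By Lemma~\ref{prop:4 pi sur 5}, neither $C_{j-1}$ nor $C_{j+1}$ determines an angle of $\frac{4\pi}{5}$, so each is one of $PC_2,PC_4,PC_5$ (angles $\frac{6\pi}{5}$ or $\frac{8\pi}{5}$). By Corollary~\ref{prop:serpente} and Lemma~\ref{lem:4 pi sur 5}, both neighbours lie on the side of the HBS chain opposite to $C_j$, and both grafting tiles sit on that side.

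Next, I will read off from Figure~\ref{fig:chenilles premières - angles} the colours of the three star-centres spanned by the angle of $PC_1$, and in particular the colour of its apex. This should distinguish it from $PC_3$ and $PC_6$, which share the same angle measure. I then compare this local picture with the vertex neighbourhoods of Figure~\ref{fig:Voisinages HBS} and the HBS kingdoms of Figure~\ref{fig:royaumes HBS}. The claim to establish is that around the $PC_1$ configuration, the outer edges of its angle meet stars whose admissible continuations, on the required side, would force the next prime caterpillar to have an angle of $\frac{4\pi}{5}$, or would force a turn that is not an edge of the star-graph. Either outcome contradicts the previous step or Proposition~\ref{prop: the two graftings}.

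Concretely, I would enumerate the cases for $(C_{j-1},C_{j+1})\in\{PC_2,PC_4,PC_5\}^2$, up to the reflection symmetry of $PC_1$. For each case I would place the HBS chain inside the kingdom of the appropriate star tile and check vertex colours. Where the path still survives, I would extend one more step to $C_{j\pm2}$ and again invoke Lemma~\ref{prop:4 pi sur 5} together with the neighbourhood constraints, including the fact that the yellow vertices around red and blue vertices have restricted colours. I expect the main obstacle to be this step: showing that the colour constraints really exclude every continuation within a bounded radius rather than merely a single step. The case analysis is figure-driven, and it may need the larger HBS kingdoms (S1/S2) to pin down enough colours. If a bounded check does not close every case, my fallback is to use the inflation structure from Theorem~\ref{thm 1} and argue that $PC_1$ never appears in the chain words generated by $\Phi$; this would require first showing that every bi-infinite caterpillar arises in that way, which is harder. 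So I would try the local enumeration first.
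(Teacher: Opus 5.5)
Your plan follows the same route as the paper's proof: a bounded local analysis around the star adjacent to $PC_1$, using star-centre colours, the neighbourhoods of Figure~\ref{fig:Voisinages HBS} and the two $\frac{4\pi}{5}$ lemmas. However, it stops exactly where the proof starts. The apex colour, the forced continuations and the closing contradictions are all left as things to be \emph{read off} or \emph{established}.

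The decisive fact is that the star adjacent to $PC_1$ has a \emph{red} centre. By Figure~\ref{fig:Voisinages HBS}, the neighbouring centres on the HBS chain are then green or blue.
\begin{itemize}
\item Suppose $C_{j+1}$ sits on a green-centred star. The green neighbourhood and Lemma~\ref{prop:4 pi sur 5} force it to determine $\frac{6\pi}{5}$, and the angle pictures of the prime caterpillars then force $C_{j+1}=PC_2$. The neighbourhood next forces $C_{j+2}$ onto a blue-centred star where it determines $\frac{4\pi}{5}$. This is excluded by Lemma~\ref{lem:4 pi sur 5}.
\item Suppose $C_{j+1}$ sits on a blue-centred star. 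Then $C_{j-1}$ is either green-centred, which is excluded by the previous case by symmetry, or blue-centred. In the blue-centred case Figure~\ref{fig:Voisinages HBS} forces $C_{j-1}$ to determine $\frac{4\pi}{5}$, contradicting Lemma~\ref{prop:4 pi sur 5}, since $PC_1$ does too.
\end{itemize}

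There are also concrete problems with what you did write.
\begin{itemize}
\item In the green case the contradiction occurs at distance two, with an intermediate angle of $\frac{6\pi}{5}$. So invoking Lemma~\ref{prop:4 pi sur 5} \emph{again} at $C_{j\pm2}$, as you propose, yields nothing. What closes the case is Lemma~\ref{lem:4 pi sur 5}, i.e.\ the side of the HBS chain on which the grafting tiles of a $\frac{4\pi}{5}$ caterpillar lie.
\item Enumerating over $(C_{j-1},C_{j+1})\in\{PC_2,PC_4,PC_5\}^2$ is the wrong bookkeeping. Splitting on the colours of the neighbouring centres is what makes the analysis short, because the colour, together with the neighbourhood figure, dictates the forced angle and type.
\item Your fallback is not just harder but unavailable. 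The paper later constructs a bi-infinite fully leafed caterpillar containing a cape~4, which the morphism $\Phi$ of Theorem~\ref{thm 1} never produces. So not every bi-infinite caterpillar arises from that inflation.
\end{itemize}
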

\begin{proof}
First, from Figures \ref{fig:chenilles premières - angles} and \ref{fig:Voisinages HBS}, the star adjacent to a $PC_1$ caterpillar must have a red vertex at its center. We also know from Lemma \ref{prop:angle} that $PC_1$ determines an angle of $\frac{4\pi}{5}$.

Let $C$ be a prime caterpillar of type $PC_1$. We consider the grafting $C_2 \diamond C \diamond C_1$, where $C_2$ and $C_1$ are prime caterpillars. Suppose that the star adjacent to $C_1$ has a green vertex at its center. From Lemma \ref{prop:4 pi sur 5} and Figure \ref{fig:Voisinages HBS}, the caterpillar $C_1$ must form an angle of $\frac{6\pi}{5}$, and therefore, by Figures \ref{fig:greffages premiers HBS} and \ref{fig:chenilles premières - angles} and Lemma \ref{prop:angle}, $C_1$ is of type $PC_2$. From Figure \ref{fig:Voisinages HBS}, this prime caterpillar should be grafted to a prime caterpillar adjacent to a star with blue center, and then it forms an angle of $\frac{4\pi}{5}$. However, by Lemma \ref{lem:4 pi sur 5}, this is not possible.

Now suppose that the star adjacent to $C_1$ has a blue center. Since $C$ is adjacent to a star of red center, from Figure \ref{fig:Voisinages HBS}, $C_2$ is adjacent to a star with green center or with blue center. In the first case, from the previous paragraph, this does not allow $C$ to be extended in a bi-infinite fully leafed caterpillar. In the second case, from Figure \ref{fig:Voisinages HBS}, $C_2$ would determine an angle of $\frac{4\pi}{5}$. But that is impossible, from Lemma \ref{prop:4 pi sur 5}, since $C$ also determines an angle of $\frac{4\pi}{5}$.

\end{proof}

\subsection{Sea caterpillars}
To facilitate the study of bi-infinite fully leafed caterpillars, we introduce a new class of caterpillars that we call \textbf{sea caterpillars}. All sea caterpillars are shown in Figure~\ref{fig:chenilles marines}. From Figure \ref{fig:royaumes HBS}, it can be readily verified that every bi-infinite fully leafed caterpillar can be obtained by successive graftings of sea caterpillars. Thus, as sea caterpillars are larger than prime caterpillars, using them may facilitate the identification of bi-infinite fully leafed caterpillars.

\begin{figure}[H]
    \centering
    \begin{subfigure}{0.27\textwidth}
        \centering
        \includegraphics[width=\textwidth, trim=0cm 0.5cm 0cm 0.3cm, clip]{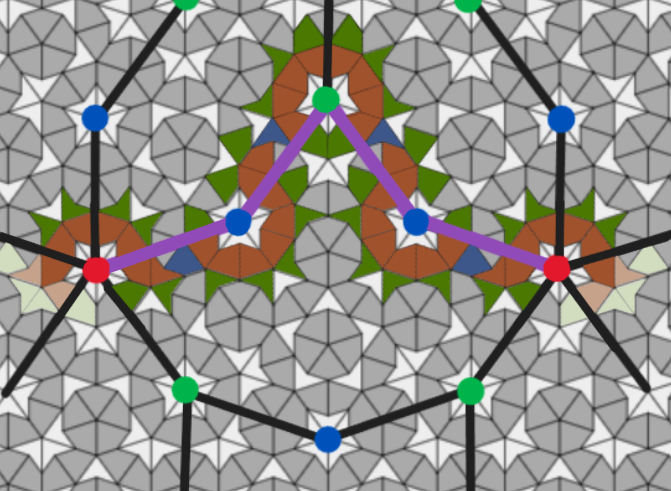}
        \caption{Sail}
    \end{subfigure}
    \begin{subfigure}{0.27\textwidth}
        \centering
        \includegraphics[width=\textwidth, trim=0cm 0cm 0cm 0cm, clip]{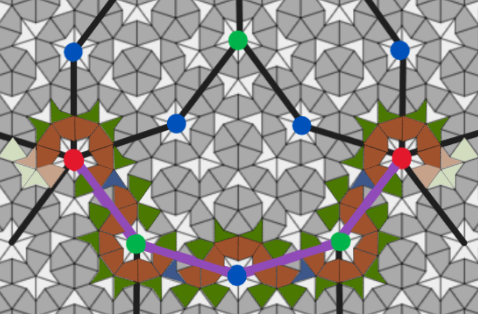}
        \caption{Hull}
    \end{subfigure} 
    \begin{subfigure}{0.23\textwidth}
        \centering
        \includegraphics[width=\textwidth, trim=0cm 0.4cm 0cm 0.1cm, clip]{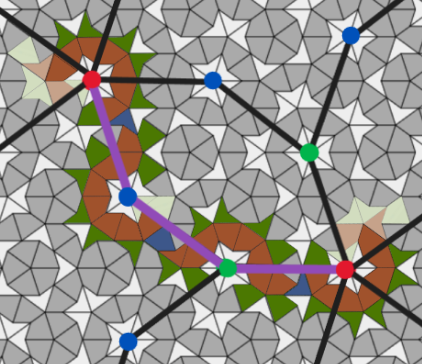}
        \caption{Dock 1}
    \end{subfigure}  
    \begin{subfigure}{0.23\textwidth}
        \centering
        \includegraphics[width=\textwidth, trim=0cm 0.9cm 0cm 0.05cm, clip]{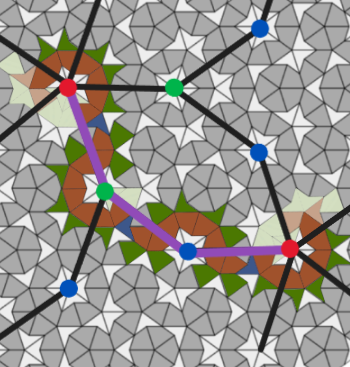}
        \caption{Dock 2}
    \end{subfigure}
    \begin{subfigure}{0.23\textwidth}
        \centering
        \includegraphics[width=\textwidth, trim=0cm 0.2cm 0cm 0cm, clip]{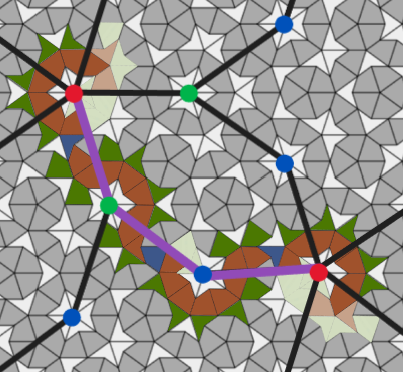}
        \caption{Dock 3}
    \end{subfigure}
    \begin{subfigure}{0.23\textwidth}
        \centering
        \includegraphics[width=\textwidth]{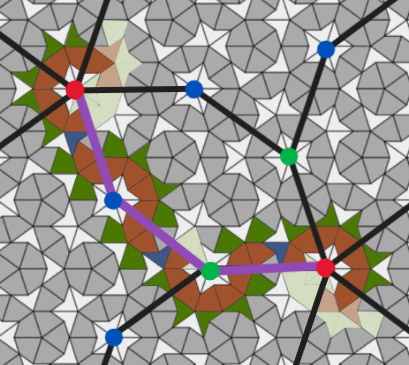}
        \caption{Dock 4}
    \end{subfigure} 
    \begin{subfigure}{0.29\textwidth}
        \centering
        \includegraphics[width=\textwidth, trim=0cm 0cm 0cm 0cm, clip]{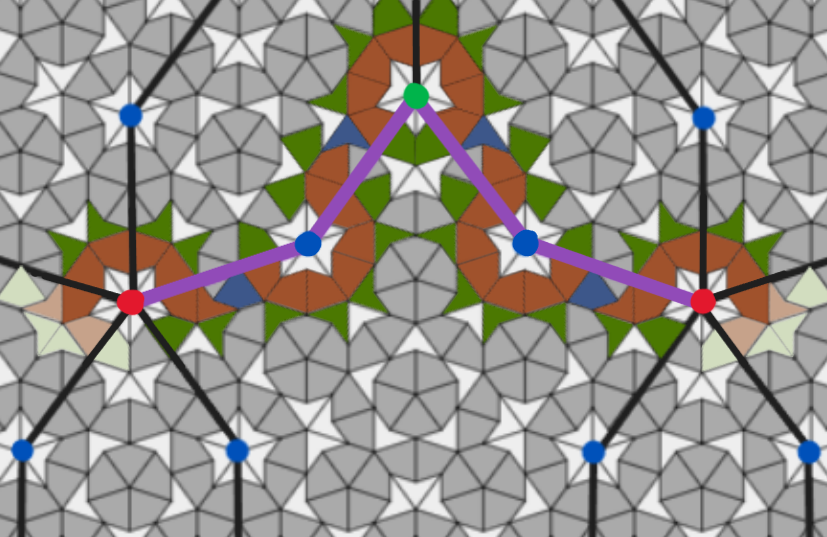}
        \caption{Cape 1}
    \end{subfigure} 
    \begin{subfigure}{0.40\textwidth}
        \centering
        \includegraphics[width=\textwidth, trim=0cm 0.7cm 0cm 0cm, clip]{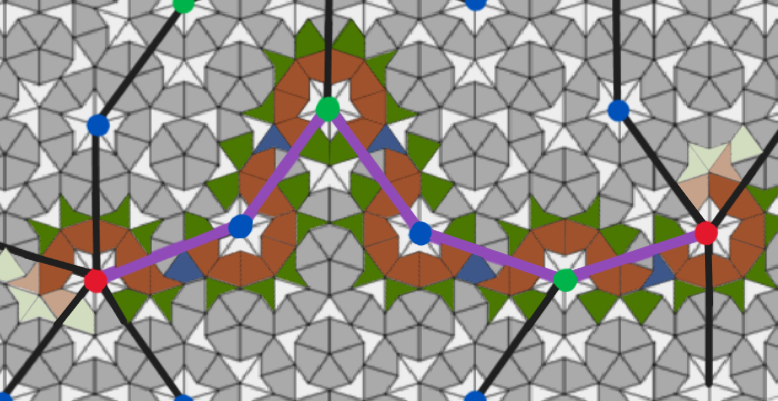}
        \caption{Cape 2}
    \end{subfigure} 
    \begin{subfigure}{0.40\textwidth}
        \centering
        \includegraphics[width=\textwidth, trim=0cm 0.4cm 0cm 0cm, clip]{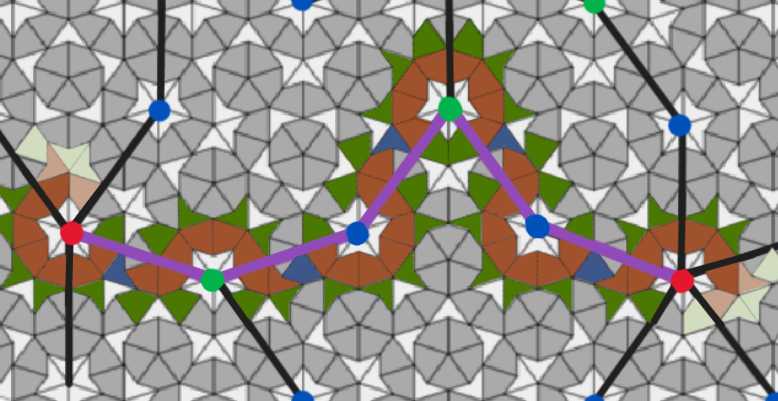}
        \caption{Cape 3}
    \end{subfigure} 
    \begin{subfigure}{0.445\textwidth}
        \centering
        \includegraphics[width=\textwidth, trim=0cm 0cm 0cm 0cm, clip]{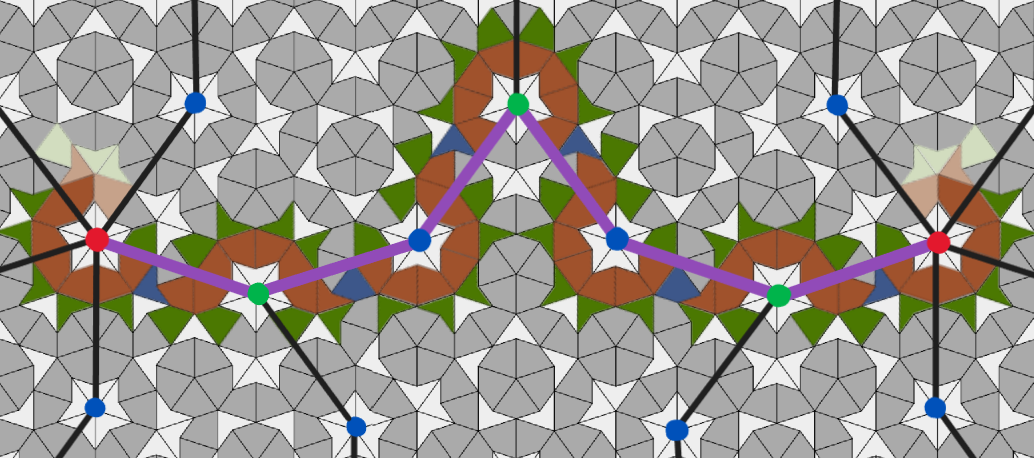}
        \caption{Cape 4}
    \end{subfigure}
    \caption{Sea caterpillars (up to rotation). There are two choices of prime caterpillars (forming different angles) at each end of a sea caterpillar. The choice depends on the degree-3 chosen tile (in light brown). Sail and hull are constructed on a boat, docks are constructed on a hexagon and capes are constructed on a star (of an HBS tiling).}
    \label{fig:chenilles marines}
\end{figure}

For the rest of the paper, we will only use the purple chains that appear in Figure \ref{fig:chenilles marines} to identify sea caterpillars. We distinguish reflections between sea caterpillars because, ultimately, our goal is to characterize all bi-infinite fully leafed caterpillars by words. Notice that no prime caterpillar determines an angle of $\frac{2\pi}{5}$, so from an angle of $\frac{8\pi}{5}$ in an HBS chain, the side on which the prime caterpillar associated to this angle lies is determined, and from Lemma \ref{prop:angle}, this prime caterpillar is necessarily $PC_4$. By Corollary \ref{prop:serpente}, the rest of the caterpillar along the HBS chain is then completely determined. It is then easy to reconstruct a caterpillar (up to the choice of the prime caterpillars at the two ends of the caterpillar) from a given HBS chain containing an angle of $\frac{8\pi}{5}$.

\section{Beginnings of the research of all bi-infinite fully leafed caterpillars}

The proof of Theorem~\ref{thm 1} shows how to construct a bi-infinite fully leafed caterpillar in P2-graphs. It is then natural to ask whether other such bi-infinite caterpillars exist. Theorem~\ref{thm:deuxième chenille infinie} answers this question in the affirmative, thereby refuting Conjecture~2 in \cite{porrier2023leaf}.

\subsection{Elimination of sea caterpillars}

Our goal is to find all bi-infinite fully leafed caterpillars in P2-graphs. The approach adopted here consists of  eliminating graftings of prime and sea caterpillars until the set of admissible graftings becomes sufficiently restricted to yield all possible constructions of bi-infinite fully leafed caterpillars. We present here the initial steps in identifying such forbidden graftings.

\begin{lemma} \label{BRB hexagone}
Consider the HBS chain of a dock~2 (shown in purple in Figure~\ref{fig:chenilles marines}~(d)), and let $R$ be the red vertex of this chain adjacent in the star-graph to its blue vertex. If the dock~2 extends into a bi-infinite fully leafed caterpillar, then the prime caterpillar adjacent to the star of center $R$ must be of type~$PC_2$.
\end{lemma}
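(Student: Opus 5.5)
The argument is a short case analysis on the prime caterpillar $C_R$ adjacent to the star of center $R$. Lemma \ref{prop:angle}, Lemma \ref{lem:4 pi sur 5}, Lemma \ref{prop:4 pi sur 5} and the neighborhoods of Figure \ref{fig:Voisinages HBS} should cut down its possible types until only $PC_2$ remains.

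First we fix the local picture from Figure \ref{fig:chenilles marines}~(d). The dock~2 lies on a hexagon of the HBS tiling. Its purple chain passes through $R$ with two consecutive edges: one to the blue vertex $B$, and one continuing along the hexagon border or leaving the hexagon. The angle of $C_R$ is exactly the angle at $R$ formed by these two edges. Since the hexagon kingdom (Figure \ref{fig:royaumes HBS}) is determined, we read off the measure of this angle on the side where $C_R$ must lie. By Corollary \ref{prop:serpente}, that side is forced by the alternation of sides along the chain starting from the prime caterpillar at $B$. By Lemma \ref{prop:angle}, the measure restricts $C_R$ to $\{PC_1,PC_3,PC_6\}$ (angle $\frac{4\pi}{5}$), to $\{PC_2,PC_5\}$ (angle $\frac{6\pi}{5}$), or to $PC_4$. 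I expect the forced angle to be $\frac{6\pi}{5}$, which leaves only $PC_2$ and $PC_5$. If the figure allows both choices of light-brown end tile, I would also treat the $\frac{4\pi}{5}$ alternative: $PC_1$ is excluded by Proposition \ref{prop:CP1}, and $PC_3$ and $PC_6$ are excluded by Lemma \ref{prop:4 pi sur 5} or Lemma \ref{lem:4 pi sur 5} applied with the neighbour at $B$.

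The main step is separating $PC_2$ from $PC_5$. Both give the same angle, so the distinction must come from the star type at $R$ together with what lies beyond $R$. Because $R$ is red, Figure \ref{fig:Voisinages HBS}(a) says its star-graph neighbours are non-red, and the neighbourhood of a red vertex fixes the geometry of the edges leaving $R$. I would suppose $C_R$ is $PC_5$ and follow the chain one step further, to the vertex $R'$ opposite $B$. Using Figures \ref{fig:greffages premiers HBS} and \ref{fig:chenilles premières - angles}, I would determine which leaf of $PC_5$ carries the grafting tile and on which side of the HBS chain it lies. Then, mirroring the argument of Proposition \ref{prop:CP1}, I would show that the next prime caterpillar at $R'$ is forced to have angle $\frac{4\pi}{5}$ while its grafting leaves sit on the wrong side, contradicting Lemma \ref{lem:4 pi sur 5}. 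Alternatively, its forced angle may conflict with the colour of $R'$ (green or blue) via Figure \ref{fig:Voisinages HBS}(b),(c).

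This elimination of $PC_5$ is the step I expect to be the real obstacle. It needs a careful reading of the kingdom around the hexagon and may require going two prime caterpillars beyond $R$ before a contradiction appears. In that case I would enumerate the few admissible continuations, as in Figure \ref{fig:greffages de deux chenilles premières}, and show that each one dies. Bi-infiniteness is used exactly here: a finite caterpillar could stop before the contradiction, but a bi-infinite one cannot.
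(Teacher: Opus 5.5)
Your case analysis targets the wrong alternative, so the proof as planned does not establish the lemma. Figure~\ref{fig:chenilles marines}~(d) already restricts the prime caterpillar at the end of the dock~2 at $R$ to exactly two options. These come from the two light-brown tile choices: $PC_2$, with angle $\frac{6\pi}{5}$, and $PC_4$, with angle $\frac{8\pi}{5}$. The end angle is therefore not forced to $\frac{6\pi}{5}$, and there is no $PC_2$ versus $PC_5$ ambiguity to resolve. The step you flag as the main obstacle is moot, and so is your fallback for a $\frac{4\pi}{5}$ end. You do list $PC_4$ as a logical possibility, but nothing in your plan eliminates it, and it is precisely the one case that requires an argument.

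The missing idea is an acyclicity argument on the hexagon, not a contradiction from Lemma~\ref{lem:4 pi sur 5} or from vertex colours. The paper's proof assumes $PC_4$ at $R$ and extends the dock~2, as bi-infiniteness requires. On the same hexagon that supports the dock~2, the extension then forms an angle of $\frac{4\pi}{5}$ at the blue vertex not on the dock's chain. Since no prime caterpillar determines an angle of $\frac{2\pi}{5}$, it must next form an angle of $\frac{6\pi}{5}$ at the adjacent green vertex not yet on the chain. This closes the chain around the hexagon and creates a cycle, so the result is not a tree, a contradiction. You should drop the $PC_5$ step and replace it with this elimination of $PC_4$.
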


\begin{proof}
From Figure~\ref{fig:chenilles marines}~(d), the prime caterpillar adjacent to the star of center $R$ is either of type $PC_2$ or $PC_4$. Suppose it is $PC_4$ and we attempt to extend the dock 2 into a bi-infinite fully leafed caterpillar $\mathcal{C}$. On the same hexagon supporting the dock~2, the extension will then form an angle of $\frac{4\pi}{5}$ at the blue vertex not belonging to the HBS chain of the initial dock 2. Since no prime caterpillar forms an angle of $\frac{2\pi}{5}$, the extension would form an angle of $\frac{6\pi}{5}$ at the green vertex not already belonging to the HBS chain of the initial dock 2 (the one adjacent in the star-graph to the blue vertex we just considered). This configuration then creates a cycle, implying that $\mathcal{C}$ is not a tree.
\end{proof}

\begin{lemma}\label{a}
    If a sail is extended into a bi-infinite fully leafed caterpillar, then the red vertices of the HBS chain of this sail are the center of a star adjacent to a prime caterpillar $PC_2$.
\end{lemma}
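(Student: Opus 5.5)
The plan is to argue exactly as in Lemma~\ref{BRB hexagone}, by contradiction via cycle formation. The sail sits on an HBS boat. Its HBS chain, shown in purple in Figure~\ref{fig:chenilles marines}~(a), runs along part of the boundary of that boat and contains red vertices. By Lemma~\ref{prop:angle}, the prime caterpillar adjacent to the star centred at a red vertex $R$ of this chain must determine an angle compatible with the boat's geometry at $R$. Reading off Figure~\ref{fig:chenilles marines}~(a) and the neighborhood of a red vertex in Figure~\ref{fig:Voisinages HBS}~(a), the two candidates are $PC_2$ (angle $\frac{6\pi}{5}$) and $PC_4$ (angle $\frac{8\pi}{5}$). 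The case $PC_1$ is excluded outright by Proposition~\ref{prop:CP1}: although $PC_1$ is the prime caterpillar attached to red stars with angle $\frac{4\pi}{5}$, it never occurs in a bi-infinite fully leafed caterpillar. So it suffices to rule out $PC_4$ at each red vertex.

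Suppose, then, that the prime caterpillar at some red vertex $R$ of the sail's chain is $PC_4$. An angle of $\frac{8\pi}{5}$ at $R$ forces the HBS chain to turn sharply around the boat. Since no prime caterpillar determines an angle of $\frac{2\pi}{5}$, the side on which the caterpillar lies is forced at $R$. By Corollary~\ref{prop:serpente}, the sides at all subsequent vertices along the chain are then determined as well. This lets me follow the chain step by step along the boundary of the boat kingdom in Figure~\ref{fig:royaumes HBS}~(a). At each subsequent vertex I would use three facts to pin down the next edge uniquely:
\begin{itemize}
\item the vertex colors of the boat (Figure~\ref{fig:protuiles HBS});
\item the admissible angle values from Lemma~\ref{prop:angle};
\item Lemma~\ref{prop:4 pi sur 5}, which forbids two consecutive $\frac{4\pi}{5}$ angles, together with Lemma~\ref{lem:4 pi sur 5}.
\end{itemize}
The expected outcome is that the forced continuation wraps around the boat and returns to a vertex already on the chain. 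That would create a cycle in the star-graph path, and hence a cycle in the caterpillar, contradicting that $\mathcal{C}$ is a tree. Alternatively, the continuation might be forced into two consecutive $\frac{4\pi}{5}$ angles, contradicting Lemma~\ref{prop:4 pi sur 5}.

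If the sail's chain has two red vertices, I would treat each one separately, using the reflection symmetry of the sail where it applies; otherwise I would handle the second one by an analogous chase. The main obstacle is this forced-continuation step. I must check carefully that at each vertex of the boat exactly one extension survives, using the color constraints in Figure~\ref{fig:Voisinages HBS} (for example, that yellow neighbours of a blue vertex cannot be blue). I also need to confirm that the chase closes up a cycle rather than escaping the boat kingdom. I expect the chase to close within the boat kingdom, which is already determined, so no ambiguity from unknown surrounding tiles should arise.
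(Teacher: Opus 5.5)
There is a genuine gap, and it comes from misreading the sail. At the red vertices of a sail's HBS chain, Figure~\ref{fig:chenilles marines}~(a) leaves exactly two options: $PC_1$ (angle $\frac{4\pi}{5}$) or $PC_2$ (angle $\frac{6\pi}{5}$). The pair $\{PC_2,PC_4\}$ that you read off belongs to other configurations. It is the pair at the red vertices of a \emph{hull} (Lemma~\ref{b}), and at the vertex $R$ of a dock~2 (Lemma~\ref{BRB hexagone}), which is the lemma you used as a template. Your text is also inconsistent here: it names two candidates and then excludes a third. With the correct dichotomy, your single sentence invoking Proposition~\ref{prop:CP1} is already the whole proof, and it is exactly the paper's argument: $PC_1$ never lies in a bi-infinite fully leafed caterpillar, so $PC_2$ is forced.

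As written, though, the substantive step of your argument is the exclusion of $PC_4$, and that step is not proved. The chase is only sketched, and its ``expected outcome'' (a cycle, or two consecutive $\frac{4\pi}{5}$ angles) is never verified. Moreover, the mechanism you anticipate is not a general feature of a $PC_4$ at a red vertex of a boat. Hulls do occur in bi-infinite fully leafed caterpillars, for instance in the sea graftings (a)--(f) of Figure~\ref{fig:greffages marins}. By Lemma~\ref{b} they then carry $PC_4$ at their red vertices, so an $\frac{8\pi}{5}$ angle there does not force the chain to wrap around the boat and close a cycle. For the sail, the $PC_4$ case simply does not arise. Drop the chase and replace it with the correct case split $\{PC_1,PC_2\}$.
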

\begin{proof}
    From Figure~\ref{fig:chenilles marines}, the red vertices of an HBS chain of a sail extended into a bi-infinite fully leafed caterpillar are the center of a star adjacent to a prime caterpillar $PC_1$ or $PC_2$. From Proposition~\ref{prop:CP1}, the conclusion follows.
\end{proof}

\begin{lemma}\label{b}
    If a hull is extended into a bi-infinite fully leafed caterpillar, the red vertices of the HBS chain of the hull are the center of a star adjacent to a prime caterpillar $PC_4$.
\end{lemma}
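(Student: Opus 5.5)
Following Lemma~\ref{a}, the proof first uses Figure~\ref{fig:chenilles marines}~(b) to list, for each red vertex $R$ of the hull's HBS chain, which prime caterpillars can be adjacent to the star of center $R$. Red centers correspond to stars adjacent to no sun. By Figures~\ref{fig:chenilles premières - angles} and~\ref{fig:Voisinages HBS}, the candidates should be $PC_1$ and $PC_4$; $PC_2$ and the other types are excluded by the angle the hull's chain makes at $R$ and by the star-center colour constraint. If the figure in fact allows $PC_1$ or $PC_4$, then Proposition~\ref{prop:CP1} already rules out $PC_1$ for any caterpillar that extends to a bi-infinite fully leafed one, and the lemma follows immediately, exactly as in Lemma~\ref{a}.

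If the hull's figure instead allows the pair $PC_2$ or $PC_4$ at a red vertex, as for the dock~2 in Lemma~\ref{BRB hexagone}, I would argue as in that lemma. Assume the prime caterpillar at $R$ is $PC_2$, with angle $\frac{6\pi}{5}$ by Lemma~\ref{prop:angle}, and follow the forced continuation along the boat supporting the hull. Corollary~\ref{prop:serpente} makes consecutive prime caterpillars alternate sides of the HBS chain. No prime caterpillar has angle $\frac{2\pi}{5}$, and Lemma~\ref{prop:4 pi sur 5} forbids two consecutive angles of $\frac{4\pi}{5}$. Together these should force the next angles at the neighbouring vertices of the boat kingdom (Figure~\ref{fig:royaumes HBS}~(a)). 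I expect the result to be one of two contradictions. Either the chain closes around the boat, so the caterpillar contains a cycle and is not a tree. Or the chain reaches a red-centred star that would need a $PC_1$, which Proposition~\ref{prop:CP1} forbids. The neighbourhood of a red vertex in Figure~\ref{fig:Voisinages HBS}~(a), whose neighbours cannot be red, limits which colours the continuation can take.

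A hull may contain two red vertices. If it does, the argument is run for each one, using the reflection symmetry of the hull where it exists and otherwise treating the second vertex as a separate case.

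The main obstacle is the second case: tracking the forced angles around the boat and showing that every branch ends in a cycle or a $PC_1$. I would first check whether Figure~\ref{fig:chenilles marines}~(b) only allows $\{PC_1, PC_4\}$, since then the whole proof reduces to one line through Proposition~\ref{prop:CP1}.
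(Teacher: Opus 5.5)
\textbf{There is a genuine gap: neither branch of your case split yields a proof.}

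\emph{The branch you hope for does not occur.} At the red ends of a hull, Figure~\ref{fig:chenilles marines}~(b) allows exactly $PC_4$ or $PC_2$. It is the sail, in Lemma~\ref{a}, that gives the pair $PC_1$ or $PC_2$. So your claim that $PC_2$ is excluded ``by the angle the hull's chain makes at $R$ and by the star-center colour constraint'' is false. $PC_2$ is locally admissible there, which is exactly why the lemma needs an argument about extension beyond the hull. The one-line reduction to Proposition~\ref{prop:CP1} is therefore unavailable.

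\emph{In the case that does occur, you derive no contradiction.} You write ``I expect'' one of two outcomes, and the mechanisms you predict are not the ones that close the argument. Lemma~\ref{prop:4 pi sur 5} (no two consecutive $\frac{4\pi}{5}$ angles) does not apply, because $PC_2$ forms an angle of $\frac{6\pi}{5}$. Neither a cycle around the boat (the mechanism of Lemma~\ref{BRB hexagone}) nor a forced $PC_1$ is what ends the argument.

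The missing ingredient is the finer Lemma~\ref{lem:4 pi sur 5}, which says on which side of the HBS chain the grafting leaves of a $\frac{4\pi}{5}$-angle prime caterpillar lie. The paper argues as follows.
\begin{enumerate}
\item Put $PC_2$ at $R$ and let $C_1$ be the next prime caterpillar along the chain.
\item The centre of $C_1$'s star is a neighbour of a red vertex, so it is not red (Figure~\ref{fig:Voisinages HBS}~(a)).
\item By Lemma~\ref{lem:4 pi sur 5} that centre cannot be blue. A blue centre would require $C_1$ to form a $\frac{4\pi}{5}$ angle, incompatible with where its grafting leaf shared with $PC_2$ lies.
\item Hence the centre is green and $C_1$ forms an angle of $\frac{6\pi}{5}$.
\item Inspecting the kingdom of this configuration (Figure~\ref{fig:prolongements de coque avec CP2}~(b)), the next continuation is again blocked by Lemma~\ref{lem:4 pi sur 5}.
\end{enumerate}
This two-step forced continuation, ending in a Lemma~\ref{lem:4 pi sur 5} obstruction rather than a cycle or a $PC_1$, is what your sketch would need to supply to become a proof.
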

\begin{proof}
    Let $C$ be a hull. From Figure~\ref{fig:chenilles marines}, the red vertices of the HBS chain of $C$ are the center of a star adjacent to a prime caterpillar $PC_4$ or $PC_2$. Suppose $PC_2$ is chosen for one of these caterpillars and we attempt to extend it. Let $C_1$ be the prime caterpillar adjacent to the star centered at the yellow vertex in Figure~\ref{fig:prolongements de coque avec CP2}~(a). Clearly, by Lemma~\ref{lem:4 pi sur 5}, the yellow vertex cannot be blue. Hence, $C_1$ is adjacent to a star of green center and forms an angle of $\frac{6\pi}{5}$. Figure~\ref{fig:prolongements de coque avec CP2}~(b) shows a region of the kingdom of this configuration. Again, by Lemma~\ref{lem:4 pi sur 5}, it is impossible to extend the caterpillar into a bi-infinite fully leafed caterpillar.

    \end{proof}

\begin{figure}[H]
    \centering
    \begin{subfigure}{0.23\textwidth}
        \centering
        \includegraphics[width=\textwidth]{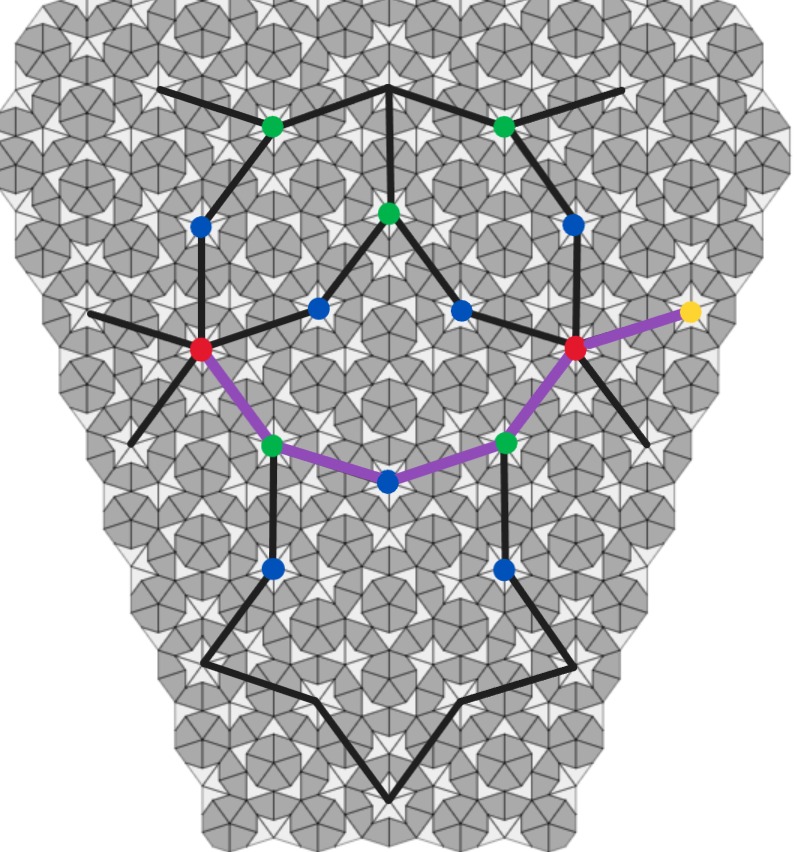}
        \caption{}
    \end{subfigure}
    \begin{subfigure}{0.23\textwidth}
        \centering
        \includegraphics[width=\textwidth]{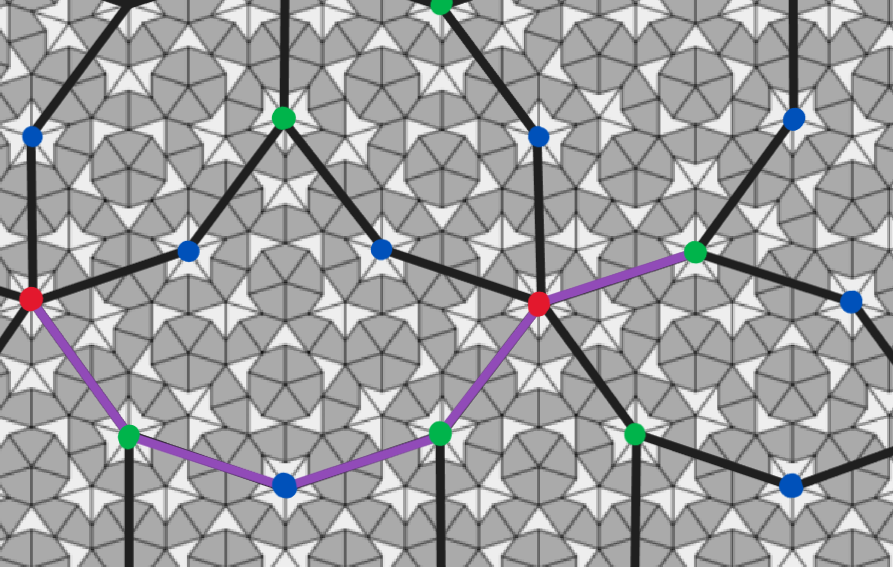}
        \caption{}
    \end{subfigure}
    \caption{Attempted extension of a hull where a prime caterpillar adjacent to a star of red center is $PC_2$}
    \label{fig:prolongements de coque avec CP2}
\end{figure}

\begin{lemma}
    Every sail is equivalent to the hull on the same boat, in the sense that in a fully leafed caterpillar $\mathcal{C}$, these two sea caterpillars can be interchanged without modifying the rest of the construction of $\mathcal{C}$.
\end{lemma}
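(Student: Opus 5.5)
The plan is to show that a sail and the hull built on the same HBS boat agree on everything that the rest of $\mathcal{C}$ can see. Both must occupy the same "interface": the same two end-stars of the boat and the same grafting tiles at those ends. Once that is established, swapping one for the other leaves every other prime caterpillar of $\mathcal{C}$, and its position, unchanged.

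First I fix a boat $b$ of the HBS tiling, using its kingdom from Figure \ref{fig:royaumes HBS}, and compare the purple chains of the sail and of the hull on $b$ from Figure \ref{fig:chenilles marines}. Both chains run along the boundary of $b$, but on complementary arcs. They share the same two endpoints, the two extreme stars of the boat, and they pass through the red vertices of $b$.

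The key step is to pin down the two extremal prime caterpillars. In a bi-infinite fully leafed caterpillar, Lemma \ref{a} forces the caterpillars at the red vertices of a sail to be $PC_2$, and Lemma \ref{b} forces those of a hull to be $PC_4$. With these choices fixed, the kingdom of the boat is fully determined, and I would check directly on the figure three things:
\begin{enumerate}
\item the prime caterpillars adjacent to the two end-stars are the same for the sail and the hull (same type, same side of the chain);
\item the degree-2 grafting tiles at the two ends coincide;
\item the number of prime caterpillars, and hence the order and number of leaves, is the same.
\end{enumerate}
Here I use that a $PC_4$ turns through $\frac{8\pi}{5}$ while a $PC_2$ turns through $\frac{6\pi}{5}$ (Lemma \ref{prop:angle}). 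Together with the alternation of sides in Corollary \ref{prop:serpente}, this should make the incoming and outgoing directions of the two chains at the end-stars coincide.

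Finally I argue that the swap is legitimate. Suppose $\mathcal{C}=\mathcal{C}_1\diamond S\diamond \mathcal{C}_2$, where $S$ is the sail. Replacing $S$ by the hull $H$ keeps the grafting tiles, so $\mathcal{C}_1\diamond H\diamond\mathcal{C}_2$ is still connected. Since $S$ and $H$ have the same order and leaf count, the result is still fully leafed by Corollary \ref{cor 1}.

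The main obstacle is acyclicity and inducedness. I must show that no tile of $H$ is adjacent to, or coincides with, a tile of $\mathcal{C}_1$ or $\mathcal{C}_2$ outside the grafting tiles. The approach is to note that $S$ and $H$ both lie inside the kingdom of $b$ together with its two end-stars. I would then argue that any prime caterpillar of $\mathcal{C}_1$ or $\mathcal{C}_2$ lies in HBS tiles on the far side of the end-stars, using the side analysis of Definition \ref{def: côté}. If some configuration resists this argument, I would fall back on checking the two possible neighbouring sea caterpillars at each end case by case on the kingdoms of Figure \ref{fig:royaumes HBS}.
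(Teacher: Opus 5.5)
Your route is the paper's, whose proof is one line: the lemma follows from Lemmas \ref{a} and \ref{b}. That is, fix the end choices with those two lemmas, then read off from the boat that the two sea caterpillars present the same interface to the rest of $\mathcal{C}$.

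The problem is your item (1). It is false, and it comes from misplacing the red vertices. In a sea caterpillar the choice of prime caterpillar occurs at its two ends (caption of Figure \ref{fig:chenilles marines}), and Lemmas \ref{a} and \ref{b} are exactly statements resolving that choice. So the red vertices of the boat are the two common endpoints of the sail and hull chains, not vertices the chains merely pass through. Hence at each common end-star the sail carries a $PC_2$ and the hull a $PC_4$. The extremal prime caterpillars necessarily have different types, and the check in (1) would fail. Likewise, the two chains do not arrive at an end-star in the same direction, since they come in along different edges of the boat; only the outgoing edge is shared.

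What you need to verify instead is this: these two different prime caterpillars around the same red star lie on the same side of the common outgoing star-graph edge $e$, and they end at the same leaf, namely the grafting tile toward the next prime caterpillar of $\mathcal{C}$. This is where Lemma \ref{prop:angle} actually enters. The two angles differ by exactly the boat's corner of $\frac{2\pi}{5}$ at the red vertex. The sail's $PC_2$ sweeps $\frac{6\pi}{5}$ from its boat edge to $e$, on the side away from that corner. The hull's $PC_4$ sweeps $\frac{8\pi}{5}=\frac{6\pi}{5}+\frac{2\pi}{5}$ from the other boat edge to $e$, around the corner. So both reach $e$ from the same side, and by Corollary \ref{prop:serpente} the prime caterpillar beyond $e$ is unaffected. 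This is also why the two lemmas are indispensable: they force precisely the compatible pair, $PC_2$ for the sail and $PC_4$ for the hull. With (1) replaced by this statement, your items (2) and (3) and your swap argument are the right ones.

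Separately, your non-interference step does not follow from the definition of sides. That definition only says on which side of the chain each prime caterpillar lies; it does not prevent the rest of a bi-infinite chain from returning next to the hull's arc. The paper does not address this point either, so only the case check in your fallback would actually settle it.
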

\begin{proof}
This follows directly from Lemmas \ref{a} and \ref{b}.
\end{proof}

In the figures of this section, whenever we can use both the hull and the sail of the same boat, these sea caterpillars being equivalent, we will always use the hull to avoid overloading the figures unnecessarily (while considering both equivalent cases each time).

\begin{proposition} \label{prop:élimination de cap 2 et cap 3}
No bi-infinite fully leafed caterpillar contains a cape 2 or a cape 3.
\end{proposition}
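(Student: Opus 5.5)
I would argue by contradiction, along the lines of Lemmas \ref{BRB hexagone}, \ref{a} and \ref{b}. Suppose a bi-infinite fully leafed caterpillar $\mathcal{C}$ contains a cape~2 (resp.\ cape~3). Capes are built on an HBS star, so I would first take the kingdom of the relevant star type from Figure~\ref{fig:royaumes HBS}, together with the purple HBS chain of the cape from Figure~\ref{fig:chenilles marines}~(h) (resp.\ (i)). This fixes the colours of the star-graph vertices along the chain and around the star. Next I would go through the angles of the chain. Whenever an angle of $\frac{8\pi}{5}$ appears, the prime caterpillar there must be $PC_4$. By Corollary~\ref{prop:serpente}, the sides of all the other prime caterpillars along the chain are then forced. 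Lemma~\ref{prop:angle} then reduces each vertex to a short list of possible prime caterpillar types.

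The case analysis runs over the two possible end choices of the cape (the light-brown tiles). Three tools prune it:
\begin{enumerate}
\item[(i)] Any red-centred star carrying an angle of $\frac{4\pi}{5}$ forces $PC_1$, which is excluded by Proposition~\ref{prop:CP1}.
\item[(ii)] Lemmas~\ref{lem:4 pi sur 5} and \ref{prop:4 pi sur 5} forbid two consecutive $\frac{4\pi}{5}$ angles. They also forbid continuing from a $\frac{4\pi}{5}$ angle towards the side where the grafting tile does not lie.
\item[(iii)] The neighbourhoods of Figure~\ref{fig:Voisinages HBS} restrict which colours may follow: no red next to red, and no blue next to blue at a blue vertex.
\end{enumerate}
In each surviving branch I would extend the HBS chain one or two prime caterpillars beyond the cape at each end, within the kingdom of the star. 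The aim is to show that every continuation falls into one of three outcomes. The first is a forced $\frac{4\pi}{5}$ angle that contradicts Lemma~\ref{lem:4 pi sur 5}. The second is a $PC_1$. The third is a forced $\frac{6\pi}{5}$ angle (since no $\frac{2\pi}{5}$ angle exists) that turns the chain back around the HBS star and closes a cycle, exactly as in the proof of Lemma~\ref{BRB hexagone}, so that $\mathcal{C}$ is not a tree.

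The main obstacle is the middle of the cape. There the HBS chain hugs the star and several angles are a priori free among $\frac{4\pi}{5}$ and $\frac{6\pi}{5}$, so the number of branches grows. My first move is to use the alternation of sides from Corollary~\ref{prop:serpente}: fixing the side of one prime caterpillar on the star fixes all the others. I expect this to leave essentially one candidate per end choice. I would then check that candidate against the star kingdom, using the tile-level picture of Figure~\ref{fig:chenilles marines}, to exhibit either a cycle or an unavoidable adjacency between two caterpillar tiles that breaks inducedness. By the reflection symmetry of the star kingdoms, cape~3 should follow from the same argument as cape~2, up to relabelling the end choices.
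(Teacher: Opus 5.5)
Your overall strategy is the paper's. Appendix~A follows every possible extension of the cape's HBS chain, starting from the kingdom of the underlying HBS tiles, and kills each branch with earlier lemmas and propositions; it then gets cape~3 from cape~2 by symmetry. As planned, however, your case analysis would not close.

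First, you never split on the star type. A cape~2 can be built on an S1 star or on an S2 star, whose kingdoms (Figure~\ref{fig:royaumes HBS}) differ, so two separate analyses are needed. Second, and more seriously, you propose to work only inside the star's kingdom and to go just one or two prime caterpillars past each end. For an S2 star this window is too small. The colours of several star-graph vertices next to the chain are left undetermined (yellow), and whether a continuation dies depends on them. The missing step is to branch on each such vertex (red or green) and add the P2 tiles forced in each subcase. The patch then has to be enlarged step by step until every continuation reaches a dead end. Nothing in your plan supplies this, and your expectation of ``essentially one candidate per end choice'' is unsupported.

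A few smaller points. Once the chain leaves the star it passes through boats and hexagons, so your pruning set should include the sea-level eliminations already proved (Lemma~\ref{BRB hexagone} and Lemmas~\ref{a} and~\ref{b}), not only the prime-caterpillar ones. Your tool (i) is the converse of what the proof of Proposition~\ref{prop:CP1} records (a $PC_1$ sits at a red vertex), so it needs its own verification. Finally, the obstacle you single out is misplaced. The cape's own chain is given in Figure~\ref{fig:chenilles marines}, and only its two end choices are free, so there is no branching in the middle of the cape. All the real branching happens in the continuation beyond the star, which is exactly where your plan is too local.
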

\begin{proof}
It is enough to examine the possible extensions of a cape 2 and a cape 3, and observe that none of them can be bi-infinite. See Appendix A.
\end{proof}

\begin{proposition}\label{c}
Any cape 4 sea caterpillar built on a S1 star does not belong to any bi-infinite fully leafed caterpillar.
\end{proposition}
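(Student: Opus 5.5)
We argue by contradiction. Suppose a cape 4 built on an S1 star of the HBS tiling is contained in a bi-infinite fully leafed caterpillar $\mathcal{C}$. We follow the elimination strategy already used for Lemma \ref{BRB hexagone}, Lemma \ref{b} and Proposition \ref{prop:élimination de cap 2 et cap 3}. The idea is to follow the HBS chain of $\mathcal{C}$ outward from the cape and show that every admissible continuation is blocked after finitely many prime caterpillars.

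The first step is to fix the geometry. We take the kingdom of the S1 star from Figure \ref{fig:royaumes HBS}~(d) and place on its boundary the purple chain of the cape 4 from Figure \ref{fig:chenilles marines}~(j). Since an S1 star has exactly one red vertex on its border, this determines the colors of the chain vertices up to reflection; we handle both reflections, since we distinguish them. Wherever an angle of $\frac{8\pi}{5}$ occurs along the chain, the remark after Figure \ref{fig:chenilles marines} forces the prime caterpillar there to be $PC_4$. Corollary \ref{prop:serpente} then fixes the side of every prime caterpillar along the cape. Consequently only the two end prime caterpillars, marked in light brown, remain to be chosen, and each allows two choices.

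The second step is to extend the caterpillar at each end. The rules to apply are the following:
\begin{itemize}
\item Lemma \ref{prop:angle} gives the admissible angle types.
\item Lemmas \ref{lem:4 pi sur 5} and \ref{prop:4 pi sur 5} forbid two consecutive $\frac{4\pi}{5}$ angles and restrict on which side a grafting tile can lie.
\item Proposition \ref{prop:CP1} excludes $PC_1$, so every red center must be adjacent to $PC_2$ or $PC_4$.
\item The vertex neighborhoods of Figure \ref{fig:Voisinages HBS} restrict the possible colors of the next vertex.
\end{itemize}
When the chain enters a neighbouring HBS tile and forms a sea caterpillar, we immediately discard it if it is a cape 2 or a cape 3, by Proposition \ref{prop:élimination de cap 2 et cap 3}. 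If it is a dock 2, we impose the $PC_2$ condition of Lemma \ref{BRB hexagone}. If it is a sail or a hull, the lemma after Lemma \ref{b} shows the two are interchangeable, and we use Lemma \ref{b} to force $PC_4$ at the red vertices.

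Each branch of this case tree should end in one of three ways: a forced pair of consecutive $\frac{4\pi}{5}$ angles, a forced $PC_1$, or a closed loop of the HBS chain around a hexagon or star. A closed loop produces a cycle in $\mathcal{C}$, exactly as in the proof of Lemma \ref{BRB hexagone}.

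The main obstacle is the vertex of the S1 kingdom where the boundary color is undetermined (a yellow vertex). There the case tree branches into the neighbouring HBS tiles, and enough of the kingdom must be determined to force colors. My first attempt will use the red vertex of the S1 star. Its neighbourhood in Figure \ref{fig:Voisinages HBS}~(a) excludes red neighbours, and Proposition \ref{prop:CP1} forces the prime caterpillar at that red center to be $PC_2$ or $PC_4$. I expect that either choice compels a $\frac{4\pi}{5}$ angle adjacent to another $\frac{4\pi}{5}$ angle on the far side, or else makes the chain wrap around the star. If that fails, I will inflate the S1 kingdom once, as in Theorem \ref{thm 1}, to obtain enough forced tiles to close the remaining cases. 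The full case analysis would be recorded in figures in an appendix, as was done for capes 2 and 3.
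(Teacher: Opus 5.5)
\textbf{Gap: the decisive case analysis is missing.} Your strategy is the same as the paper's. The proof of Proposition~\ref{c} (Appendix~B, Figure~\ref{fig:cap 4 cas pas prolongeable}) is exactly an exhaustive tree of attempted extensions of a cape~4 drawn on the S1 kingdom. Each branch is closed either by the earlier lemmas or by reaching a sea caterpillar already shown to be non-extendable (capes~2 and~3, highlighted in bright yellow-green).

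Your proposal stops exactly where that proof begins. ``Each branch of this case tree should end in one of three ways'' and ``I expect that either choice compels'' a double $\frac{4\pi}{5}$ or a wrap-around are predictions, not arguments. For a statement of this kind the finite enumeration \emph{is} the proof.

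Nothing in the generic rules you list forces the tree to close. By Theorem~\ref{thm:deuxième chenille infinie}, some cape~4 does extend to a bi-infinite fully leafed caterpillar, and your rules apply verbatim there. So the contradiction must come from the specific S1 kingdom, and that is precisely what you have not worked out. Even the premises of your ``first attempt'' are assumed rather than read off the kingdom: that the unique red vertex of the S1 star lies on the cape's chain, and that it fixes the colouring up to reflection.

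\textbf{The fallback would not work.} Inflating the S1 kingdom ``as in Theorem~\ref{thm 1}'' produces a finer-scale patch in which a \emph{different} caterpillar is sought. It gives no new information about the surroundings of the original cape. More fundamentally, the kingdom of your starting configuration is by definition the largest patch common to all tilings containing it, so no procedure can supply ``more forced tiles'' beyond it.

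Where the kingdom leaves a vertex colour undetermined, the correct move is to branch on that colour, respecting the constraints of Figure~\ref{fig:Voisinages HBS}. Then take the kingdom of the enlarged patch in each subcase, and iterate until every branch is closed. This is how Appendix~A, Case~2 proceeds, and it is what Figure~\ref{fig:cap 4 cas pas prolongeable} records for a cape~4 on an S1 star.
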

\begin{proof}

It is enough to examine the possible extensions of a cape 4, and observe that none of them can be bi-infinite. See Appendix B.
\end{proof}

\begin{corollary}
A bi-infinite fully leafed caterpillar never contains a cape built on a S1 star (see Figure \ref{fig:protuiles HBS}).
\end{corollary}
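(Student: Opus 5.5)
The corollary asserts that no bi-infinite fully leafed caterpillar contains a cape built on an S1 star. The plan is to reduce it to the two elimination results just proved, Proposition \ref{prop:élimination de cap 2 et cap 3} and Proposition \ref{c}. The proof is a finite case distinction over the four capes of Figure \ref{fig:chenilles marines}, sorted by the type of HBS star (S0, S1, S2) on which each can be built.

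First, I would use the kingdoms of the HBS stars in Figure \ref{fig:royaumes HBS} to list, for each star type, which capes it can support. The tool is the vertex colours of the purple chain of each cape. These colours are fixed by whether the corresponding P2 stars are adjacent to 0, 1 or 2 suns. A cape sits on an HBS star of type S1 exactly when its chain passes through, or is bordered by, the unique red vertex on that star's boundary in the prescribed way.

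The expected outcome of this check is that a cape built on an S1 star is necessarily a cape 2, a cape 3, or a cape 4. In particular, cape 1 should be incompatible with S1. The reason is that the colour sequence of its chain, read against Figure \ref{fig:Voisinages HBS}, forces either no red vertex or two red vertices on the underlying star's border. This is the step I expect to be the main obstacle, since it is a purely pictorial verification. I would carry it out by matching each cape's chain against the S1 kingdom up to rotation and reflection, recalling that reflections of sea caterpillars are distinguished. If cape 1 turned out to fit an S1 star after all, the argument would need an additional elimination in the style of Appendix B, and the corollary would not follow from the two propositions alone.

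Once the case list is fixed, the conclusion is immediate. Suppose a bi-infinite fully leafed caterpillar $\mathcal{C}$ contains a cape built on an S1 star. By the case analysis, that cape is a cape 2, a cape 3, or a cape 4 on an S1 star. The first two cases are excluded by Proposition \ref{prop:élimination de cap 2 et cap 3}, and the last by Proposition \ref{c}. This is a contradiction.
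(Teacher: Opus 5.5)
Your proposal is correct and follows essentially the same route as the paper: the paper observes that a cape 1 is always built on an S2 star, so it never sits on an S1 star (the same figure-based check you single out as the main step). It then excludes capes 2 and 3 outright by Proposition \ref{prop:élimination de cap 2 et cap 3}, and a cape 4 on an S1 star by Proposition \ref{c}.
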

\begin{proof}
A cape 1 is always built on a S2 star. Next, from Proposition \ref{c}, if a bi-infinite fully leafed caterpillar contains a sea caterpillar cape 3, it is built on a S0 star. Finally, cape 2 and cape 3 sea caterpillars cannot belong to a bi-infinite fully leafed caterpillar (from Proposition~\ref{prop:élimination de cap 2 et cap 3}).
\end{proof}

\begin{theorem}\label{thm:deuxième chenille infinie}
There exists a bi-infinite fully leafed caterpillar containing a cape 4.
\end{theorem}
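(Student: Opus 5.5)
The plan mirrors the proof of Theorem~\ref{thm 1}: build a self-similar object by inflation, now seeded by a cape~4 rather than a Porrier caterpillar. By Proposition~\ref{c}, a cape~4 in a bi-infinite fully leafed caterpillar must be built on an S0 star. So I will start from a cape~4 $K$ on an S0 star, with the prime caterpillars at its two ends chosen compatibly with Lemmas~\ref{prop:4 pi sur 5} and~\ref{lem:4 pi sur 5} and Proposition~\ref{prop:CP1}. I then consider the kingdom of $K$ (Figure~\ref{fig:royaumes HBS}, case S0) and apply successive inflations to it. The goal is an integer $p$ (one expects $p=3$ by analogy with Theorem~\ref{thm 1}) such that, after $p$ inflations, the inflated kingdom contains a fully leafed caterpillar $\mathcal{K}$. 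This $\mathcal{K}$ should be obtained by grafting prime caterpillars, contain a cape~4 in a central position, and extend $K$'s inflated image on both sides.

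The key steps, in order, are as follows.

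First, I enumerate the finitely many end-variants of a cape~4 on an S0 star. This gives a finite alphabet, analogous to $\{A,B,C,D\}$, indexed by the colours of the extremal star centres.

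Second, for each variant I inflate its kingdom $p$ times and exhibit, by the figure-based verification used throughout the paper, a caterpillar that is a grafting of copies of variants. Each such caterpillar must respect Corollary~\ref{prop:serpente}: consecutive prime caterpillars alternate sides of the HBS chain, angles are read off Lemma~\ref{prop:angle}, and the chain has no self-intersection. This defines a substitution $\Psi$ on the alphabet.

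Third, as in Figure~\ref{fig:greffages par inflation}, I check that two grafted variants inflate to the concatenation of their images. This shows $\Psi$ is a morphism, i.e.\ the grafting regions remain compatible after inflation.

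Fourth, I need $\Psi$ to place a fixed letter (the cape~4 letter, say $E$) strictly inside each image, $\Psi(X)=p_X E s_X$ with nonempty $p_X,s_X$. Then the same induction as in the corollary to Theorem~\ref{thm 1} shows that $\Psi^n(X)$ grows in both directions around a central $\Psi^{n-1}(E)$. The limit word is then bi-infinite.

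Fifth, I check that the result is a genuine fully leafed caterpillar. Acyclicity and connectedness are local: no cycle is created at any grafting region, which is verified on the finitely many two-letter factors. The caterpillar is fully leafed because all internal tiles have degree~3 except the grafting tiles, so it is a grafting of prime caterpillars and is saturated at each stage by Proposition~\ref{prop3}. The local isomorphism property then places it in every P2-graph, and the limit contains a cape~4 by construction.

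The main obstacle is the second step. One must actually find, inside an inflated S0 kingdom, a path of star-graph edges carrying prime caterpillars whose derived paths are realizable. Choosing $p$ too small may fail to produce an image containing a cape~4 again, and there is no a priori reason the cape~4 recurs at a fixed inflation depth. My first attempt is the brute-force route: inflate three times, search the inflated kingdom for HBS chains of angles $\frac{4\pi}{5},\frac{6\pi}{5},\frac{8\pi}{5}$ alternating sides, and test extendability using the already-established eliminations (Propositions~\ref{prop:CP1}, \ref{prop:élimination de cap 2 et cap 3}, \ref{c} and Lemmas~\ref{BRB hexagone}--\ref{b}) to prune. If a cape~4 does not reappear after three inflations, I would instead look for a substitution whose images contain some letter whose own iterated image contains a cape~4. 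This still suffices, provided the central-letter argument of the fourth step holds for a power $\Psi^k$.
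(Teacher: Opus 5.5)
Your architecture matches the paper's: inflate the kingdom of a cape-4-based seed three times, read off a caterpillar that is a grafting of pieces from a finite list, check that grafted pairs have compatibly grafted images, and pass to the limit. The genuine gap is the list itself.

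You take as alphabet only the end-variants of a cape~4, and you expect the inflated kingdom to carry a grafting of copies of them. That cannot close up. The only sea graftings involving a cape~4 that can occur in a bi-infinite fully leafed caterpillar graft it to docks (cases (o) and (p) of Figure~\ref{fig:greffages marins}). Accordingly, the images in the paper also contain Porrier caterpillars and a Super cape~1. So your $\Psi$ is undefined on the pieces that actually appear, and the two-letter checks in your third and fifth steps range over an unknown set. Changing the inflation depth, as in your fallback, does not cure this.

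The missing idea is an alphabet closed under the inflation. The paper's is $E=\{\text{Super cape 1},\ \text{Porrier A},\ B_1,\ B_2,\ C_1,\ C_2,\ D_1,\ \text{Super cape 4}\}$, and its letters are not mere sea-caterpillar types indexed by end colours:
\begin{itemize}
\item A Porrier caterpillar B or C has a different kingdom, hence a different thrice-inflated image, depending on whether it is grafted to a Super cape~1 at its green-centred end. This is the split $B_1/B_2$, $C_1/C_2$.
\item The Porrier D that is used is the one built on a boat ($D_1$).
\item Porrier A must carry an orientation reference, because it is reflection-symmetric while its image is not.
\end{itemize}
Without this context refinement, the image of a letter is not determined by the letter, and the substitution is not well defined.

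Two smaller points. First, your fourth step demands that the cape-4 letter sit strictly inside the image of \emph{every} letter. That is more than needed and need not hold for the Porrier letters. As in the paper, it suffices that the image of the seed (Super cape~4) contains a copy of the seed with nonempty material on both sides. The iterates of the seed are then nested and grow in both directions.

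Second, in your fifth step, Proposition~\ref{prop3} presupposes that $A$ is fully leafed, so it cannot be used to prove it. Also, having all internal tiles of degree $3$ except the grafting tiles does not by itself make a tree a grafting of prime caterpillars. Argue instead that each piece of the alphabet is a grafting of prime caterpillars. A finite stage with $m$ prime caterpillars then has $17m+1$ tiles and, by Lemma~\ref{lem2}, $8m+2$ leaves. This equals $L_{P2}(17m+1)$ by Proposition~\ref{prop:formule close}, so each stage is fully leafed.
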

\begin{proof}
We first define two new caterpillars, a \textit{Super cape 1} and a \textit{Super cape 4}, illustrated in Figure~\ref{Super capes}. They are obtained by grafting two prime caterpillars (one on each end of the sea caterpillar) onto a cape 1 (resp. cape 4) caterpillar.

\begin{figure}[H]
\centering
\begin{subfigure}{0.4\textwidth}
\centering
\includegraphics[width=\textwidth, trim=0cm 2cm 0cm 0cm, clip]{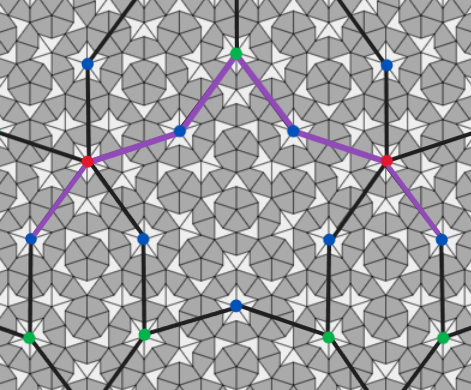}
\caption{Super cape 1}
\end{subfigure}
\begin{subfigure}{0.4\textwidth}
\centering
\includegraphics[width=\textwidth]{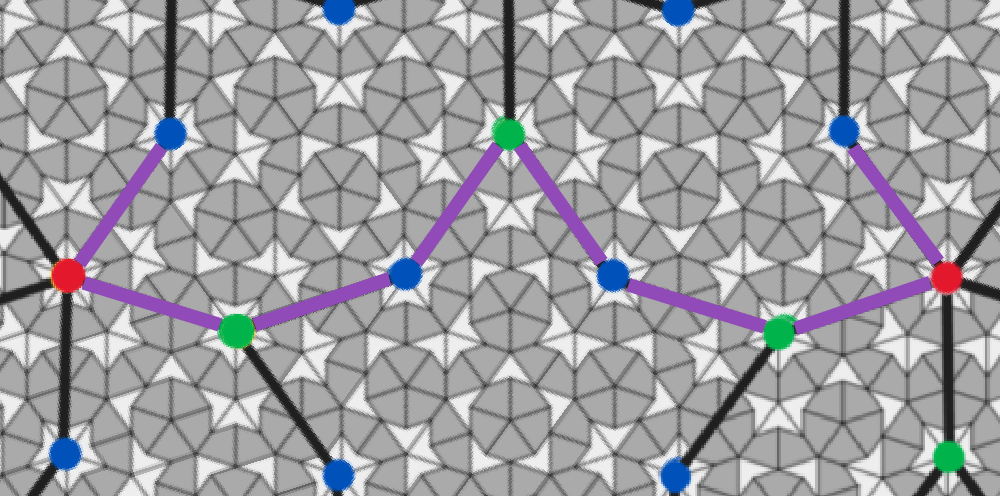}
\caption{Super cape 4}
\end{subfigure}
\caption{New caterpillars Super cape 1 and Super cape 4}
\label{Super capes}
\end{figure}

Next, we have to consider different possible kingdoms for a Porrier caterpillar B and for a Porrier caterpillar C. If a Porrier caterpillar B (resp. Porrier caterpillar C) is grafted to a Super cape 1 onto the prime caterpillar adjacent to the star of green center, at the end of the Porrier caterpillar, then we denote this Porrier caterpillar by \textit{Porrier caterpillar $B_1$} (resp. \textit{Porrier caterpillar $C_1$}). In all other cases, the Porrier caterpillar is denoted by \textit{Porrier caterpillar $B_2$} (resp. \textit{Porrier caterpillar $C_2$}). We denote a Porrier caterpillar D built on a boat by \textit{Porrier caterpillar $D_1$}.

Let $E$ be the set \{Super cape 1, Porrier caterpillar A, Porrier caterpillar $B_1$, Porrier caterpillar $B_2$, Porrier caterpillar $C_1$, Porrier caterpillar $C_2$, Porrier caterpillar $D_1$, Super cape 4\}. Recall that if a Porrier caterpillar is built on a boat, there exists an equivalent caterpillar that follows the hull instead of the sail. In the following figures, each Porrier caterpillar built on a boat is replaced by its equivalent version. Thus, there is no confusion between a Porrier caterpillar $D_1$ and a Super cape 1.

For each caterpillar in $E$, we apply three successive inflations to its kingdom and define a new caterpillar in the resulting patch (see Figure~\ref{fig:nouvelle chenille infinie}). Let $\psi$ be the function associating  each caterpillar in $E$ to the caterpillar defined in Figure~\ref{fig:nouvelle chenille infinie} after three inflations.\\

Each image of $\psi$ is thus a grafting of caterpillars from $E$. Now, for all caterpillars $C_1, C_2 \in E$ such that $C_1 \diamond C_2$ belongs to a caterpillar in the image of $\psi$, we show that $\psi(C_1)\diamond \psi(C_2)$ is a caterpillar obtained by grafting caterpillars from $E$. All possible cases (up to reflection) are shown in Figure~\ref{fig:valide pour greffages:nouvelle chenille infinie}.

\begin{figure}[H]
\centering
\begin{subfigure}{0.5\textwidth}
\centering
\includegraphics[width=\textwidth]{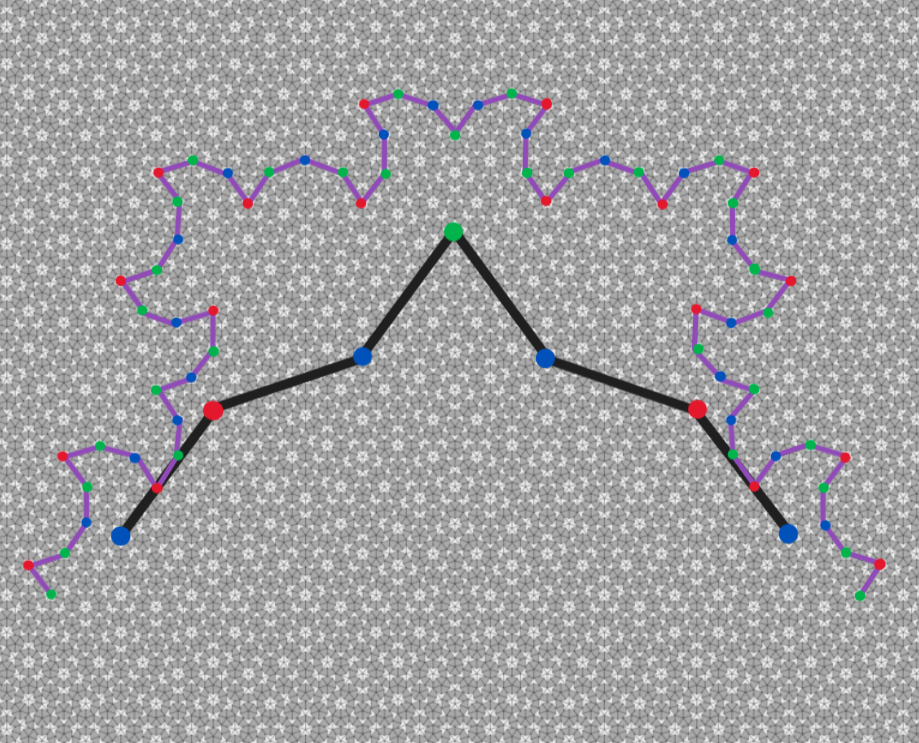}
\caption{$\psi(\text{Super cape 1})$}
\end{subfigure}
\begin{subfigure}{0.5\textwidth}
\centering
\includegraphics[width=\textwidth]{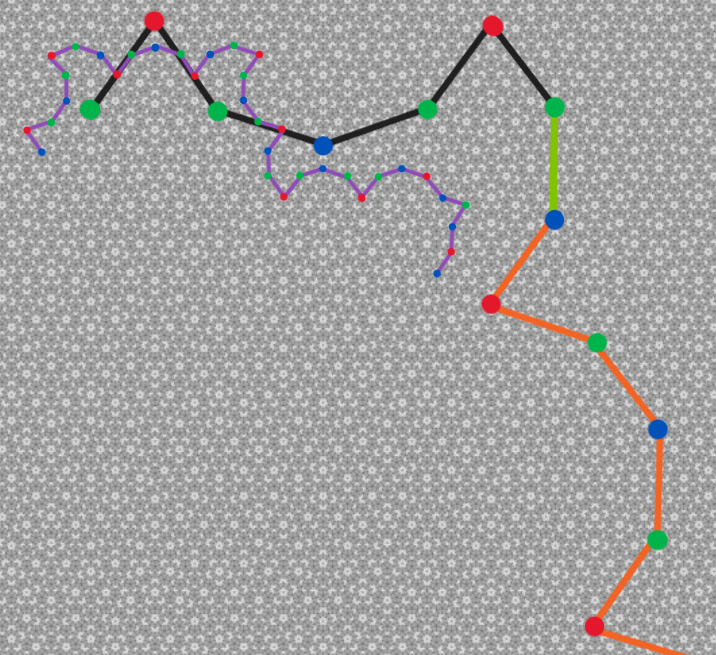}
\caption{$\psi(\textit{Porrier caterpillar A})$}
\end{subfigure}
\begin{subfigure}{0.5\textwidth}
\centering
\includegraphics[width=\textwidth]{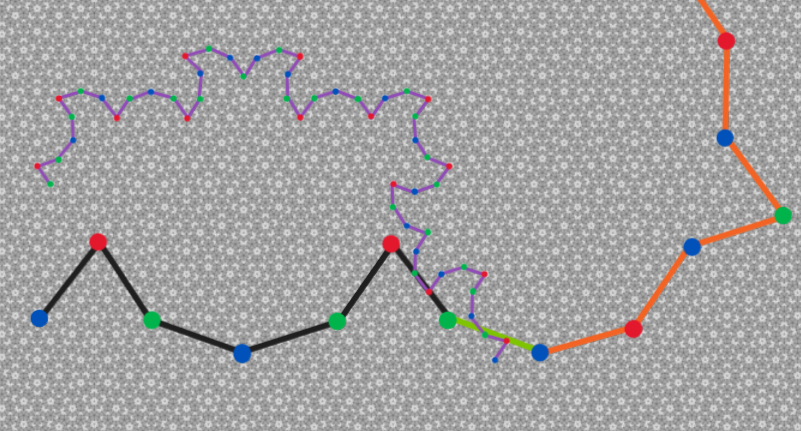}
\caption{$\psi(\textit{Porrier caterpillar $B_1$})$}
\end{subfigure}

\end{figure}

\begin{figure}[H]
\ContinuedFloat
\centering
\begin{subfigure}{0.5\textwidth}
\centering
\includegraphics[width=\textwidth]{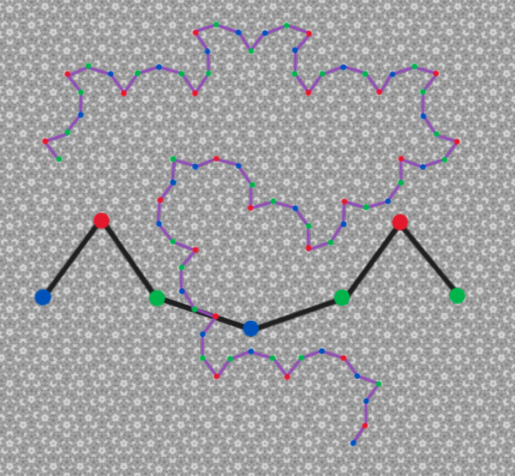}
\caption{$\psi(\textit{Porrier caterpillar $B_2$})$}
\end{subfigure}
\begin{subfigure}{0.5\textwidth}
\centering
\includegraphics[width=\textwidth]{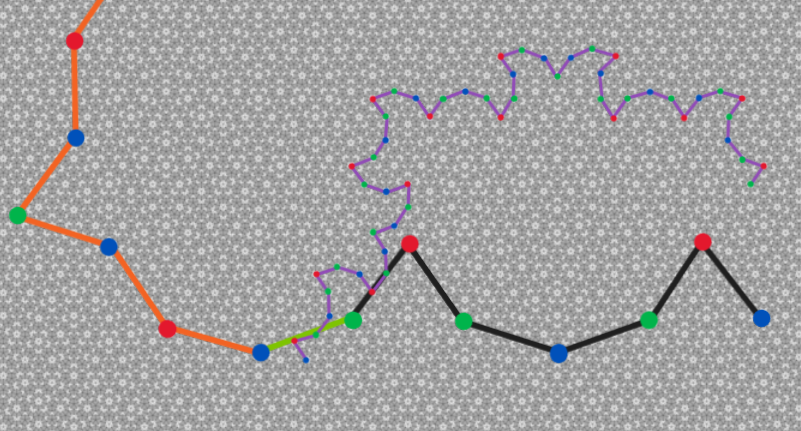}
\caption{$\psi(\textit{Porrier caterpillar $C_1$})$}
\end{subfigure}
\begin{subfigure}{0.5\textwidth}
\centering
\includegraphics[width=\textwidth]{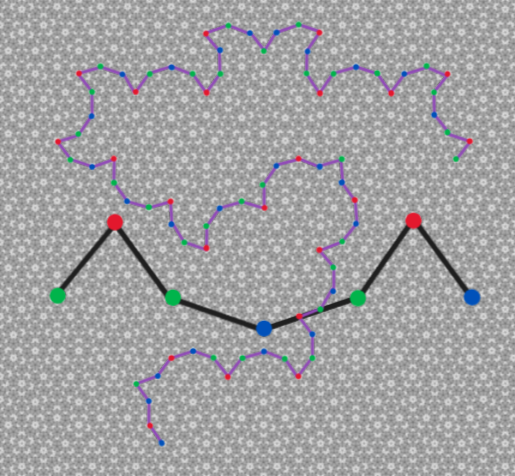}
\caption{$\psi(\textit{Porrier caterpillar $C_2$})$}
\end{subfigure}
\end{figure}

\begin{figure}[H]
\ContinuedFloat
\centering
\begin{subfigure}{0.5\textwidth}
\centering
\includegraphics[width=\textwidth]{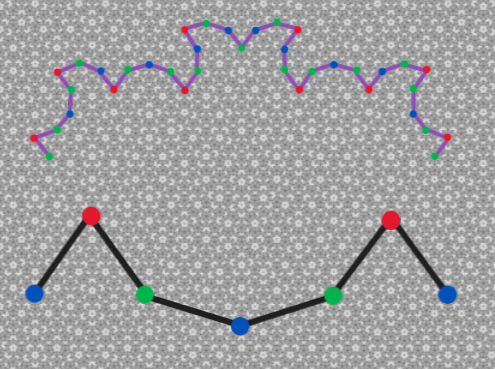}
\caption{$\psi(\textit{Porrier caterpillar $D_1$})$}
\end{subfigure}
\begin{subfigure}{0.5\textwidth}
\centering
\includegraphics[width=\textwidth]{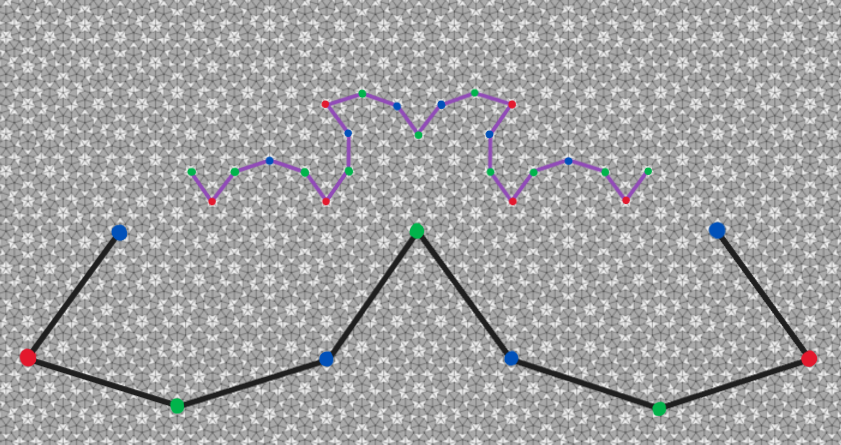}
\caption{$\psi(\text{Super cape 4})$}
\end{subfigure}
\caption{Images of $\psi$ for elements of $E$. The black chain represents the HBS chain of the preimage caterpillar (before applying three inflations). The image of $\psi$ is the caterpillar with HBS chain in purple. Some black chains have an extension (an orange chain and a light-green chain connecting the black and orange ones) used as a reference. More precisely, since a Porrier caterpillar A is symmetric by reflection but its image by $\psi$ is not, we fix a reference. In our constructions, a Porrier caterpillar A is always grafted to exactly one such reference, ensuring no ambiguity in the images of $\psi$. For other subfigures with such an extension, this serves to distinguish two different preimages for the same Porrier caterpillar (the image by $\psi$ differs).}
\label{fig:nouvelle chenille infinie}
\end{figure}

Starting from a Super cape 4, we apply $3n$ inflations to the patch containing it, for $n \in \mathbb{N}$. For every $n$, the resulting caterpillar necessarily contains a Super cape 4, since $\psi(\text{Super cape 4})$ contains a Super cape 4. Letting $n$ tend to infinity yields a bi-infinite fully leafed caterpillar.  Figure \ref{fig:valide pour greffages:nouvelle chenille infinie} guarantees that the graph obtained after $3n$ inflations is connected. It is also acyclic, since no cycles are formed in Figure \ref{fig:valide pour greffages:nouvelle chenille infinie}. Moreover, the kingdoms of two grafted caterpillars from $E$ (or their thrice-inflated kingdoms) intersect locally, in regions where we have verified that no cycle arises.
\end{proof}

\begin{figure}[H]
\centering
\begin{subfigure}{0.6\textwidth}
\centering
\includegraphics[width=\textwidth]{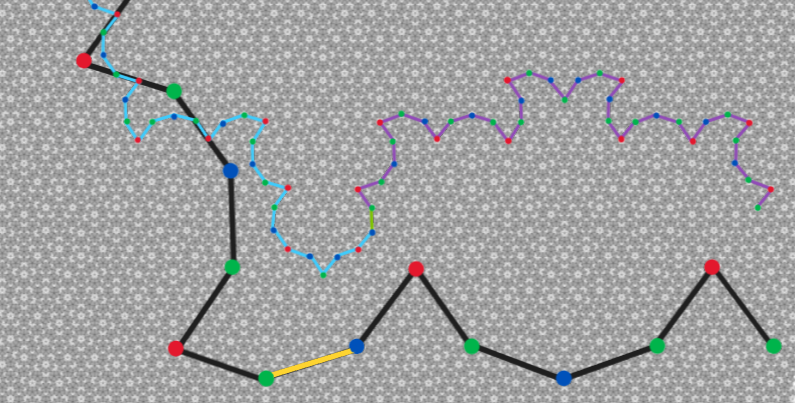}
\caption{$\psi(\textit{Porrier Caterpillar A}) \diamond \psi(\textit{Porrier Caterpillar $B_i$})$, $i\in \{1,2\}$}
\end{subfigure}
\begin{subfigure}{0.6\textwidth}
\centering
\includegraphics[width=\textwidth]{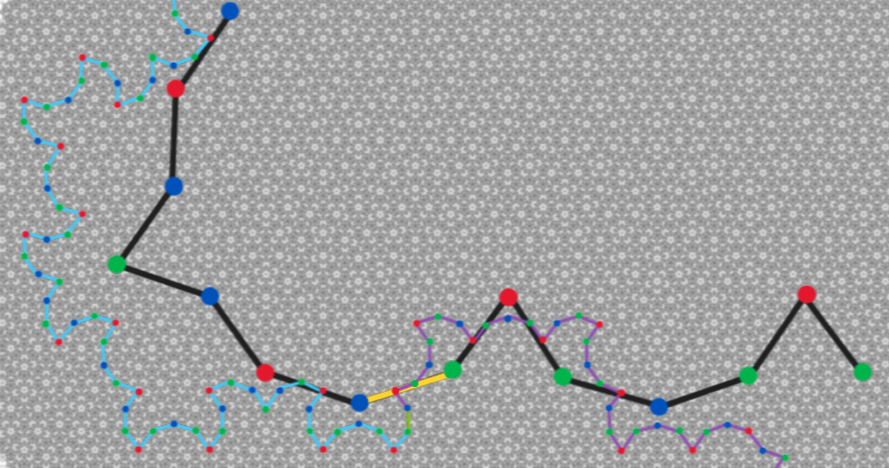}
\caption{$\psi(\textit{Porrier caterpillar A}) \diamond \psi(\textit{Super cape 1})$}
\end{subfigure}
\begin{subfigure}{0.6\textwidth}
\centering
\includegraphics[width=\textwidth]{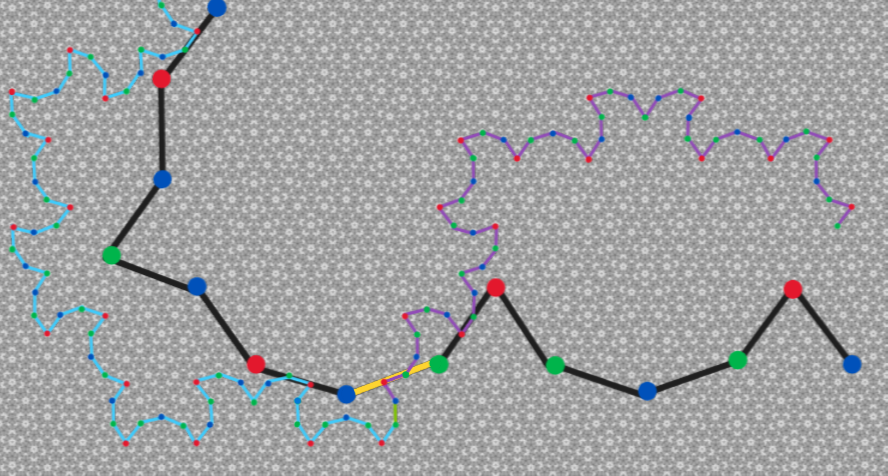}
\caption{$\psi(\textit{Super cape 1}) \diamond \psi(\textit{Porrier caterpillar $C_1$})$}
\end{subfigure}
\end{figure}

\begin{figure}[H]
\ContinuedFloat
\centering
\begin{subfigure}{0.6\textwidth}
\centering
\includegraphics[width=\textwidth]{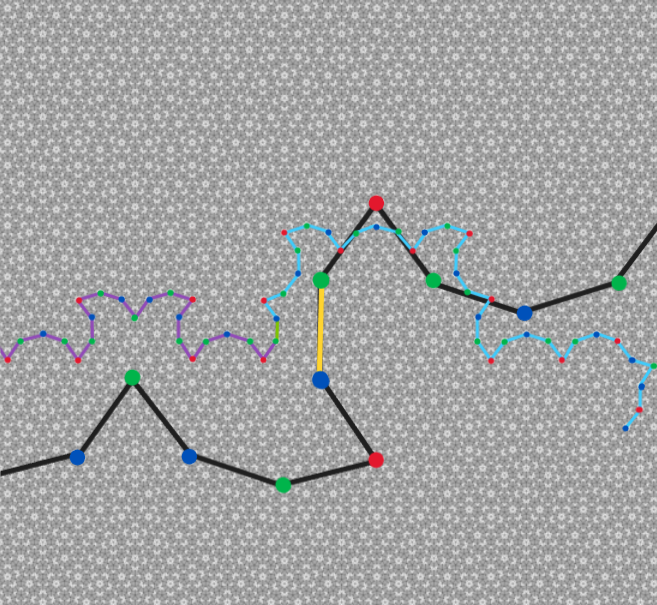}
\caption{$\psi(\textit{Super cape 4}) \diamond \psi(\textit{Porrier Caterpillar A})$}
\end{subfigure}
\end{figure}

\begin{figure}[H]
\ContinuedFloat
\centering
\begin{subfigure}{0.6\textwidth}
\centering
\includegraphics[width=\textwidth]{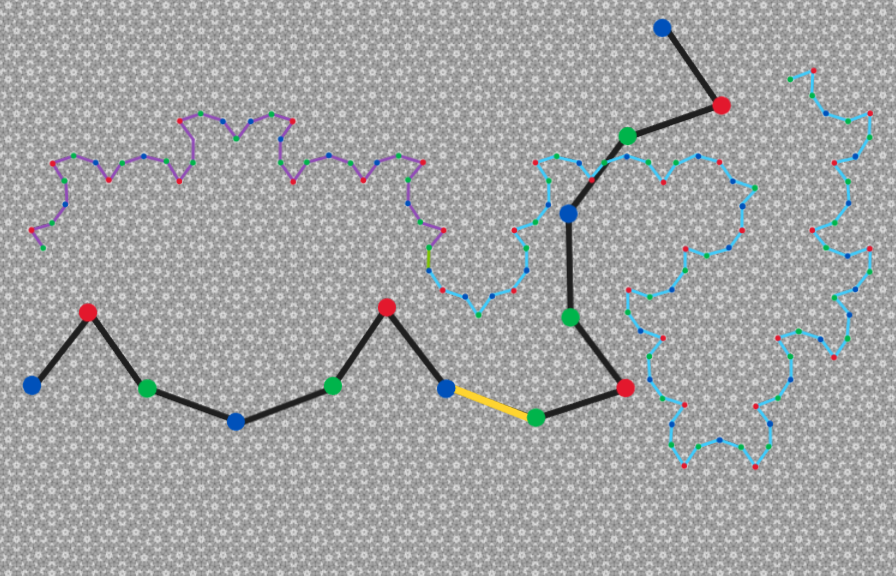}
\caption{$\psi(\textit{Porrier caterpillar D}) \diamond \psi(\textit{Porrier caterpillar $B_2$})$}
\end{subfigure}
\caption{Graftings $\psi(C_1)\diamond \psi(C_2)$ where $C_1, C_2 \in E$ are such that $C_1 \diamond C_2$ belongs to a caterpillar in the possible images of $\psi$. The yellow edges show where the graft is done between the two caterpillars that are preimages of $\psi$, and the light-green edges show where the graft is done between the two caterpillars that are images of $\psi$.}
\label{fig:valide pour greffages:nouvelle chenille infinie}
\end{figure}

\newpage
\begin{corollary}
The bi-infinite fully leafed caterpillar described in Theorem~\ref{thm 1} is not the only bi-infinite fully leafed caterpillar.
\end{corollary}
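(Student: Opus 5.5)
The plan is to compare the caterpillar constructed in Theorem~\ref{thm:deuxième chenille infinie} with the one constructed in Theorem~\ref{thm 1} and its uniqueness corollary. It suffices to show that the bi-infinite caterpillar of Theorem~\ref{thm:deuxième chenille infinie} contains a sea caterpillar, namely a cape~4, that never occurs in the caterpillar of Theorem~\ref{thm 1}. Two bi-infinite caterpillars that differ in which sea caterpillars they contain cannot coincide, even up to isometry, since sea caterpillars are determined by their purple HBS chains and these are intrinsic to the embedding.

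First, I recall that the caterpillar of Theorem~\ref{thm 1} is the limit of $\Phi^n(X)$. By the corollary following Theorem~\ref{thm 1}, it is a bi-infinite word over $\Sigma=\{A,B,C,D\}$, hence a grafting of Porrier caterpillars only. Every factor of this limit word is a factor of some $\Phi^n(D)$, and every grafting of two consecutive letters is realized as in Figure~\ref{fig:greffages par inflation}. I will then read off, from Figure~\ref{fig:Chenilles de Porrier} and the two grafting types of Figure~\ref{fig:greffages par inflation}, the colour sequence of the HBS chain of any such word. This sequence is a concatenation of blocks $X$-red-blue-green-blue-red-$Y$ with $X,Y\in\{\text{green},\text{blue}\}$. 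The HBS chain of a cape~4 passes through a star of an HBS tiling (Figure~\ref{fig:chenilles marines}), and the star it uses is of type S0, by Proposition~\ref{c}. So I need to check that the local colour/angle pattern of a cape~4 on an S0 star does not appear in any Porrier caterpillar or in any grafting of two consecutive Porrier caterpillars.

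Since each cape~4 spans only a bounded number of prime caterpillars, it is contained in at most two consecutive Porrier caterpillars. The check therefore reduces to a finite list: the four letters and the two-letter factors of the limit word. From \eqref{1}--\eqref{4}, these factors are $AD, DA, DC, CA, AB, BD$, together with the factors appearing at the junctions $\Phi(X)\Phi(Y)$. I expect every such factor to yield an HBS chain made only of sails/hulls, docks and capes~1 (consistent with the figures for Porrier caterpillars on boats), and never a cape~4.

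The main obstacle is precisely this exhaustive local comparison. It relies on figures, and one must make sure that the HBS-chain pattern of a cape~4 (its angle sequence, with an angle of $\frac{8\pi}{5}$ forcing $PC_4$ as noted after Figure~\ref{fig:chenilles marines}) cannot be produced across a grafting boundary of two Porrier caterpillars. If a direct pattern check is awkward, my fallback is an angle-count invariant: list the sequence of angles $\frac{4\pi}{5},\frac{6\pi}{5},\frac{8\pi}{5}$ determined by consecutive prime caterpillars in each letter, using Lemma~\ref{prop:angle} and Example~\ref{exemple}. I would then show that the cape~4 angle subsequence is not a factor of the angle word generated by $\Phi$. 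Either way, the caterpillar of Theorem~\ref{thm:deuxième chenille infinie} contains a cape~4 and the caterpillar of Theorem~\ref{thm 1} does not, so they differ, proving the corollary.
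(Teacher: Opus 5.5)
Your proposal is correct and takes essentially the paper's approach: the paper also separates the two constructions by noting that the caterpillar of Theorem~\ref{thm:deuxième chenille infinie} contains a cape~4, while no caterpillar produced by the construction of Theorem~\ref{thm 1} does. The paper only asserts the second fact; your reduction to a finite check over the single letters and the two-letter factors $AD, DA, DC, CA, AB, BD$ is a reasonable way to justify it (this set is indeed closed under the junctions $\Phi(X)\Phi(Y)$).
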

\begin{proof}
The constructions in Theorem~\ref{thm 1} and in  Theorem \ref{thm:deuxième chenille infinie} are clearly distinct. For instance, no bi-infinite caterpillar obtained from the construction of Theorem~\ref{thm 1} contains a cape 4, whereas the construction of Theorem \ref{thm:deuxième chenille infinie} does.
\end{proof}

\subsection{Elimination of sea graftings}

We continue the elimination of patterns in bi-infinite fully leafed caterpillars by ruling out graftings of sea caterpillars.

\begin{definition}
A \textbf{sea grafting} is defined as the grafting of two sea caterpillars.
\end{definition}

From Figures \ref{fig:chenilles marines} and \ref{fig:royaumes HBS} and from the previous results, we can list all the sea graftings that, at this point in the paper, are known to belong to a bi-infinite fully leafed caterpillar and those for which we have not yet proved that they cannot. This enumeration is presented in Figure \ref{fig:greffages marins}. Among the graftings shown in Figure \ref{fig:greffages marins}, those for which it is still unknown whether they belong to a bi-infinite fully leafed caterpillar are graftings (g), (h), (i), (j), (k), and (l). All the others do belong to a bi-infinite fully leafed caterpillar already studied in Theorem \ref{thm 1} or Theorem \ref{thm:deuxième chenille infinie}. Let us therefore examine the remaining graftings.

\begin{proposition}\label{d}
If the sea graftings (i) and (k) from Figure \ref{fig:greffages marins} are such that the hexagons of these docks are adjacent to the same boat, which itself is not adjacent to exactly two other boats (see Figure \ref{fig:configuration possiblement bi-infiniment prolongeable}), then these graftings cannot belong to a bi-infinite fully leafed caterpillar.
\end{proposition}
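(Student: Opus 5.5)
We argue by contradiction, in the same local, kingdom-based style as Lemma~\ref{BRB hexagone} and Proposition~\ref{prop:CP1}. Suppose a bi-infinite fully leafed caterpillar $\mathcal{C}$ contains the sea grafting (i) or (k) of Figure~\ref{fig:greffages marins}. Suppose also that the two docks' hexagons are adjacent to a common boat $\beta$, and that $\beta$ is not adjacent to exactly two other boats.

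\textbf{Fixing the local geometry.} We first take the kingdom of the configuration formed by the two hexagons and $\beta$, using the HBS kingdoms of Figure~\ref{fig:royaumes HBS}. Within it we determine the colours of the star-graph vertices on the boundary of $\beta$. Next we enumerate the HBS tiles that can surround $\beta$ when it is adjacent to zero, one, or three or more boats; in practice only a small number of cases should arise, by the neighbourhoods of Figure~\ref{fig:Voisinages HBS}. In each case the portion of the HBS chain of $\mathcal{C}$ along $\beta$ is forced. The argument is as follows.
\begin{itemize}
\item Lemma~\ref{BRB hexagone} fixes the prime caterpillar $PC_2$ at the red vertices adjacent to the blue vertex of each dock~2.
\item Corollary~\ref{prop:serpente} then forces the prime caterpillars to alternate sides of the HBS chain.
\item Lemma~\ref{prop:angle} converts each side choice into an angle.
\end{itemize}
This way the chain can be propagated deterministically around $\beta$ from both docks.

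\textbf{Deriving the obstruction.} For each case we follow the two propagated chains as they leave the hexagons and run along the boundary of $\beta$. We expect one of three obstructions, to be checked case by case.
\begin{enumerate}
\item Two consecutive angles of $\frac{4\pi}{5}$ appear, which contradicts Lemma~\ref{prop:4 pi sur 5}.
\item A $\frac{4\pi}{5}$ angle has its grafting leaves on the wrong side, which contradicts Lemma~\ref{lem:4 pi sur 5}.
\item A $PC_1$ appears, which contradicts Proposition~\ref{prop:CP1}.
\end{enumerate}
If none of these occurs, the two ends of the chain coming out of the two docks must close around $\beta$ into a cycle, as in the proof of Lemma~\ref{BRB hexagone}, and then $\mathcal{C}$ is not a tree. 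We also use the equivalence of sails and hulls, which holds because of Lemmas~\ref{a} and~\ref{b}. It lets us treat any sea caterpillar built on an adjacent boat through its hull only, and so halves the cases. Once a forced configuration contains a cape~2, a cape~3 or a cape~4 on an S1 star, Proposition~\ref{prop:élimination de cap 2 et cap 3} and Proposition~\ref{c} immediately exclude it.

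\textbf{Main obstacle.} The hardest step is making the case enumeration of the neighbourhood of $\beta$ complete. We must also show rigorously that the hypothesis ``not adjacent to exactly two other boats'' is exactly what prevents an escape route. In the excluded configuration the chain can presumably continue through the two neighbouring boats, which is precisely the situation left open in Figure~\ref{fig:configuration possiblement bi-infiniment prolongeable}. In every other configuration the boundary of $\beta$ should contain only stars and hexagons whose vertex colours trap the chain.

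To handle this we would first list the possible HBS vertex stars around $\beta$ using the kingdoms of Figure~\ref{fig:royaumes HBS}. We would then verify, figure by figure as in Appendices~A and~B, that every branch dies. We expect this to be a finite but delicate check, best presented as an appendix figure.
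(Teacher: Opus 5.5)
Your plan matches the paper's proof. Appendix C proves the statement by an exhaustive, figure-based enumeration of the possible HBS-chain extensions of grafting (k) in its kingdom, enlarged step by step. Every branch is killed with the earlier lemmas and propositions, and (i) follows from (k) by reflection, which would halve your work. The chain does not propagate deterministically: it branches on undetermined (yellow) vertex colours. The entire content of the proof is that finite check, which your proposal describes but does not yet carry out.
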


\begin{figure}[H]
\centering
\includegraphics[width=0.5\textwidth, trim=0cm 17cm 0cm 4cm, clip]{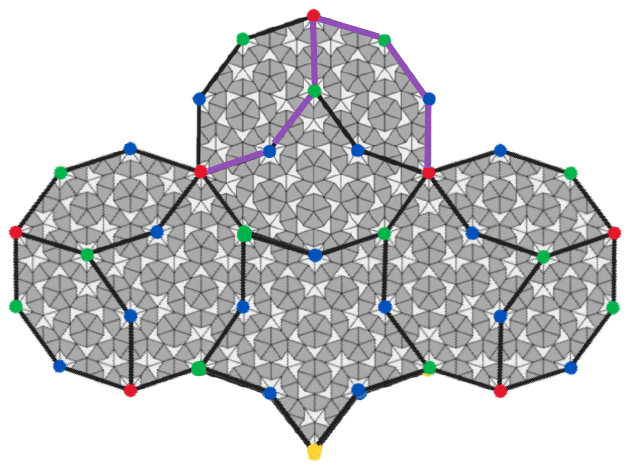}
\caption{Proposition \ref{d} states that if the sea grafting (k) does not occur on this patch (i.e. the boat is not adjacent to two other boats), then it cannot be extended into a bi-infinite fully leafed caterpillar. No claim is made for the case in which grafting (k) is constructed on this patch.}
\label{fig:configuration possiblement bi-infiniment prolongeable}
\end{figure}

\begin{proof}
It is enough to examine the possible extensions of these sea graftings and observe that none of them can be bi-infinite. See Appendix C.
\end{proof}

We were unable to determine whether the configuration not excluded in Proposition \ref{d} (see Figure \ref{fig:configuration possiblement bi-infiniment prolongeable}) could be such that the sea grafting (k) it contained can be extended into a bi-infinite fully leafed caterpillar. We believe that this might be the case, but that it would require a construction similar to that used in Theorem \ref{thm:deuxième chenille infinie} to establish it.

\begin{figure}[H]
\centering
\setlength{\tabcolsep}{0pt}
\begin{tabular}{@{}cccc@{}}
\includegraphics[width=0.2\textwidth, trim=0cm 6cm 0cm 0cm, clip]{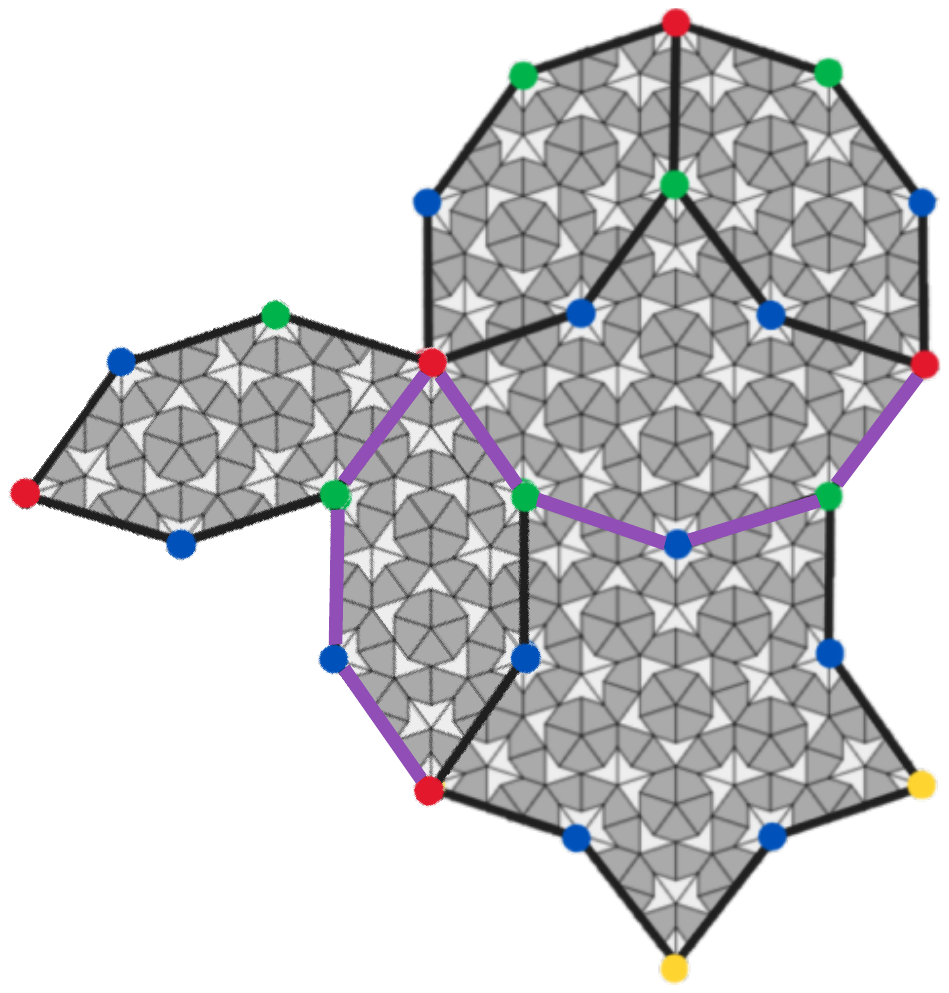} &

\includegraphics[width=0.3\textwidth, trim=0cm 13cm 0cm 0cm, clip]{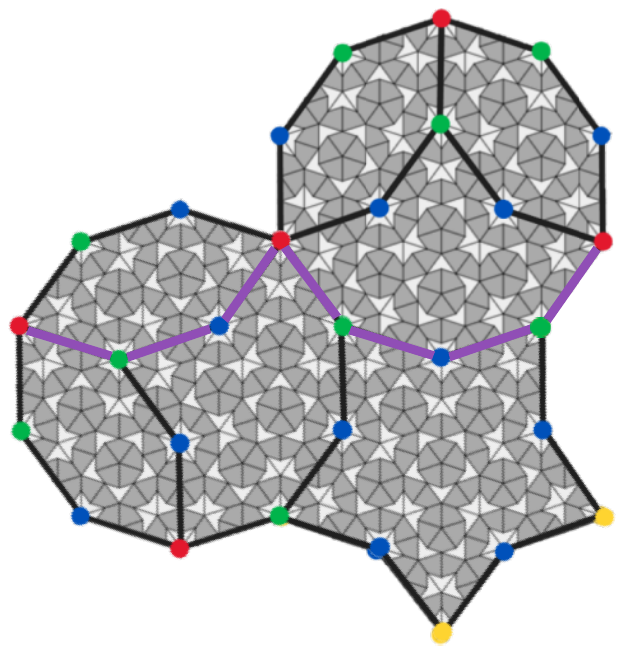} &

\includegraphics[width=0.2\textwidth, trim=0cm 6cm 0cm 0cm, clip]{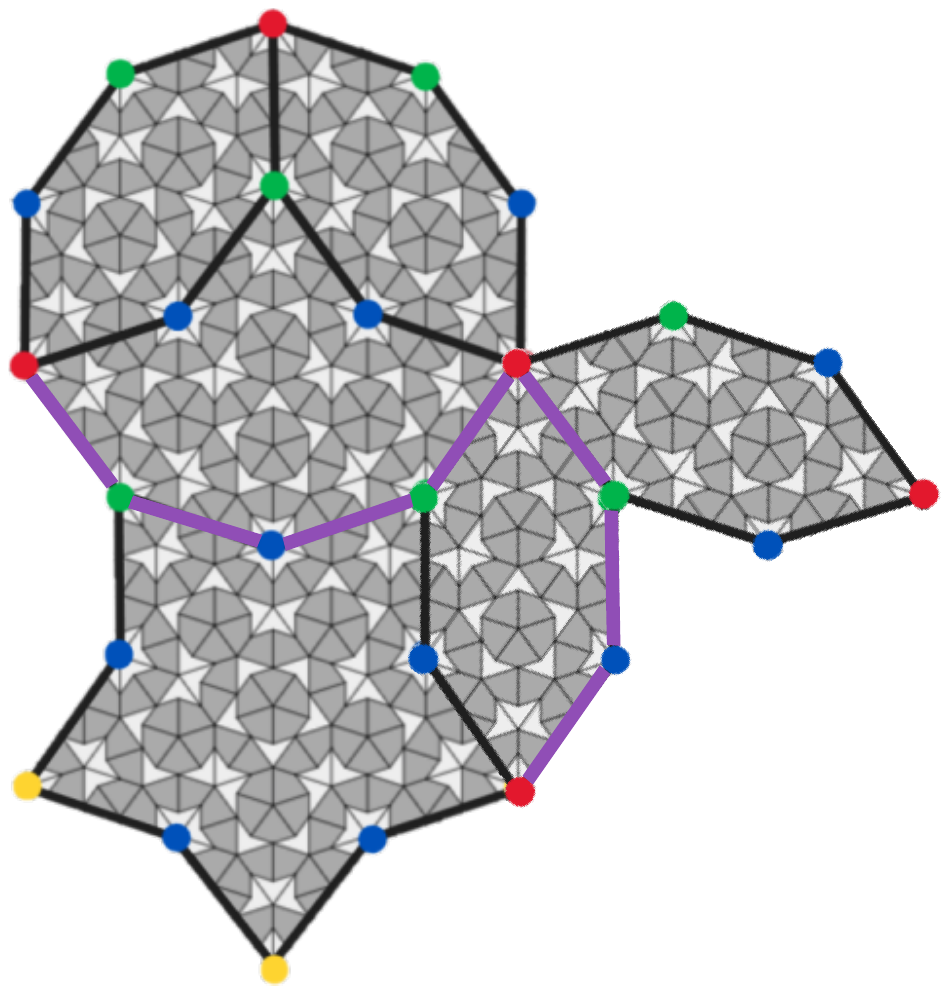} &

\includegraphics[width=0.3\textwidth, trim=0cm 15cm 0cm 0cm, clip]{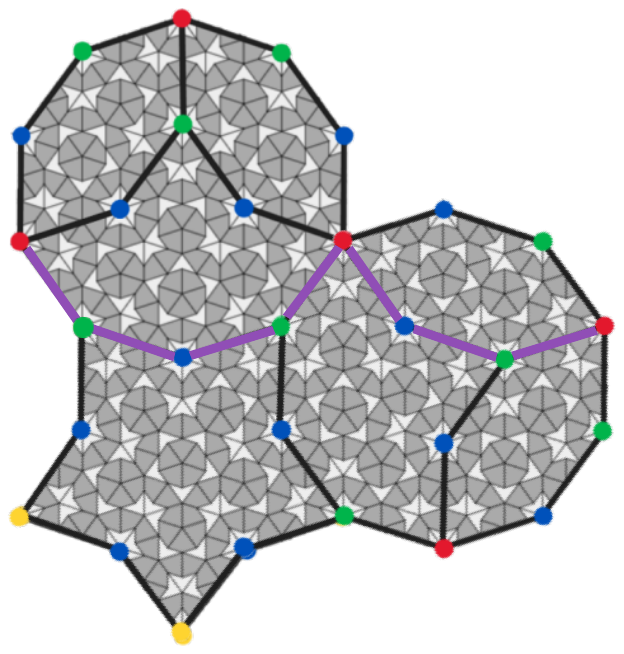} \\

    \small\shortstack{(a) dock 3 $\diamond$ hull\\ \hspace{14pt}(case 1)} &
    \small\shortstack{(b) dock 3 $\diamond$ hull\\ \hspace{14pt} (case 2)} &
    \small\shortstack{(c) hull $\diamond$ dock 1 \\ \hspace{3pt} (case 1)} &
    \small\shortstack{(d) hull $\diamond$ dock 1 \\ \hspace{3pt} (case 2)}\\

\includegraphics[width=0.2\textwidth, trim=0cm 7cm 0cm 0cm, clip]{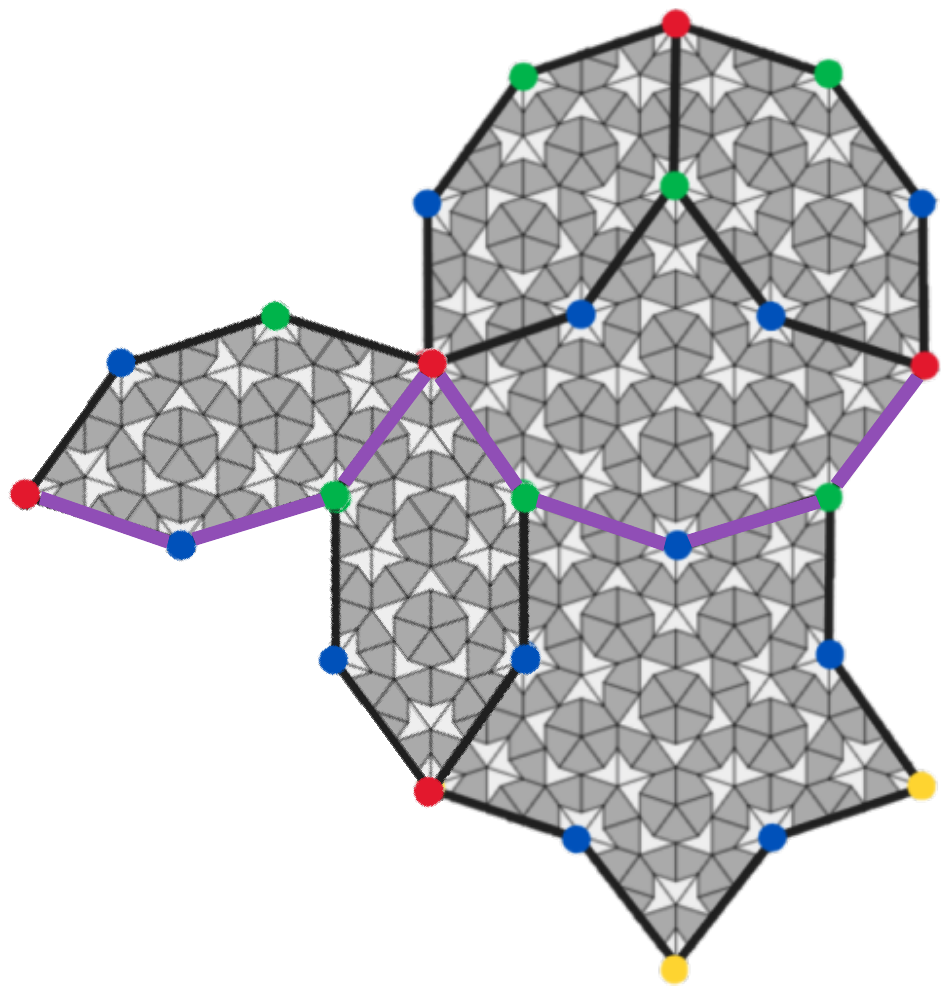} &

\includegraphics[width=0.2\textwidth, trim=0cm 4cm 0cm 0cm, clip]{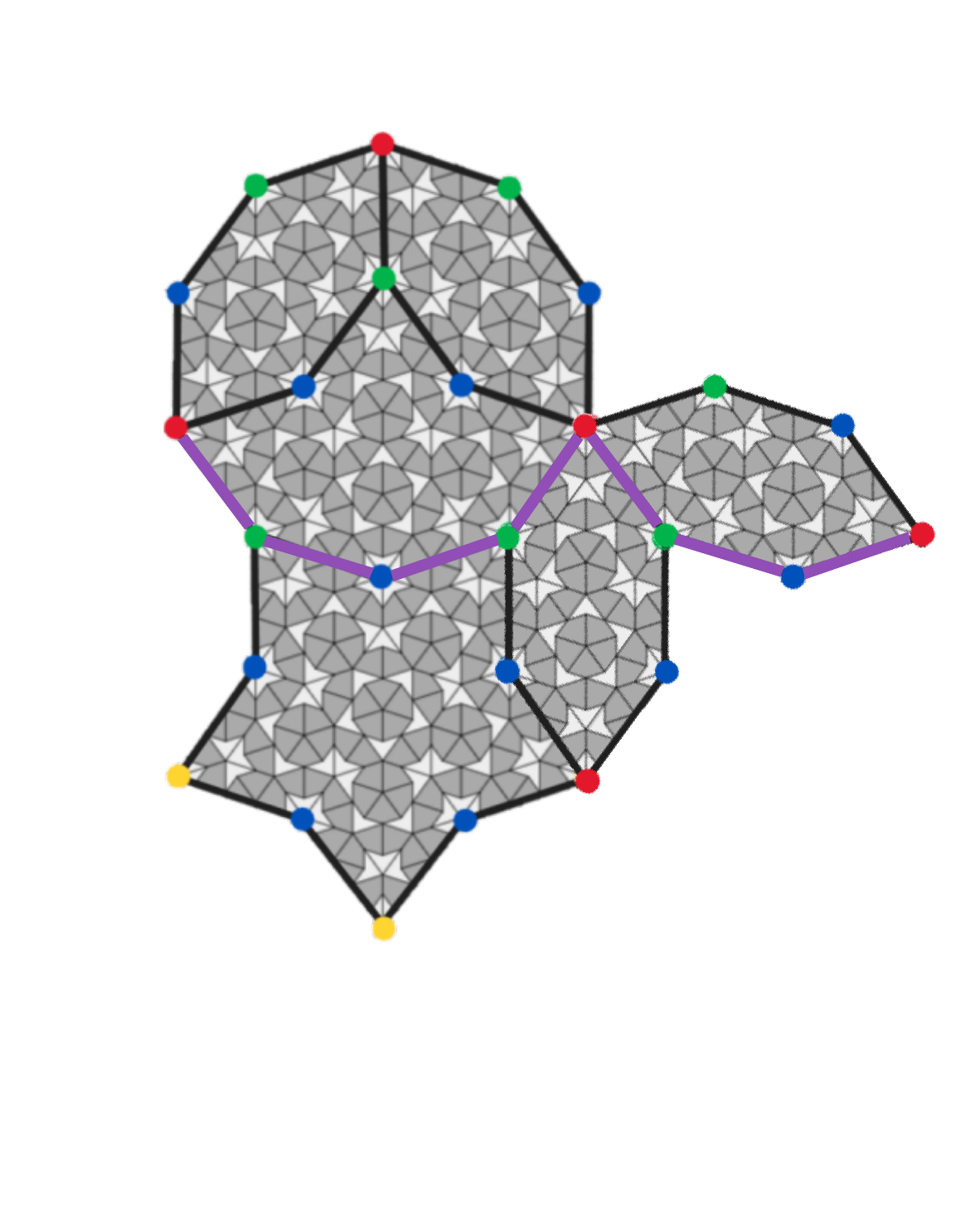} &

\includegraphics[width=0.2\textwidth, trim=0cm 17cm 0cm 0cm, clip]{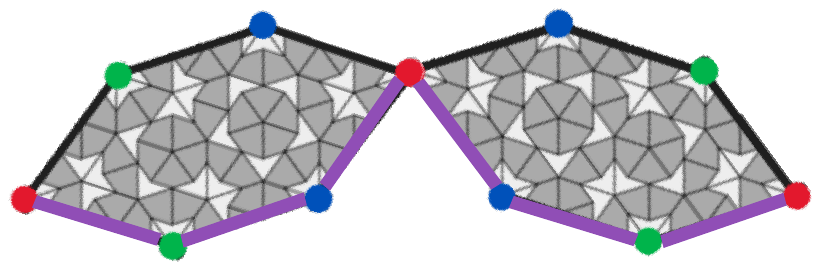} &
\label{fig27g}

\includegraphics[width=0.2\textwidth, trim=0cm 17cm 0cm 0cm, clip]{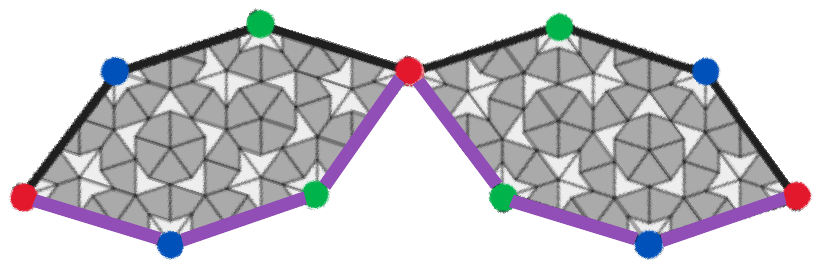} \\

    \small{(e) dock 4 $\diamond$ hull} &
    \small{(f) hull $\diamond$ dock 1} &
    \small{(g) dock 3 $\diamond$ dock 1} &
    \small{(h) dock 4 $\diamond$ dock 2}\\

\includegraphics[width=0.2\textwidth, trim=0cm 7cm 0cm 0cm, clip]{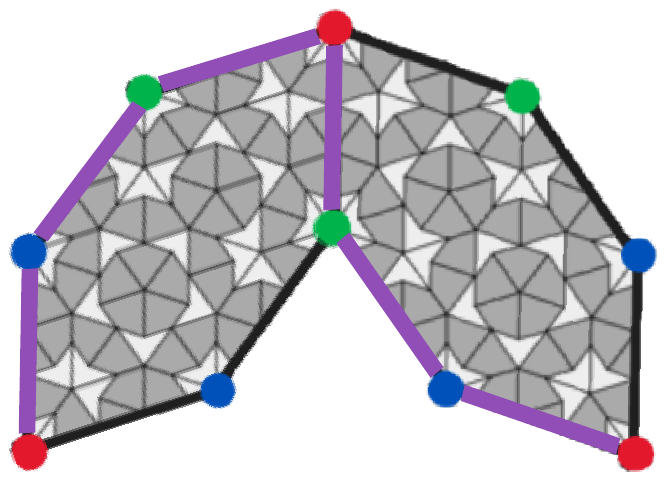} &

\includegraphics[width=0.2\textwidth, trim=0cm 18cm 0cm 0cm, clip]{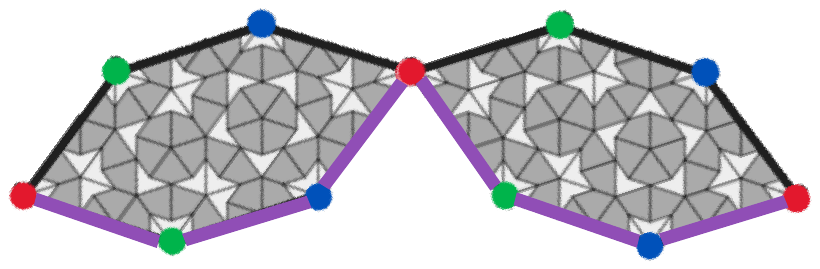} &

\includegraphics[width=0.2\textwidth, trim=0cm 7cm 0cm 0cm, clip]{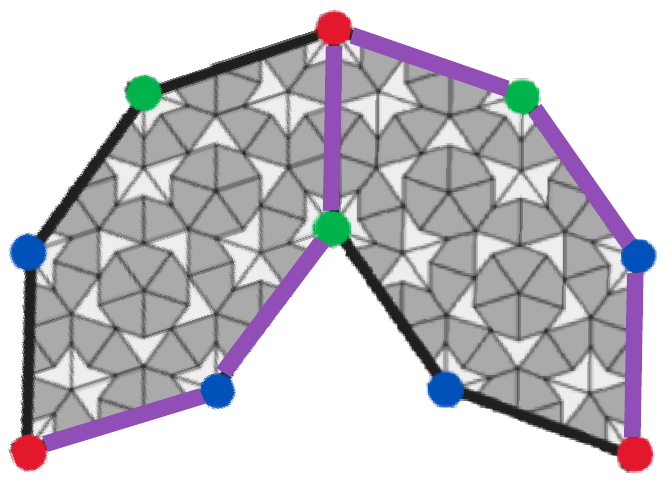} &

\includegraphics[width=0.2\textwidth, trim=0cm 18cm 0cm 0cm, clip]{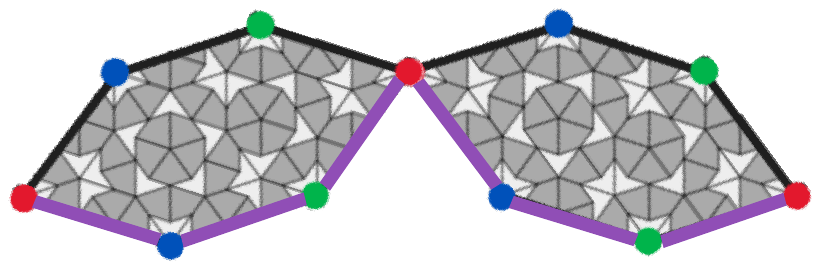} \\

    \small\shortstack{(i) dock 3 $\diamond$ dock 2 \\ \hspace{3pt} (case 1)} &
    \small\shortstack{(j) dock 3 $\diamond$ dock 2 \\ \hspace{3pt} (case 2)} &
    \small\shortstack{(k) dock 4 $\diamond$ dock 1 \\ \hspace{3pt} (case 1)} &
    \small\shortstack{(l) dock 4 $\diamond$ dock 1 \\ \hspace{3pt} (case 2)}\\

\includegraphics[width=0.2\textwidth, trim=0cm 10cm 0cm 0cm, clip]{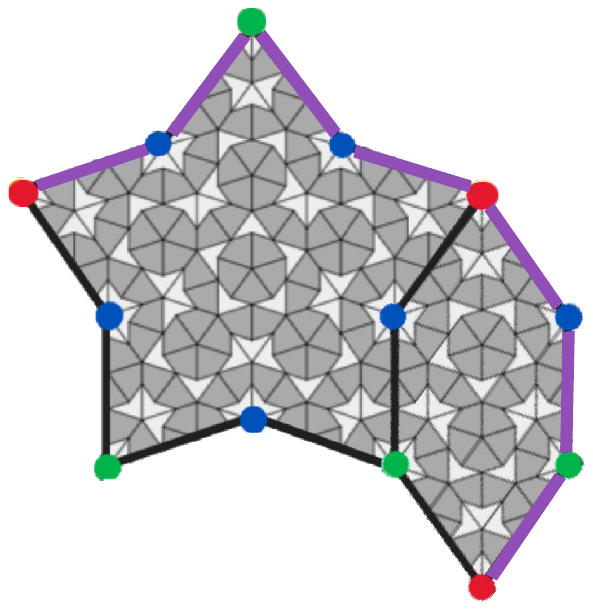} &

\includegraphics[width=0.2\textwidth, trim=0cm 10cm 0cm 0cm, clip]{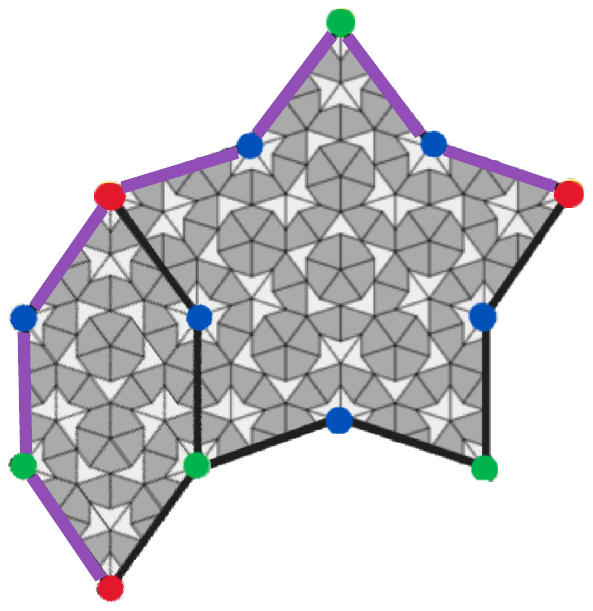} &

\includegraphics[width=0.2\textwidth, trim=0cm 10cm 0cm 0cm, clip]{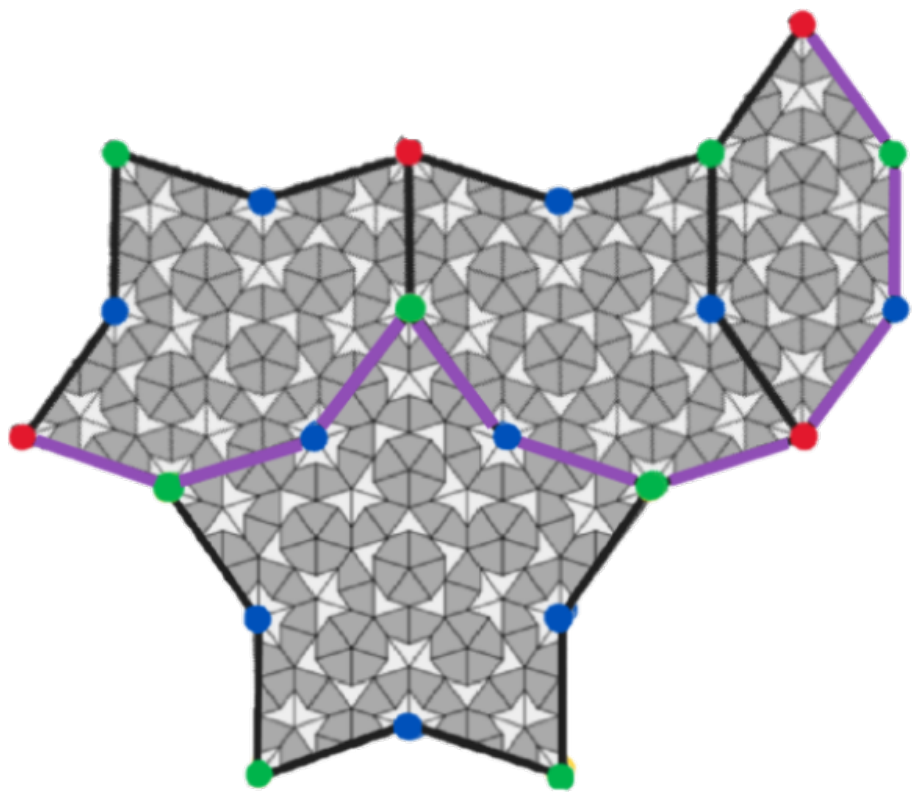} &

\includegraphics[width=0.2\textwidth, trim=0cm 10cm 0cm 0cm, clip]{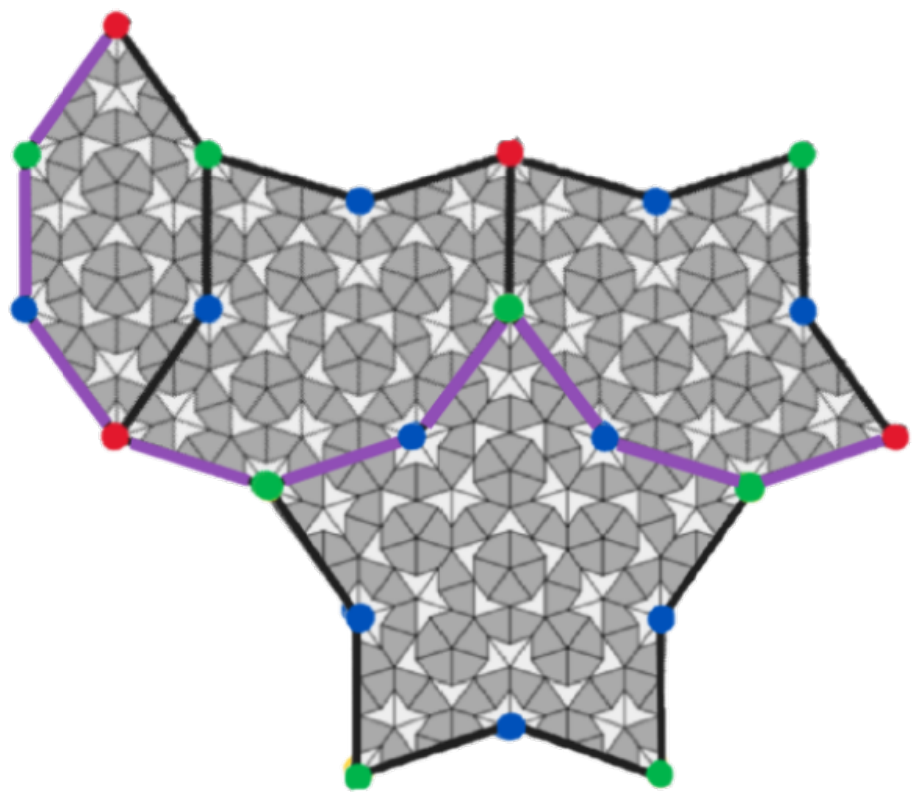}\\

    \small{(m) cape 1 $\diamond$ dock 2} &
    \small{(n) dock 4 $\diamond$ cape 1} &
    \small{(o) cape 4 $\diamond$ dock 3} &
    \small{(p) dock 1 $\diamond$ cape 4}

\end{tabular}
\caption{All sea graftings known to belong to a bi-infinite fully leafed caterpillar or not yet eliminated}
\label{fig:greffages marins}
\end{figure}

\newpage
\begin{proposition}\label{e}
The sea graftings (j) and (l) from Figure \ref{fig:greffages marins} do not belong to any bi-infinite fully leafed caterpillar.
\end{proposition}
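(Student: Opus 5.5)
The plan is to treat the two graftings separately and, in each case, follow the same exhaustive-extension strategy used for Propositions \ref{prop:élimination de cap 2 et cap 3}, \ref{c} and \ref{d}. I start from the HBS chain of the grafting, (j) dock~3~$\diamond$~dock~2 (case~2) or (l) dock~4~$\diamond$~dock~1 (case~2). I then build the kingdom of the patch formed by the two hexagons supporting the docks, using the HBS kingdoms of Figure~\ref{fig:royaumes HBS}. Next I enumerate every way to continue the HBS chain at both ends, using the vertex neighbourhoods of Figure~\ref{fig:Voisinages HBS}. At each step, the prime caterpillar attached to each new star is determined by the colour of its centre and the angle it must form (Lemma~\ref{prop:angle}). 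Because of Corollary~\ref{prop:serpente}, the side on which it lies is forced by its predecessor.

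Each branch is pruned with the local obstructions already established:
\begin{itemize}
\item Lemma~\ref{prop:4 pi sur 5} (no two consecutive angles of $\frac{4\pi}{5}$);
\item Lemma~\ref{lem:4 pi sur 5} (grafting tiles of a $\frac{4\pi}{5}$ prime caterpillar lie on the opposite side);
\item Proposition~\ref{prop:CP1} (no $PC_1$);
\item Lemma~\ref{BRB hexagone} (forced $PC_2$ next to the red vertex of a dock~2);
\item Lemmas~\ref{a} and \ref{b} for any boat encountered;
\item Proposition~\ref{prop:élimination de cap 2 et cap 3} and Proposition~\ref{c} whenever a continuation would produce a cape~2, a cape~3, or a cape built on an S1 star.
\end{itemize}
A branch also dies when the forced continuation closes the HBS chain around a hexagon or star, which creates a cycle in the caterpillar as in the proof of Lemma~\ref{BRB hexagone}. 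Branches whose continuation coincides with a sea grafting already eliminated, or with configuration (j) or (l) itself, are handled by the same analysis.

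For (j), I expect the key point to be the following. In case~2, the dock~3 and the dock~2 share the hexagon configuration in such a way that the red vertex $R$ of the dock~2 is forced (Lemma~\ref{BRB hexagone}) to carry a $PC_2$. Its angle of $\frac{6\pi}{5}$ then forces the continuation at the next vertex either into a cycle around the adjacent hexagon or into two consecutive $\frac{4\pi}{5}$ angles. For (l), I expect the dock~4 side to force a $PC_4$ (angle $\frac{8\pi}{5}$), which by the remark after Figure~\ref{fig:chenilles marines} fixes the whole chain along that side. The continuation beyond the dock~1 should then run into a cape~2/cape~3 or an S1 cape, both already excluded.

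The main obstacle is the bookkeeping of the case tree. The kingdoms only partially determine the star colours (yellow vertices), so some branches need a further inflation or a larger kingdom before a contradiction appears. I would first try extending only one end, the one with fewer free choices, until every branch dies within a bounded number of sea caterpillars. Following the earlier propositions, I would present the resulting finite case tree in an appendix figure.
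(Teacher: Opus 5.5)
Your overall strategy, an exhaustive figure-style case tree of HBS-chain extensions pruned by earlier results, is the one the paper uses in Appendix D. However, the plan as written leaves out the ingredient the paper actually needs to close the tree for (j). In the paper's analysis, at least one branch is closed not by a local obstruction but because the forced continuation contains the configuration excluded in Proposition~\ref{d}. That configuration is a case-1 grafting (i) or (k) of Figure~\ref{fig:greffages marins} whose two hexagons are adjacent to a common boat that is not adjacent to exactly two other boats. Proposition~\ref{d} is not among your listed tools, and your predicted mechanism for (j) (a cycle around a hexagon or two consecutive $\frac{4\pi}{5}$ angles after the forced $PC_2$) does not anticipate it. Your catch-all ``sea grafting already eliminated'' only covers this branch if you recognise the configuration and check the boat hypothesis. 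That check is necessary because (i) and (k) are not excluded unconditionally (see Figure~\ref{fig:configuration possiblement bi-infiniment prolongeable}). Otherwise you would have to redo the whole analysis of Appendix C inside that branch.

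Two further corrections. First, drop the clause saying that branches containing (j) or (l) themselves are ``handled by the same analysis.'' Pruning with the very configurations you are trying to exclude is circular: an extension could contain such graftings repeatedly, and the argument would never terminate. Every branch must end on an independently established obstruction. Second, (l) is the mirror image of (j), exactly as (i) and (k) are in Appendix C. The paper therefore builds a single tree for (j) and obtains (l) by reflection. Your two predicted mechanisms are not mirror images of each other: one hinges on a forced $PC_2$ and a cycle or consecutive $\frac{4\pi}{5}$ angles, the other on a forced $PC_4$ followed by a forbidden cape. This shows the symmetry was missed and that you are planning twice the necessary work.
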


\begin{proof}
It is enough to examine the possible extensions of these sea graftings and observe that none of them can be bi-infinite. See Appendix D.

\end{proof}

\begin{proposition}\label{f}
The sea grafting (h) from Figure \ref{fig:greffages marins} does not belong to any bi-infinite fully leafed caterpillar.
\end{proposition}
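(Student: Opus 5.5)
The plan is the same exhaustive-extension argument used for Propositions \ref{prop:élimination de cap 2 et cap 3}, \ref{c}, \ref{d} and \ref{e}. We start from the sea grafting (h), dock 4 $\diamond$ dock 2, of Figure \ref{fig:greffages marins}. We take the union of the kingdoms of the two hexagons supporting the docks, using Figure \ref{fig:royaumes HBS}, and we enumerate every way the HBS chain can be continued at each of its two free ends, working in the star-graph. At each step the admissible continuations are restricted by the vertex neighborhoods of Figure \ref{fig:Voisinages HBS}. Every configuration produced is then tested against the results already established.

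First I would use Lemma \ref{BRB hexagone} to fix the prime caterpillar at the red vertex of the dock 2 adjacent to its blue vertex: it must be $PC_2$, which already fixes the angle $\frac{6\pi}{5}$ at that vertex. At the dock 4 end, I would list the (at most two) prime-caterpillar choices allowed by Figure \ref{fig:chenilles marines}. For each choice, Corollary \ref{prop:serpente} forces the side on which the next prime caterpillar lies. Two further restrictions prune the possibilities:
\begin{itemize}
\item Lemma \ref{prop:angle} restricts the angle to $\frac{4\pi}{5}$, $\frac{6\pi}{5}$ or $\frac{8\pi}{5}$, with $\frac{2\pi}{5}$ impossible.
\item Lemmas \ref{lem:4 pi sur 5} and \ref{prop:4 pi sur 5} rule out two consecutive $\frac{4\pi}{5}$ angles, and also a $\frac{4\pi}{5}$ angle whose grafting tiles sit on the wrong side.
\end{itemize}

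Each surviving branch is extended until the next sea caterpillar in the chain is identified. That sea caterpillar is then eliminated in one of the following ways:
\begin{itemize}
\item it is a cape 2 or cape 3 (Proposition \ref{prop:élimination de cap 2 et cap 3});
\item it is a cape built on an S1 star (Proposition \ref{c} and its corollary);
\item it contains a $PC_1$ (Proposition \ref{prop:CP1});
\item it produces one of the sea graftings (j) or (l) (Proposition \ref{e});
\item it produces (i) or (k) in the configuration excluded by Proposition \ref{d}.
\end{itemize}
Branches not caught by these results should close up on the hexagon or boat kingdoms into a cycle, as in the proof of Lemma \ref{BRB hexagone}. Such a cycle contradicts acyclicity of the caterpillar.

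The main obstacle is the branching at the dock 4 end. That is where the kingdoms stop determining the tiling, so the yellow (undetermined) vertices of Figure \ref{fig:royaumes HBS} can become several colors. I would handle this by enlarging the patch one HBS tile at a time and fixing the color of each yellow vertex as a separate case. I expect some branch to lead either to a Porrier-type continuation whose forced angle is $\frac{4\pi}{5}$ next to another $\frac{4\pi}{5}$, or to a cape on an S1 star, either of which closes it. The figures recording this case tree would form an appendix, as in Appendices A through D.
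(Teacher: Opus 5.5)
Your plan is essentially the paper's proof. The paper also argues by exhaustively enumerating the extensions of the HBS chain of sea grafting (h) from the relevant kingdoms, closing every branch with the previously established lemmas and propositions, and records this as a single case-analysis figure in Appendix E. As with the paper, all the substance lies in actually carrying out that case tree, which your proposal describes but does not yet execute; in particular, your expectations about how the dock 4 branches close are still unverified.
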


\begin{proof}
It is enough to examine the possible extensions of these sea graftings and observe that none of them can be bi-infinite. See Appendix E.

\end{proof}

The last sea grafting of Figure \ref{fig:greffages marins} that has not yet been analyzed is sea grafting (g). We think that this sea grafting can be extended into a bi-infinite fully leafed caterpillar and that proving this requires a non-trivial construction similar to the one used in Theorem \ref{thm:deuxième chenille infinie}. We also believe that in order to find this construction, it may be useful to rely on the construction from Theorem \ref{thm:deuxième chenille infinie}: the new construction to be formalized seems to be definable by a certain sort of reflection of the construction from Theorem \ref{thm:deuxième chenille infinie} (see Figure \ref{conj}). If Conjecture \ref{conj_g} is true, it would be even better if such a bi-infinite fully leafed caterpillar containing the sea grafting $(g)$ of Figure \ref{fig:greffages marins} also contains the sea grafting (k) (the only other sea grafting, up to reflection, for which we have not reached a conclusion). We would then know, among all possible sea graftings, which ones are extendable into a bi-infinite fully leafed caterpillar.

 \begin{conjecture}\label{conj_g}
 The sea grafting (g) of Figure \ref{fig:greffages marins} can be extended into a bi-infinite fully leafed caterpillar.
 \end{conjecture}

\begin{figure}[H]
    \centering
        \includegraphics[width=0.5\textwidth]{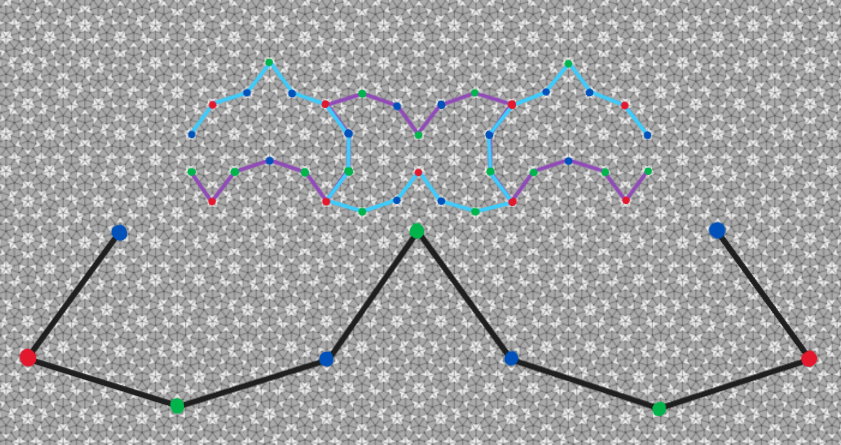}
        \caption{A fully leafed caterpillar (in light blue) that seems to follow $\psi(\text{Super cape 4})$ and contains the sea grafting (g). It seems that a certain kind of reflection of the bi-infinite fully leafed caterpillar constructed in Theorem \ref{thm:deuxième chenille infinie} could be defined to find a new bi-infinite caterpillar that contains the sea grafting (g) of Figure \ref{fig:greffages marins}.}
        \label{conj}
\end{figure}

\section{Final remarks}

In conclusion, we now have a better understanding of the fully leafed induced subtrees in the Penrose P2 tilings. We have determined their graph structure and, consequently, reduced the study of fully leafed induced subtrees of P2-graphs to that of fully leafed induced subcaterpillars of P2-graphs. We also know that every saturated fully leafed induced subtree is precisely either a prime caterpillar or a caterpillar obtained by grafting prime caterpillars.

We have also studied the bi-infinite fully leafed caterpillars in the Penrose P2 tilings. Not only does such a caterpillar exist, but it is not unique. The reader may have noticed that enumerating all of them quickly becomes a challenging problem, mainly because of the complexity of their construction.

We have presented results, particularly on the sea caterpillars and their grafting, that provide the basis for the following problem:\\

\textbf{Problem 1.} Find all bi-infinite fully leafed induced subcaterpillars in the Penrose P2 tilings.\\

We hope that by restricting the sea caterpillars and sea graftings that can belong to one of these caterpillars (as we have begun to do), it will become easier to identify the complete set of bi-infinite caterpillars. One possible direction would be to study the relationship between the bi-infinite caterpillar of Theorem \ref{thm 1} and the one (or the ones) of Theorem \ref{thm:deuxième chenille infinie}. For example, can we construct \textit{hybrid} caterpillars, whose construction would be based on the inflation rules of each of these two constructions?

More generally, in order to construct all bi-infinite fully leafed caterpillars, one should investigate whether there exists a construction that generalizes the two bi-infinite fully leafed caterpillars that we have presented and that is sufficiently general to generate all bi-infinite fully leafed caterpillars.

We then propose the following problem, which is an extension  of Problem 1:\\

\textbf{Problem 2.} Define the formal language that characterizes the set of bi-infinite fully leafed caterpillars of P2-graphs and study the properties of the words in this language.\\

We initiated this problem following Theorem \ref{thm 1}, where we suggested describing the caterpillar from that construction by a bi-infinite word. It would then be interesting to do the same for all bi-infinite fully leafed caterpillars of P2-graphs, and to study the relationships among these words as well as their combinatorial properties. These words are probably all aperiodic. The Fibonacci word and the golden ratio are known to play central roles in Penrose tilings. In this sense, it would be interesting to see whether the words describing the bi-infinite fully leafed caterpillars are related to the Fibonacci word and the golden ratio.\\

Of course, studying the fully leafed induced subtrees in other aperiodic tilings, such as the Penrose P1 and P3 tilings, the HBS tilings or the Einstein tiling\footnote{In 2023, a first aperiodic monotile tiling was discovered, which is called the \textit{Einstein} tiling. For further details, see \cite{smith2023aperiodic}.}, would also be of great interest. It would then be natural to compare these fully leafed induced subtrees with those of the Penrose P2 tilings.\\

\textbf{Acknowledgements:} We would like to thank Kevin Bertman, the developer of the website \url{https://www.mrbertman.com/penroseTilings.html}. This website enabled us to construct tile patches of Penrose P2 tilings.

\newpage

\section*{Appendix A: Proof of Proposition \ref{prop:élimination de cap 2 et cap 3}}

It suffices to treat cape 2 sea caterpillars. The same holds for cape 3 sea caterpillars by symmetry.

The following figures show all attempts to extend a cape 2 into a bi-infinite fully leafed caterpillar. Two cases are considered: a cape 2 on a S1 star (Case 1) and a cape 2 on a S2 star (Case 2). In each case, the initial patch corresponds to the kingdom of the relevant HBS tiles.

\begin{figure}[H]
    \centering
    \includegraphics[width=0.7\textwidth, trim=0cm 5cm 0cm 0cm, clip]{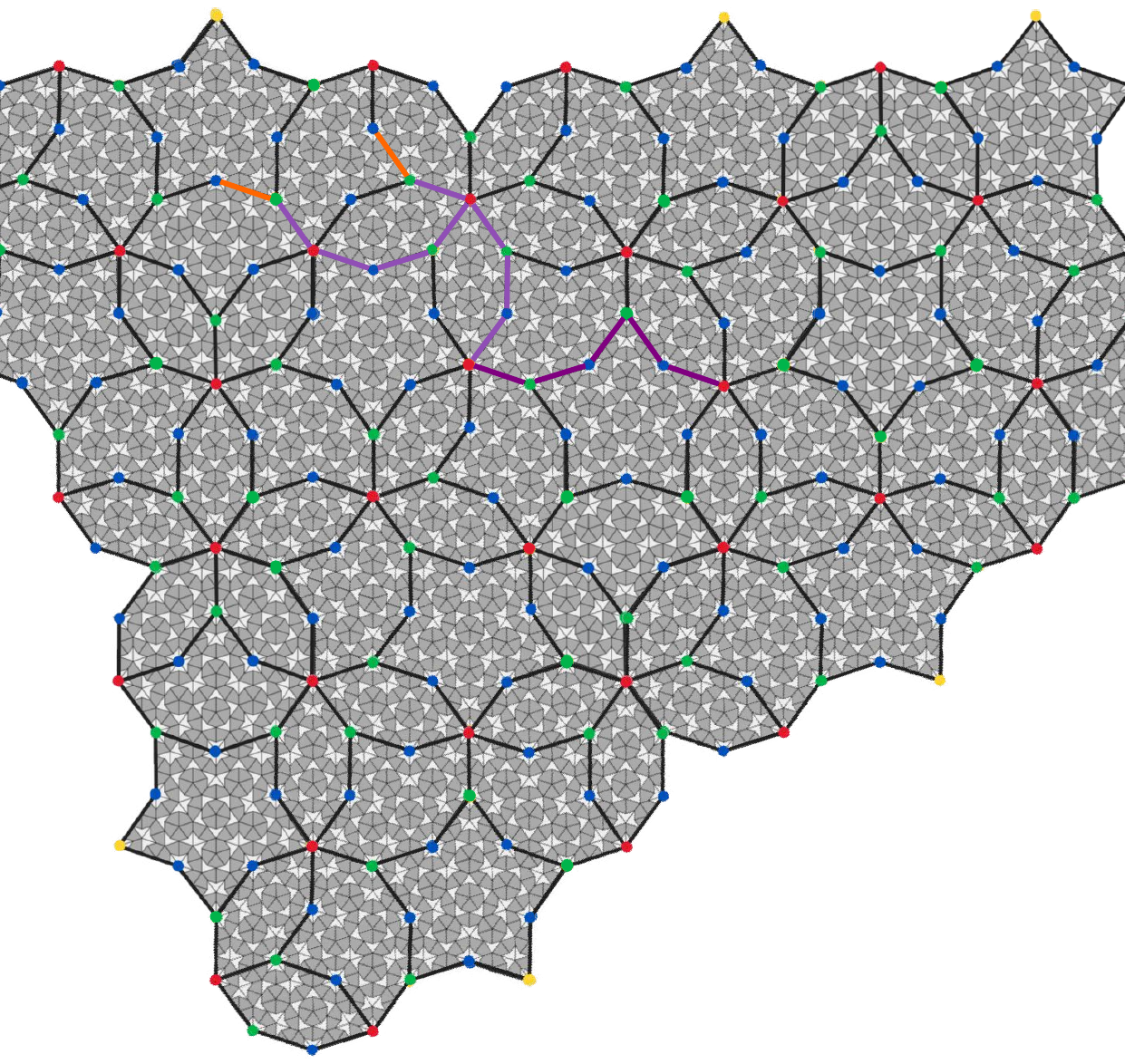}
    \caption{Case 1 — cape 2 on a S1 star. In dark purple, there is the HBS chain of a cape 2. In light purple, there is all possible extensions. Orange edges mark dead ends where previous lemmas or propositions fail. If all extensions end in orange, a cape 2 cannot be extended into a bi-infinite fully leafed caterpillar.}
    \label{Étude cap 2 cas 1}
\end{figure}

\nopagebreak
\begin{figure}[H]
    \centering
    \includegraphics[width=1.2\textwidth, trim=9cm 11cm 0cm 0cm, clip]{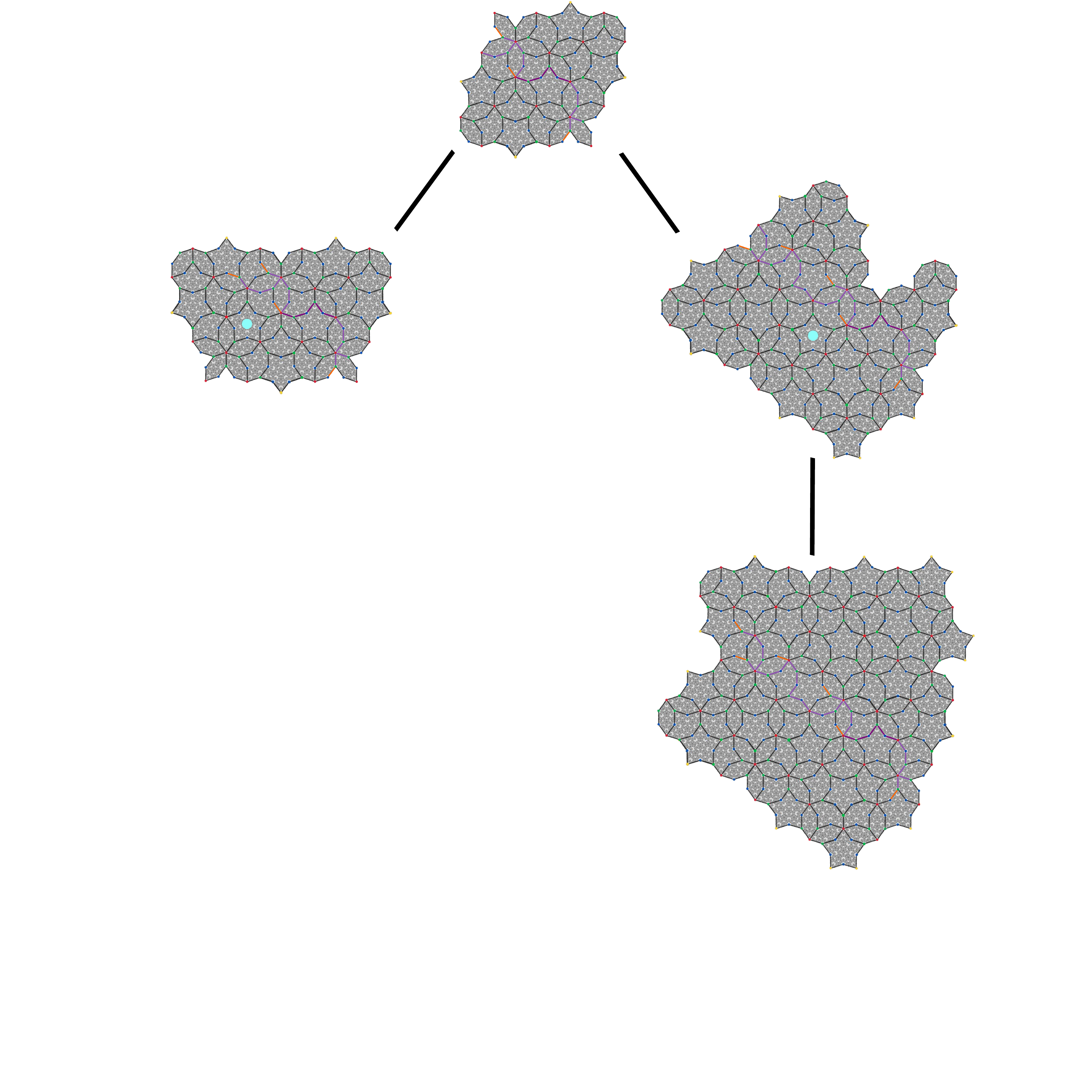}
    \caption{Case 2 — cape 2 on a S2 star. The kingdom of a S2 star is too small to conclude directly. For a yellow-centered vertex, subcases are distinguished (we try to fix the yellow vertex in red or in green), and the patch is enlarged step by step by adding forced tiles. A large light-blue circle marks the star where the subcase distinction occurs. Each step is shown below the previous one. Enlargement stops once the patch is large enough to conclude. Remaining yellow vertices do not affect the proof.}
    \label{Étude cap 2 cas 2}
\end{figure}

\newpage
\section*{Appendix B: Proof of Proposition \ref{c}}
Figure~\ref{fig:cap 4 cas pas prolongeable} demonstrates the proposition.
\begin{figure}[H]
\centering
\includegraphics[width=1\textwidth]{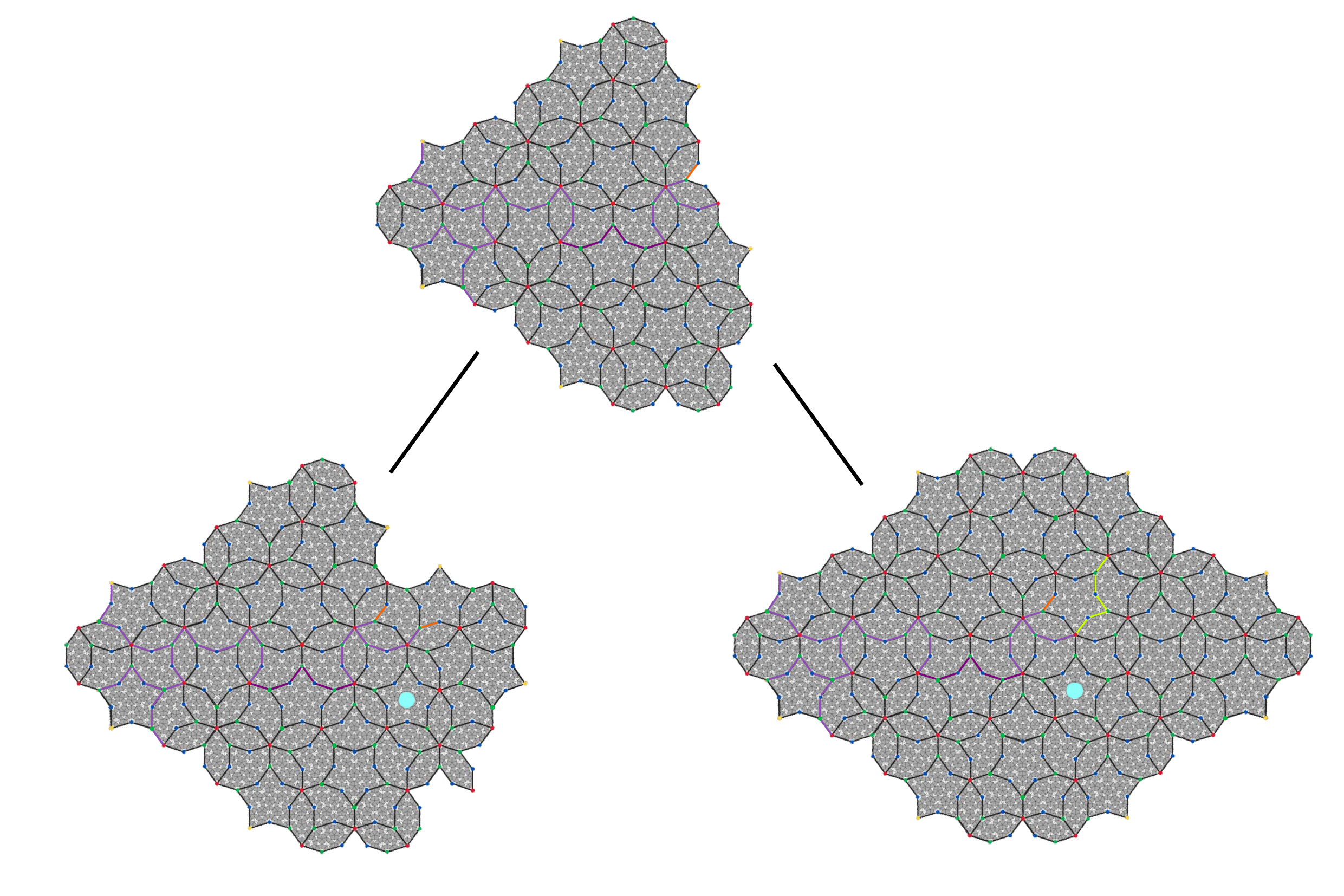}
\caption{Attempts to extend a cape 4 into a fully leafed bi-infinite caterpillar when the cape 4 is built on a type 1 star. In bright yellow-green, we indicate a sea caterpillar previously shown to be non-extendable into a fully leafed bi-infinite caterpillar.}
\label{fig:cap 4 cas pas prolongeable}
\end{figure}
\newpage
\section*{Appendix C: Proof of Proposition \ref{d}}
We consider only the sea grafting (k). By reflection, the same conclusion holds for sea grafting (i). All cases are covered using the same method as in the previous subsection.

\begin{figure}[H]
\centering
\includegraphics[width=0.9\textwidth, trim=0cm 0cm 0cm 0cm, clip]{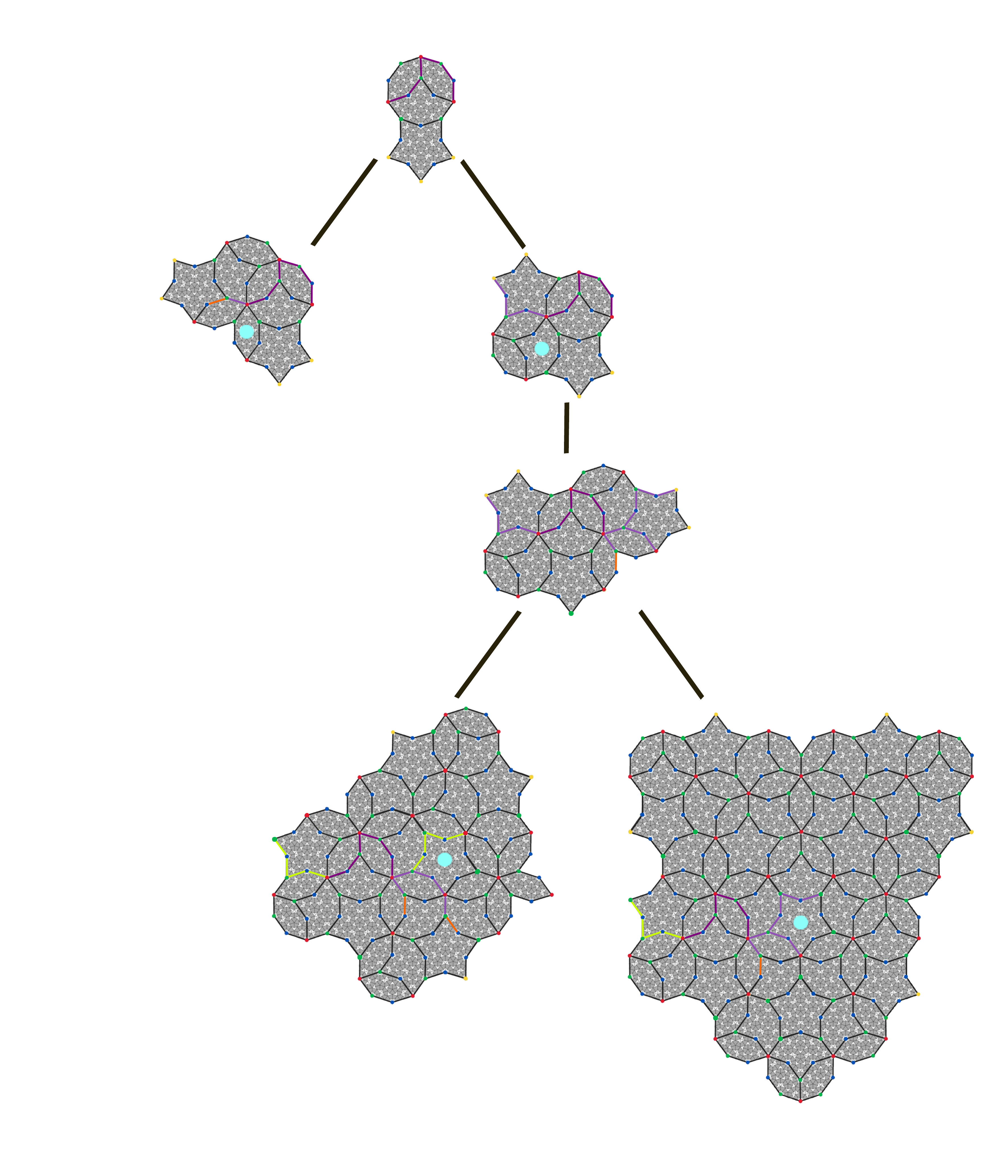 }
\caption{Study of possible extensions of sea grafting (k) (case 1)}
\end{figure}

\begin{figure}[H]
\centering
\includegraphics[width=1\textwidth, trim=0cm 19cm 0cm 0cm, clip]{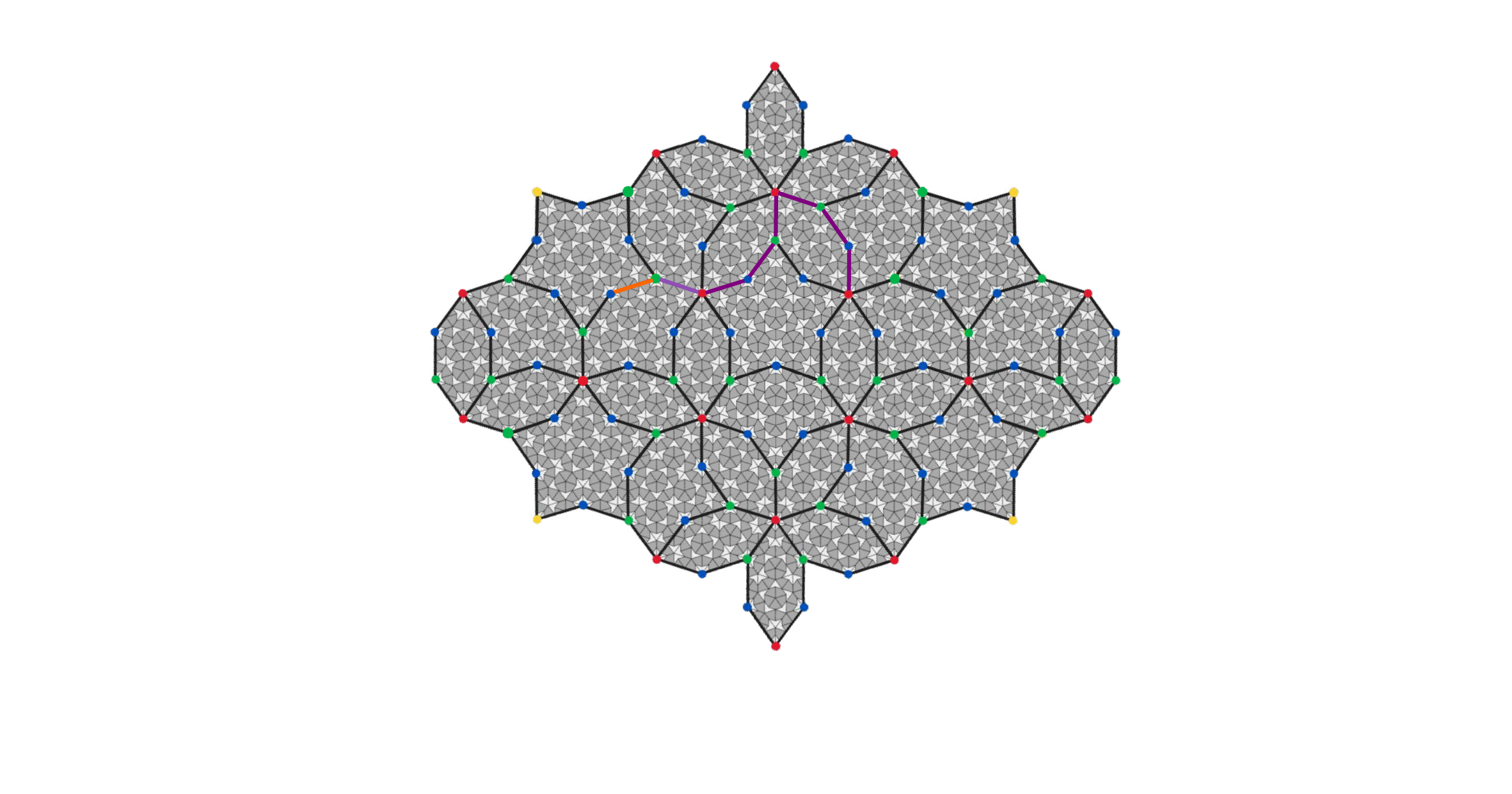 }
\caption{Study of possible extensions of sea grafting (k) (case 2)}
\end{figure}

The proposition is thus verified for sea grafting (k).

\newpage

\section*{Appendix D: Proof of Proposition \ref{e}}
The following figure shows the elimination of sea grafting (j) as a possible subcaterpillar of a bi-infinite fully leafed caterpillar.

\begin{figure}[H]
\centering
\includegraphics[width=0.7\textwidth, trim=0cm 5cm 0cm 0cm, clip]{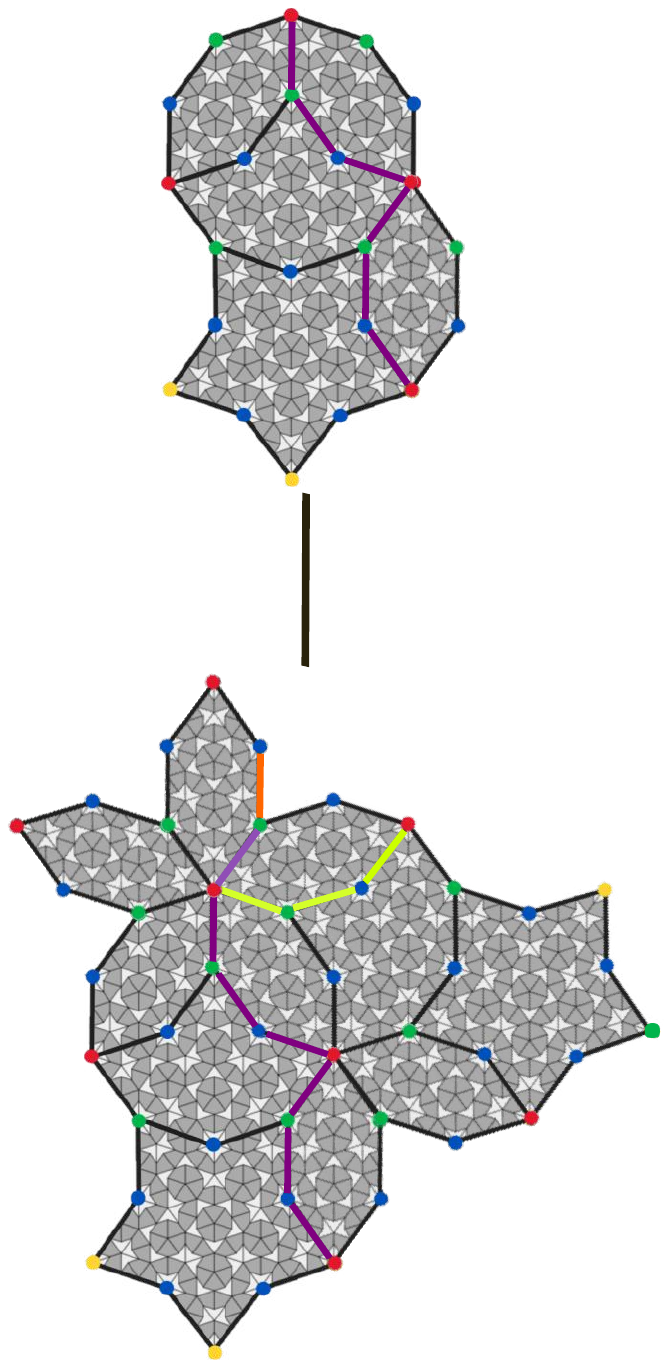}
\caption{Study of sea grafting (j). We kept sea grafting (j) in dark purple on the lower patch. The light-yellow-green chain is then incomplete: the forbidden chain actually corresponds to the sea grafting of the previous proposition.}
\end{figure}

By reflection, we can also conclude that sea grafting (l) does not belong to any bi-infinite fully leafed caterpillar.

\newpage
\section*{Appendix E: Proof of Proposition \ref{f}}
The following figure completes the proof.
\begin{figure}[H]
\centering
\includegraphics[width=1\textwidth, trim=0cm 13cm 0cm 1cm, clip]{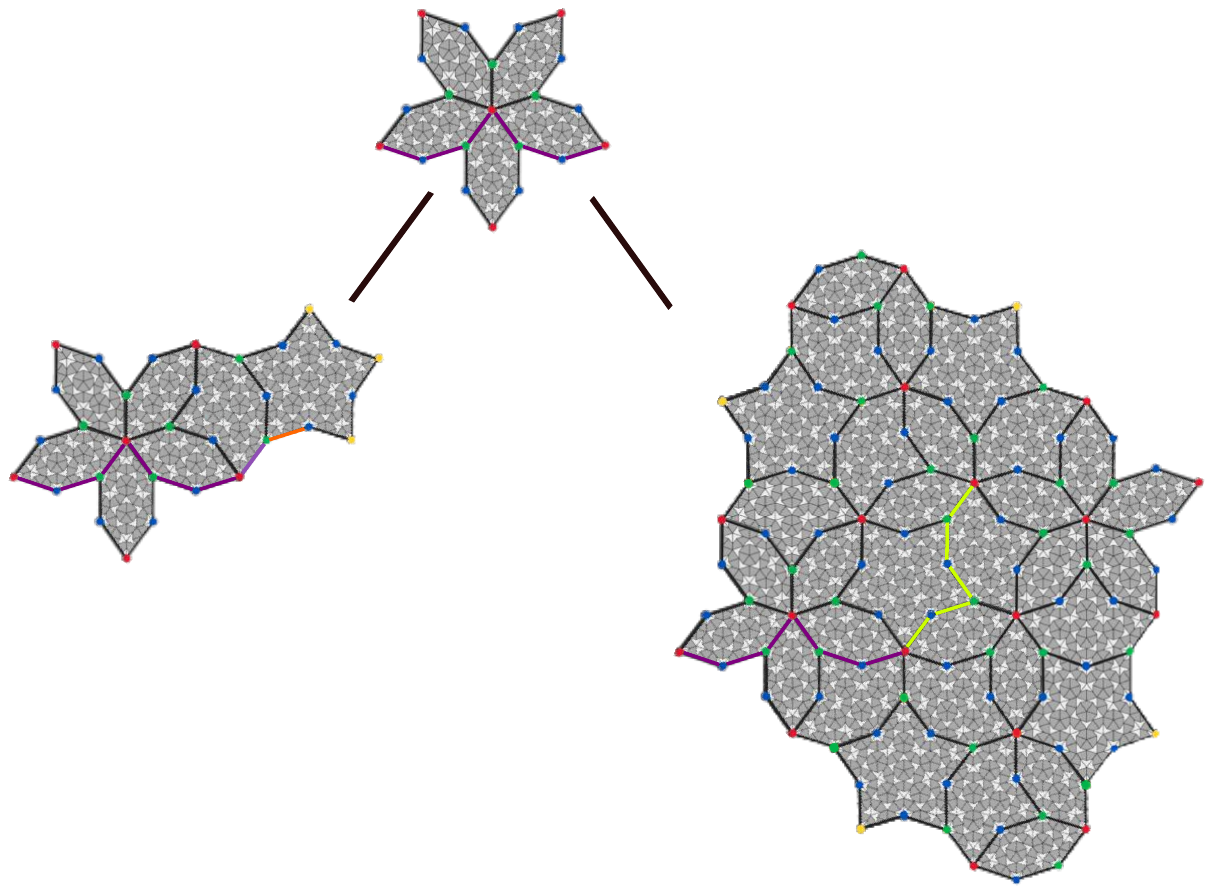}
\caption{Study of sea grafting (h)}
\end{figure}

\newpage
\bibliographystyle{alpha}
\bibliography{Bibliographie}

\end{document}